\let\oldtocsection=\tocsection
\let\oldtocsubsection=\tocsubsection
\let\oldtocsubsubsection=\tocsubsubsection
\renewcommand{\tocsection}[2]{\hspace{0em}\oldtocsection{#1}{#2}}
\renewcommand{\tocsubsection}[2]{\hspace{1em}\oldtocsubsection{#1}{#2}}
\renewcommand{\tocsubsubsection}[2]{\hspace{2em}\oldtocsubsubsection{#1}{#2}}
\newtheorem{theorem}{Theorem}[section]
\newtheorem{lemma}[theorem]{Lemma}
\newtheorem{corollary}[theorem]{Corollary}
\newtheorem{proposition}[theorem]{Proposition}
\theoremstyle{definition}
\newtheorem{definition}[theorem]{Definition}
\newtheorem{example}[theorem]{Example}
\newtheorem{remark}[theorem]{Remark}
\numberwithin{equation}{section}
\newcommand{\Z}{\mathbb{Z}}
\newcommand{\B}{\mathcal{B}}
\newcommand{\D}{\mathcal{D}}
\newcommand{\N}{\mathbb{N}}
\newcommand{\ffi}{\varphi}
\newcommand{\eps}{\varepsilon}
\newcommand{\im}{\operatorname{Im}}
\newcommand{\coker}{\operatorname{coker}}
\newcommand{\SL}{\operatorname{SL}}
\newcommand{\Tor}{\operatorname{Tor}}
\newcommand{\Sp}{\operatorname{Sp}}
\newcommand{\GL}{\operatorname{GL}}
\newcommand{\Skew}{\operatorname{Skew}}
\newcommand{\Pf}{\operatorname{Pf}}
\newcommand{\ann}{\operatorname{Ann}}
\renewcommand{\omit}{\text{\tiny ${\wedge}$}}
\title[Homology of symplectic groups]{On the Homology stability range for symplectic groups}
 \author{Marco Schlichting}
 \address{Marco Schlichting, Mathematics Institute,
Zeeman Building,
University of Warwick,
Coventry CV4 7AL, UK} 
\thanks{}
\email{m.schlichting@warwick.ac.uk}
\subjclass{}
\keywords{}
\begin{document}
\bibliographystyle{alpha}

\begin{abstract}
We improve, by a factor of $2$, known homology stability ranges for the integral homology of symplectic groups over commutative local rings with infinite residue field and show that the obstruction to further stability is bounded below by Milnor-Witt $K$-theory.
In particular our stability range is optimal in many cases.
 \end{abstract}

\maketitle

\tableofcontents

\section{Introduction}

This paper addresses the question of optimal homology stability for symplectic groups over local rings.
Recall that the symplectic group $\Sp_{2n}(R)$ over a commutative ring $R$ is the group of $R$-linear automorphisms $A$ of $R^{2n}$ that preserve the standard symplectic inner product, that is, $\langle Ax,Ay\rangle = \langle x, y\rangle$ for all $x=(x_1,x_2,...,x_{2n}),\ y=(y_1,y_2,...,y_{2n})\in R^{2n}$ where $\langle x, y \rangle =  \sum_{i=1}^n (x_{2i+1}y_{2i+2} - x_{2i+2}y_{2i+1})$.
We consider $\Sp_{2n}(R)$ as a subgroup of $\Sp_{2n+2}(R)$ by means of the embedding $A \mapsto \left(\begin{smallmatrix} 1_{R^2} & 0 \\ 0 & A\end{smallmatrix}\right)$.
The following is part of Theorem \ref{thm:OptStabilityText} in the text.
All homology groups in this paper are with integer coefficients unless indicated otherwise.

\begin{theorem}
\label{thm:OptStabilityIntro}
Let $R$ be a commutative local ring with infinite residue field and $n\geq 1$ an integer.
Then the relative integral homology groups satisfy
\begin{equation}
\label{eqn:StabMap}
H_d(\Sp_{2n}(R), \Sp_{2n-2}(R)) =0,\hspace{5ex}d<2n.
\end{equation}
\end{theorem}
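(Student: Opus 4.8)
The plan is to prove the stronger relative vanishing $H_d(\Sp_{2n}(R),\Sp_{2n-2}(R))=0$ for $d<2n$ by induction on $n$, using a highly connected $\Sp_{2n}(R)$-complex together with its associated isotropy (equivariant homology) spectral sequence, the complex being chosen so as to produce the factor-of-two improvement. The base case $n=1$ reduces to $H_1(\SL_2(R))=0$ (the degree-zero part being automatic, since $H_0$ of a pair of groups always vanishes), and this holds because $\SL_2(R)=E_2(R)$ is perfect over a local ring with infinite residue field.

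The heart of the inductive step is to choose the right complex $X_n$ with $\Sp_{2n}(R)$-action, compatibly stabilized by a map $X_{n-1}\to X_n$, so that two properties hold: first, $X_n$ is highly connected, of connectivity roughly $2n-2$, with $X_{n-1}\to X_n$ connected one range further -- here the hypothesis that $R$ is local with infinite residue field is essential, supplying the general-position input needed for the \emph{optimal} bound rather than a naive stable-rank estimate; second, the stabilizers of the simplices of $X_n$ have, up to homology, the form of smaller symplectic groups $\Sp_{2m}(R)$ with $m$ decreasing as slowly as possible in the simplex dimension. The natural candidate is (a subcomplex of) van der Kallen's complex of unimodular sequences $(v_0,\dots,v_p)$ in $R^{2n}$, and the source of the factor of $2$ is the following: since the symplectic form is alternating, a single unimodular vector is automatically isotropic and its stabilizer already has the homology of $\Sp_{2n-2}(R)$, while a generic pair of vectors has unit inner product, spans a whole hyperbolic plane, and \emph{again} has a stabilizer with the homology of $\Sp_{2n-2}(R)$; thus the symplectic rank of the stabilizer of a generic $p$-simplex drops by $2$ only for every two simplex directions, $m=n-\lceil (p+1)/2\rceil$, i.e.\ at half the rate of the analogous general-linear complex. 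One must also keep the stabilizers of the more degenerate (more isotropic) simplices under control -- either by cutting them out of $X_n$, or by invoking the already-known optimal homology stability for $\GL$ over such rings to absorb the general-linear factors that appear in those stabilizers -- and striking this balance while retaining the connectivity is a substantial part of the work. The stabilizer homology computations themselves use the infinite residue field via the standard argument that units of $R$ act on the relevant unipotent radicals so as to make them invisible to homology.

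I would then run the isotropy spectral sequence relative to $X_{n-1}\to X_n$: it has $E^1_{-1,q}=H_q(\Sp_{2n}(R),\Sp_{2n-2}(R))$, has $E^1_{p,q}$ for $p\geq 0$ built from the relative homology of the simplex stabilizers -- the generic contribution being $H_q(\Sp_{2m}(R),\Sp_{2m-2}(R))$ with $m=n-\lceil (p+1)/2\rceil$ -- and converges to $0$ in total degrees $\leq 2n-1$ by the connectivity property. The inductive hypothesis kills the $E^1$-terms for $q<2m$, and a direct chase then yields $H_d(\Sp_{2n}(R),\Sp_{2n-2}(R))=0$ for $d$ a couple of degrees below $2n$. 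The main obstacle is to recover the last degrees: in the top total degrees the relevant $E^1$-terms are exactly the first unstable homology groups $H_{2m}(\Sp_{2m}(R),\Sp_{2m-2}(R))$ of the smaller symplectic groups, which the inductive vanishing statement does not control, so one must instead identify the low differentials at the edge of the spectral sequence and prove that the relevant stretch of the $E^1$-complex is exact. I expect this to force the induction to be carried out simultaneously with a computation of that first unstable group in terms of Milnor--Witt $K$-theory -- the obstruction to further stability referred to in the abstract -- the vanishing statement and the $K$-theoretic computation being genuinely interlocked.
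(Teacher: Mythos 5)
Your general framework---an acyclic complex with a symplectic group action, a spectral sequence, and a stability range coming from the connectivity of the complex---is indeed the broad strategy of the paper, and you correctly diagnose the central obstacle: at the edge of the spectral sequence the $E^1$-terms involve the \emph{unstable} homology groups of the smaller symplectic groups, so a naive induction on $n$ cannot close up. However, your proposed escape route---carrying out the induction simultaneously with a computation of the first unstable group $H_{2m}(\Sp_{2m},\Sp_{2m-2})$ in terms of Milnor--Witt $K$-theory---would fail. The paper only proves a \emph{surjection} $H_{2n}(\Sp_{2n},\Sp_{2n-2})\twoheadrightarrow K^{MW}_{2n}(R)$ (Theorem \ref{thm:ObstructionSurjection}), and Remark \ref{rmk:NotInjective} shows this surjection is not injective for many infinite fields already at $n=2$; so the unstable group is genuinely \emph{not} equal to $K^{MW}_{2n}(R)$, and any argument that presupposed an isomorphism would be trying to prove something false.

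The paper's actual resolution is different and not inductive on $n$. It works with the complex $\Z[U_*(R^{2n})]$ of \emph{non-degenerate} unimodular sequences (Gram matrix with all even minors invertible), whose $\Sp_{2n}(R)$-orbit stabilizers are the ``odd'' symplectic groups $\Sp_{2n-q}(R)$---the rank drops by one in each degree, not by two every other degree as in your picture---so the $E^1$-page (Corollary \ref{cor:E1IdentWellDef}) is $H_p(\Sp_{2n-q})\otimes\Z[\Skew^+_q(R)]$. The key new input is the Limit Theorem (Theorem \ref{thm:LimitThm}), a vanishing criterion for ``quasi-linear'' $\Z_0[R]$-modules such as $H_p(\Sp_{2r+1},\Sp_{2r})$ (Lemma \ref{lem:tildtoIszero}), which is used to prove that the spectral sequence degenerates at $E^2$ (Proposition \ref{prop:Esurjective}) without ever needing to compute the unstable edge groups. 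Once degeneration is established, the stability range falls out of the acyclicity of $\Z[U_*(R^{2n})]$ and $\Z[\Skew^+_*(R)]$ in a single step. So while you have identified the right obstruction, the mechanism that overcomes it in the paper is of an entirely different nature from what you propose.
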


In particular, for all integers $n \geq 0$ inclusion of groups induces isomorphisms
\begin{equation}
\label{eqn:H2nStability}
H_{2n}(\Sp_{2n}R) \stackrel{\cong}{\longrightarrow}  H_{2n}(\Sp_{2n+2}R) \stackrel{\cong}{\longrightarrow} H_{2n}(\Sp_{2n+4}R) \stackrel{\cong}{\longrightarrow} \cdots
\end{equation}
and a surjection followed by isomorphisms
\begin{equation}
\label{eqn:H2n+1Stability}
H_{2n+1}(\Sp_{2n}R) \twoheadrightarrow  H_{2n+1}(\Sp_{2n+2}R) \stackrel{\cong}{\longrightarrow} H_{2n+1}(\Sp_{2n+4}R) \stackrel{\cong}{\longrightarrow} \cdots.
\end{equation}

For $n=1$, the isomorphisms (\ref{eqn:H2nStability}) where proved by van der Kallen \cite{vdKallen}
generalizing the results of Matsumoto \cite{Matsumoto} for infinite fields.
In joint work with Sarwar \cite{SarwarMe}, we proved (\ref{eqn:H2n+1Stability}) for $n=1$.
Mirzaii \cite{Mirzaii} proves that the relative homology groups in (\ref{eqn:StabMap}) vanish for $d<n-1$.
For infinite fields,  Essert  \cite{Essert} and Sprehn-Wahl \cite{SprehnWahl} prove the vanishing of that group for $d< n$. 
Thus, our result improves the best known stability ranges by a factor of two.
\vspace{2ex}

For a commutative local ring $R$ with infinite residue field, consider the graded $\Z[R^*]$-algebra generated in degree $1$ by the augmentation ideal $I[R^*]\subset \Z[R^*]$ modulo the Steinberg relation $[a]\otimes[1-a]$ for $a,1-a\in R^*$.
For $n\geq 2$, the $n$-th degree part of that algebra is the $n$-th Milnor-Witt $K$-group $K^{MW}_n(R)$ of $R$ \cite[\S 4]{myEuler} which was first defined in \cite{Morel:book} for fields where it
plays an important role in $\mathbb{A}^1$-homotopy theory.
The following is Theorem \ref{thm:ObstructionSurjection} in the text.

\begin{theorem}
\label{thm:OptStabilityIntro2}
Let $R$ be a commutative local ring with infinite residue field and $n\geq 1$ an integer.
Then the inclusion $\Sp_{2n}(R) \subset \SL_{2n}(R)$ induces a surjection
$$H_{2n}(\Sp_{2n}R, \Sp_{2n-2}R) \twoheadrightarrow H_{2n}(\SL_{2n}R, \SL_{2n-1}R) \cong K^{MW}_{2n}(R).$$
\end{theorem}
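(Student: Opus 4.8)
The plan is to prove the stronger assertion that the composite
\[
H_{2n}(\Sp_{2n}R)\longrightarrow H_{2n}(\SL_{2n}R)\longrightarrow H_{2n}(\SL_{2n}R,\SL_{2n-1}R)\cong K^{MW}_{2n}(R)
\]
-- induced by $\Sp_{2n}(R)\subset\SL_{2n}(R)$, then the projection to relative homology, then the isomorphism of the statement -- is surjective. This is enough: with the standard conventions $\Sp_{2n-2}(R)=\diag(1_{R^2},-)$ is contained in $\SL_{2n-1}(R)$ compatibly with the inclusions into $\SL_{2n}(R)$, so functoriality of relative homology produces a commuting square through which the above composite factors as $H_{2n}(\Sp_{2n}R)\to H_{2n}(\Sp_{2n}R,\Sp_{2n-2}R)\to H_{2n}(\SL_{2n}R,\SL_{2n-1}R)\cong K^{MW}_{2n}(R)$, the middle arrow being the map of the statement; surjectivity of the composite then forces that arrow to be surjective.

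To produce enough classes in $H_{2n}(\Sp_{2n}R)$ I would use products of copies of $\Sp_2$. Decomposing the symplectic module $R^{2n}=L_1\perp\cdots\perp L_n$ into hyperbolic planes, the commuting subgroups $\Sp(L_i)\cong\Sp_2(R)$ give a block embedding $\Sp_2(R)^{\times n}\hookrightarrow\Sp_{2n}(R)$ whose composite with $\Sp_{2n}(R)\subset\SL_{2n}(R)$ is, because $\Sp_2=\SL_2$, the usual block-diagonal $\SL_2(R)^{\times n}\hookrightarrow\SL_{2n}(R)$. Hence the associated $n$-fold external (Pontryagin) product fits into a commutative square making the composite $H_2(\Sp_2R)^{\otimes n}\to H_{2n}(\Sp_{2n}R)\to K^{MW}_{2n}(R)$ equal to $H_2(\SL_2R)^{\otimes n}\to H_{2n}(\SL_{2n}R)\to K^{MW}_{2n}(R)$, again with a block product on the left. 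Now three facts combine: (i) $K^{MW}_{2n}(R)$ is generated as an abelian group by the products $[u_1]\cdots[u_{2n}]=\bigl([u_1][u_2]\bigr)\cdots\bigl([u_{2n-1}][u_{2n}]\bigr)$ of elements of $K^{MW}_2(R)$, which is immediate from the presentation of the Milnor--Witt algebra recalled in the introduction; (ii) $H_2(\Sp_2R)=H_2(\SL_2R)\cong K^{MW}_2(R)$, the case $m=1$ of the isomorphism $H_{2m}(\SL_{2m}R,\SL_{2m-1}R)\cong K^{MW}_{2m}(R)$ together with $\SL_1(R)=1$ and $\Sp_2(R)=\SL_2(R)$; and (iii) under the identifications in (ii) and in the statement, the composite $K^{MW}_2(R)^{\otimes n}\to H_{2n}(\SL_{2n}R)\to K^{MW}_{2n}(R)$ is, up to a unit, the $n$-fold multiplication of Milnor--Witt $K$-theory. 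Granting these, the composite in (iii) is surjective (by (i) the $n$-fold multiplication is onto), hence so is $H_{2n}(\Sp_{2n}R)\to K^{MW}_{2n}(R)$, which by the first paragraph is exactly the desired surjection.

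The crux is fact (iii), the compatibility of the comparison isomorphism $H_{2m}(\SL_{2m}R,\SL_{2m-1}R)\cong K^{MW}_{2m}(R)$ with block Pontryagin products; the remaining ingredients are formal or already recorded. By associativity of both products it is enough to treat multiplication by a single degree-two class, i.e.\ the composite $H_2(\SL_2R)\otimes H_{2m-2}(\SL_{2m-2}R)\to H_{2m}(\SL_{2m}R)\to K^{MW}_{2m}(R)$. That this composite factors through $H_2(\SL_2R)\otimes H_{2m-2}(\SL_{2m-2}R,\SL_{2m-3}R)=K^{MW}_2(R)\otimes K^{MW}_{2m-2}(R)$ is elementary: a product of a class in $H_2(\SL_2R)$ with one pulled back from $\SL_{2m-3}(R)$ already lies in the image of $H_{2m}(\SL_{2m-1}R)$, which dies in the relative group; the content is to identify the induced map with the Milnor--Witt product. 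I expect this to follow from the computation of $H_{2m}(\SL_{2m}R,\SL_{2m-1}R)$ in \cite{myEuler}, whose construction of the isomorphism is essentially of this multiplicative nature; failing that it can be checked on explicit toral (Steinberg symbol) cycles carried through the stabilization maps, which is the Nesterenko--Suslin argument adapted to $\SL$ and to Milnor--Witt $K$-theory.
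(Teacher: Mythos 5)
Your proposal is correct and follows essentially the same route as the paper. The paper's proof considers the composite $H_2(\Sp_2(R))^{\otimes n} \to H_{2n}(\Sp_{2n}(R)) \to H_{2n}(\Sp_{2n}(R),\Sp_{2n-2}(R)) \to H_{2n}(\SL_{2n}(R),\SL_{2n-1}(R))$ with the first map the block sum, and cites \cite[Theorem 5.37 and its proof]{myEuler} for the crucial fact that this composite is precisely the surjective multiplication $K_2^{MW}(R)^{\otimes n}\twoheadrightarrow K_{2n}^{MW}(R)$; your fact (iii), which you correctly flag as the crux and correctly locate in \cite{myEuler}, is exactly this citation.
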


In particular, the homology stability range in Theorem \ref{thm:OptStabilityIntro} is optimal as soon as the Milnor-Witt $K$-theory group $K^{MW}_{2n}(R)$ is non-trivial.
This happens, for instance, when the residue field of $R$ has a real embedding.
For many infinite fields, the surjection $H_{4}(\Sp_{4}R, \Sp_{2}R) \twoheadrightarrow  K^{MW}_{4}(R)$ is not injective; see Remark \ref{rmk:NotInjective}.
\vspace{1ex}

The strategy for proving our homology stability  range is classical.
We construct a highly connected chain complex on which our groups act and study the resulting spectral sequences.
The chain complex we use is essentially that of \cite{SarwarMe}.
In {\it loc.\hspace{-1ex} cit.}\hspace{-.5ex} we were not able to prove degeneration of the spectral sequence.
This is what is achieved here.
Our innovation is the Limit Theorem \ref{thm:LimitThm} which gives a criterion for the vanishing of certain modules built out of relative homology groups that carry an action of the multiplicative monoid $(R,\cdot,1)$ of a ring $R$ and may be useful for groups other than $\Sp_{2n}(R)$; see the examples in Section \ref{sec:LimitThm}.

\section{Non-degenerate unimodular sequences}

In this section we review notation and a few results from \cite{SarwarMe}.
\vspace{1ex}

Throughout this paper, $n\geq 0$ will be an integer, $R$ will be a commutative local ring with infinite residue field, $R^*$ its group of units, $\GL_n(R)$ the group of invertible $n\times n$ matrices with entries in $R$, 
$$\psi_{2n} = \psi_2 \perp \cdots \perp \psi_2 = \left(\begin{smallmatrix}\psi_2 &&&\\ & \psi_2 && \\ && \ddots & \\ &&& \psi_2\end{smallmatrix}\right) = \bigoplus_1^n\psi_2,\hspace{3ex} \psi_2 = \left(\begin{smallmatrix}0 & 1 \\ -1 & 0 \end{smallmatrix}\right)$$
the standard hyperbolic symplectic form of rank $2n$, 
$$\Sp_{2n}(R) =\{ A \in \GL_{2n}(R)|\ {^t\!A}\, \psi_{2n}\, A = \psi_{2n}\}$$
the symplectic group or rank $2n$, considered as a subgroup of $\Sp_{2n+2}(R)$ by means of the embedding 
\begin{equation}
\label{eqn:GSpIncls}
\Sp_{2n}(R) \subset \Sp_{2n+2}(R): A \mapsto \left(\begin{smallmatrix}1 & 0 & 0 \\ 0 & 1 & 0 \\ 0 & 0 & A\end{smallmatrix}\right).
\end{equation}
For the purpose of this paper, the {\em symplectic group of rank $2n+1$} is the subgroup
$$\Sp_{2n+1}(R) = \{A \in \Sp_{2n+2}(R)|\ Ae_1 = e_1\}$$
of $\Sp_{2n+2}(R)$ fixing the first standard basis vector $e_1$.
This is the group of matrices
\begin{equation}
\label{eqn:GSpodd}
\left(\begin{smallmatrix}1& c & {^t\!u}\psi M  \\ 0 & 1 & 0 \\  0 & u & M \end{smallmatrix}\right)
\end{equation}
where $\psi = \psi_{2n}$, $M\in \Sp_{2n}(R)$, $u\in R^{2n}$, $c\in R$.
The inclusions (\ref{eqn:GSpIncls}) refine to 
the sequence of inclusions of groups
\begin{equation}
\label{eqn:SpInclusions}
1=\Sp_0(R) \subset \Sp_1(R) \subset \Sp_2(R) \subset \dots \subset \Sp_n(R) \subset \Sp_{n+1}(R)\subset \dots
\end{equation}
where
$$
\Sp_{2n}(R) \subset \Sp_{2n+1}(R): M \mapsto \left(\begin{smallmatrix}1 & 0 & 0 \\ 0 & 1 & 0 \\ 0& 0 & M \end{smallmatrix}\right), \hspace{3ex} \Sp_{2n-1} (R)\subset \Sp_{2n}(R): M \mapsto M.
$$
In Theorem \ref{thm:OptStabilityText} below we study homology stability for the sequence of groups (\ref{eqn:SpInclusions}).
We shall denote the inclusions $\Sp_{r}(R) \subset \Sp_s(R)$ by $\eps^s_r$, or simply by $\eps$ if source and target group are understood, $r\leq s$.
Small rank symplectic groups are as follows
$$\Sp_0(R) = \{1\},\hspace{2ex} \Sp_1(R) =\left \{\left(\begin{smallmatrix}1 & c \\ 0 & 1\end{smallmatrix}\right)|\  c\in R\right\},\hspace{2ex}\Sp_2(R) = \SL_2(R).$$

Let $0\leq q$ be an integer. 
We denote by $\Skew_q(R)$ the set of $q\times q$ {\em skew symmetric} matrices with entries in $R$, that is those matrices $A=(a_{ij})$ such that $a_{ij}=-a_{ji}$, $a_{ii}=0$, $a_{ij}\in R$, $1 \leq i,j \leq q$.
We denote by 
$$\Skew_q^+(R) \subset \Skew_q(R)$$
 the subset of {\em non-degenerate skew-symmetric matrices}, that is those matrices $A\in \Skew_q(R)$ such that for all subsets $I\subset \{1,...,q\}$ of even cardinality the matrix $A_I$, obtained from $A$ deleting all rows and columns not in $I$, is invertible.

The $R$-module $R^{2n}$ will aways be equipped with the standard symplectic bilinear form 
$\langle x, y \rangle =  \sum_{i=1}^n (x_{2i+1}y_{2i+2} - x_{2i+2}y_{2i+1})$ where $x={^t}(x_1,x_2,...,x_{2n})$, $y={^t}(y_1,y_2,...,y_{2n}) \in R^{2n}$.
The {\em Gram matrix} $\Gamma(v)$ of a sequence $v=(v_1,...,v_q)$ of $q$ vectors $v_1,...,v_q \in R^{2n}$ is the skew symmetric $q\times q$ matrix 
$$\Gamma(v)=(\langle v_i,v_j\rangle )^q_{i,j=1} = {^tv}\ \psi_{2n}\ v$$
with $(i,j)$ entry $\langle v_i,v_j\rangle$.
A sequence $v=(v_1,...,v_q)$ of $q$ vectors in $R^{2n}$ is called
 {\em unimodular} if each subsequence of length $r \leq \min(q,2n)$ 
is a basis of a direct summand of $R^{2n}$. 
A unimodular sequence $v=(v_1,...,v_q)$ of vectors in $R^{2n}$ is called {\em non-degenerate} if for all subsets $I\subset \{1,...q\}$ of even cardinality $|I| \leq \min(q,2n)$, the Gram matrix $\Gamma(v_I)$ is invertible, where $v_I$ is the sequence of vectors obtained from $v$ by deleting all columns not in $I$.
We denote by 
$$U_q(R^{2n}) = \{v =(v_1,...,v_q)|\ v\ \text{non-degenerate unimodular in } R^{2n}\}$$
the set of non-degenerate unimodular sequences 
 of length $q$ in $R^{2n}$.
The set $U_0(R^{2n})$ is the singleton set consisting of the empty sequence, and the set $U_q(R^0)$ is the singleton set with unique element the sequence $(0,0,...,0)$ of length $q$.
The symplectic group $\Sp_{2n}(R)$ acts from the left on $U_q(R^{2n})$ by matrix multiplication $Av=(Av_1,...,Av_q)$ for $A\in \Sp_{2n}(R)$, $v=(v_1,...,v_q)\in U_{q}(R^{2n})$.
Note that the Gram matrix of $v$ and $Av$ are the same for all $A\in \Sp_{2n}(R)$.
The following was proved in \cite[\S 2]{SarwarMe}.

\begin{lemma}
\label{lem:GammaBij}
Let $R$ be a local ring.
Then for all integers $0 \leq q \leq 2n+1$ the Gram matrix defines a bijection
	$$\Gamma:\Sp_{2n}(R)\backslash U_q(R^{2n}) \stackrel{\cong}{\longrightarrow} \Skew_q^+(R).$$
\end{lemma}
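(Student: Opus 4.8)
The plan is to establish, in turn, that $\Gamma$ is well defined on $\Sp_{2n}(R)$-orbits, that it lands in $\Skew_q^+(R)$, and that it is bijective onto it. Well-definedness is immediate from ${}^tA\psi_{2n}A=\psi_{2n}$, which gives $\Gamma(Av)={}^tv({}^tA\psi_{2n}A)v=\Gamma(v)$ for $A\in\Sp_{2n}(R)$. The image lands in $\Skew_q^+(R)$ for a range reason: when $q\leq 2n+1$, every subset $I\subseteq\{1,\dots,q\}$ of even cardinality automatically satisfies $|I|\leq\min(q,2n)$, so the non-degeneracy clause in the definition of $U_q(R^{2n})$ says exactly that all the required submatrices $\Gamma(v)_I=\Gamma(v_I)$ are invertible. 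The same observation shows conversely that a unimodular sequence $v$ in $R^{2n}$ of length $q\leq 2n+1$ with $\Gamma(v)\in\Skew_q^+(R)$ is automatically non-degenerate; so the real content is (a) every $B\in\Skew_q^+(R)$ equals $\Gamma(v)$ for some \emph{unimodular} sequence $v$ in $R^{2n}$, and (b) $\Sp_{2n}(R)$ acts transitively on the set of such $v$.

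For (a) I would view $B$ as an alternating form on $R^q$ and embed $(R^q,B)$ isometrically, as a split submodule, into $(R^{2n},\psi_{2n})=\mathbb{H}^n$; the columns of such an embedding then form a basis of a free direct summand, hence a unimodular $v$ with $\Gamma(v)=B$. For $q\leq 2n$ this splits by parity. If $q$ is even, $B$ is non-degenerate, so $(R^q,B)$ is hyperbolic over the local ring $R$ by symplectic Gram--Schmidt and, since $q/2\leq n$, embeds as a split orthogonal summand of $\mathbb{H}^n$. If $q$ is odd, one first checks that $\operatorname{rad}(R^q,B)$ is a free rank-one direct summand (its generator is written down using the invertibility of $B_{\{1,\dots,q-1\}}$ together with the fact that the inverse of an invertible alternating matrix is alternating), splits off a non-degenerate, hence hyperbolic, orthogonal complement, and embeds the radical as a Lagrangian line of an $\mathbb{H}^1$ inside $\mathbb{H}^1\perp\mathbb{H}^{n-1}=\mathbb{H}^n$ (using $q\leq 2n-1$). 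The boundary case $q=2n+1$ is done by hand: pick a basis $(v_1,\dots,v_{2n})$ of $R^{2n}$ with Gram matrix $B_{\{1,\dots,2n\}}$, note that the equations $\langle v_j,v_{2n+1}\rangle=b_{j,2n+1}$ force $v_{2n+1}=\sum_i c_iv_i$ uniquely, and check $c_j^2=\det B_{\{1,\dots,2n+1\}\setminus\{j\}}/\det B_{\{1,\dots,2n\}}$ — obtained by comparing two expressions for the Gram matrix of the sequence with $v_j$ replaced by $v_{2n+1}$ — so that all $c_j$ are units, which is exactly the condition making $(v_1,\dots,v_{2n+1})$ unimodular. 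The degenerate cases $q=0$ and $n=0$ are trivial, both sides being singletons.

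For (b) I would run a Witt-extension argument over $R$. Given $v,w\in U_q(R^{2n})$ with $\Gamma(v)=\Gamma(w)=B$ and $q\leq 2n$, the spans $P=\Span(v_1,\dots,v_q)$ and $Q=\Span(w_1,\dots,w_q)$ are free direct summands of rank $q$, and $v_i\mapsto w_i$ defines an isometry $\phi\colon(P,\psi_{2n}|_P)\to(Q,\psi_{2n}|_Q)$ which must be extended to an element of $\Sp_{2n}(R)$. If $q$ is even, $P$ is a non-degenerate summand, so $R^{2n}=P\perp P^\perp$ and $R^{2n}=Q\perp Q^\perp$, and $\phi$ extends by any isometry $P^\perp\cong Q^\perp$, which exists since both are non-degenerate alternating of the same even rank. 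If $q$ is odd, $\phi$ carries the rank-one radical of $P$ onto that of $Q$; writing $P=N_P\perp P'$ and $Q=N_Q\perp\phi(P')$, passing to the non-degenerate summands $(P')^\perp$ and $(\phi(P'))^\perp$, and enlarging $N_P,N_Q$ to hyperbolic planes inside these (a Lagrangian line of a non-degenerate alternating space always extends to a hyperbolic plane that is a direct summand), one assembles the required automorphism from $\phi|_{P'}$, an isometry of the two hyperbolic planes agreeing with $\phi|_{N_P}$ on the Lagrangian lines, and an isometry between the leftover even-rank non-degenerate complements. Finally $q=2n+1$ reduces to $q=2n$: any $A\in\Sp_{2n}(R)$ carrying $(v_1,\dots,v_{2n})$ to $(w_1,\dots,w_{2n})$ automatically carries $v_{2n+1}$ to $w_{2n+1}$, as both are the same $B$-determined combination of the first $2n$ vectors.

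The step I expect to be the main obstacle is the odd-$q$ case of the Witt extension. Over a field this is the classical Witt extension theorem, but over a general local ring the argument works only if each submodule in sight — the spans, the radicals, the hyperbolic planes enlarging them, and all the orthogonal complements — is genuinely a direct summand of the ambient module and of the intermediate ones, so that the structure theory of alternating forms (symplectic bases, splitting off hyperbolic planes, isometry of equal-rank non-degenerate forms) can be applied repeatedly. The parity distinction and the boundary case $q=2n+1$ are, by comparison, routine bookkeeping.
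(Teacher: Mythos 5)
Your proof is correct, and it takes a genuinely different route from the one the paper relies on. The paper does not prove this lemma itself; it cites \cite[\S 2]{SarwarMe}, and the normal-form construction it does present (Lemma~\ref{lem:normalForm}, invoked later in Lemmas~\ref{lem:commdiag} and~\ref{lem:DetMgenerates}) suggests the cited argument is explicit and inductive: for each $A\in\Skew_q^+(R)$ one solves upper-triangular linear systems over $R$ to produce a canonical preimage, and surjectivity and transitivity of the orbit map fall out of that construction. You instead package the problem in the structure theory of alternating forms over a local ring --- hyperbolicity of non-degenerate modules, splitting off a free rank-one radical when $q$ is odd (your description of the generator $(-(B')^{-1}b,1)$ via the alternating inverse is exactly why the radical is a free direct summand), and a Witt extension theorem for isometries of direct summands --- and settle $q=2n+1$ directly via the forced relation $v_{2n+1}=\sum_i c_iv_i$ together with the determinant identity $c_j^2=\det B_{\{1,\dots,2n+1\}\setminus\{j\}}/\det B_{\{1,\dots,2n\}}$, which pins each $c_j$ to a unit. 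Your preliminary reduction --- that for $q\leq 2n+1$ every even $|I|$ automatically satisfies $|I|\leq\min(q,2n)$, so non-degeneracy of a unimodular $v$ is equivalent to $\Gamma(v)\in\Skew_q^+(R)$ --- is correct and is exactly what makes the stated range tight. The odd-$q$ Witt extension you flag as the main obstacle does go through over a local ring: the radicals, the non-degenerate orthogonal complements, and the enlarging hyperbolic planes are all direct summands because finitely generated projectives are free and the Gram-minor invertibility hypotheses supply precisely the needed unit entries. The paper's route buys effectivity and reuse (normal forms reappear in the spectral-sequence computations), while yours buys conceptual brevity at the cost of citing or redeveloping symplectic Witt theory over local rings.
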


\begin{definition}
Let $R$ be a local ring and $n,q\geq 1$ be integers.
A non-degenerate unimodular sequence $u=(u_1,...,u_q) \in U_q(R^{2n})$ is said to be {\em in normal form} 
if for $r=\min(2n,q)$, the matrix $(u_1,...,u_r)$ is upper triangular, $(u_i)_i=1$ for $i $ odd and $(u_i)_{i-1}=0$ for $i$ even, $i=1,...,r$.
\end{definition}

In this paper, we will identify $R^q$ with the subspace of $R^{2n}$ sending the standard basis vector $e_i$ of $R^q$ to the standard basis vector $e_i$ of $R^{2n}$, $i=1,...,q$.
Note that if $q\leq 2n$ and $u\in U_q(R^{2n})$ is in normal form then $u$ spans $R^q$.

\begin{lemma}
\label{lem:normalForm}
Let $R$ be a local ring and $n,q\geq 1$ be integers with $q\leq 2n+1$.
Then for every $A\in \Skew_q^+(R)$, there is a non-degenerate unimodular sequence $u\in U_q(R^{2n})$ which is in normal form and such that $\Gamma(u)=A$.
\end{lemma}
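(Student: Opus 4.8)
The plan is to build $u$ explicitly when $q\le 2n$ and to reduce the remaining case $q=2n+1$ to the case $q=2n$ by means of Lemma~\ref{lem:GammaBij}.

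Suppose first $q\le 2n$, so $r=q$. I would construct $u_1,\dots,u_q$ by induction on $k$, maintaining that $u_1,\dots,u_k$ satisfy the normal form conditions for indices $\le k$, that $\langle u_i,u_j\rangle=A_{ij}$ for all $i,j\le k$, and that $(u_1,\dots,u_j)$ is a basis of the direct summand $R^j\subseteq R^{2n}$ for each $j\le k$. One is forced to take $u_1=e_1$. For the step $k-1\to k$, observe that $u_1,\dots,u_{k-1}$ are then supported on the first $k-1$ coordinates and that $\psi_{2n}$ restricts on each even-dimensional $R^{2\ell}\subseteq R^{2n}$ to the nondegenerate form $\psi_{2\ell}$. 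If $k$ is odd I would set $u_k=w+e_k$, where $w\in R^{k-1}$ is the unique vector with $\langle u_j,w\rangle=A_{jk}$ for $j<k$ (unique because $(u_1,\dots,u_{k-1})$ is a basis of $R^{k-1}$ and $\psi_{k-1}$ is nondegenerate); then $\langle u_j,u_k\rangle=A_{jk}$ for $j<k$ since $\langle u_j,e_k\rangle=0$, and $(u_1,\dots,u_k)$ is a basis of $R^k$ because $u_k\equiv e_k$ modulo $R^{k-1}$. If $k$ is even I would set $u_k=w+c\,e_k$, where $w\in R^{k-2}$ is the unique vector with $\langle u_j,w\rangle=A_{jk}$ for $j\le k-2$ and $c\in R$ is then determined by $\langle u_{k-1},u_k\rangle=A_{k-1,k}$; a short computation with the hyperbolic pairing $\langle e_{k-1},e_k\rangle=1$ gives $\langle u_j,u_k\rangle=A_{jk}$ for all $j<k$. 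For the basis clause in the even case, let $\tilde U$ be the $k\times k$ matrix of the first $k$ rows of $(u_1,\dots,u_k)$; it is upper triangular, and ${}^t\tilde U\,\psi_k\,\tilde U$ is the Gram matrix $A_{\{1,\dots,k\}}$, so $\det A_{\{1,\dots,k\}}=(\det\tilde U)^2$. As $k\le q$ is even and $A\in\Skew^+_q$, the left side is a unit, hence $\det\tilde U\in R^*$ and $(u_1,\dots,u_k)$ is a basis of $R^k$. When $k=q$ the sequence $(u_1,\dots,u_q)$ is a basis of the summand $R^q$, so every subsequence of length $\le q$ is a basis of a direct summand; thus $u$ is unimodular, it is nondegenerate since $\Gamma(u)=A\in\Skew^+_q$, and so $u\in U_q(R^{2n})$ is the desired normal form sequence.

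Now let $q=2n+1$, so $r=2n$. By Lemma~\ref{lem:GammaBij} choose $v\in U_{2n+1}(R^{2n})$ with $\Gamma(v)=A$. Unimodularity of $v$ forces $(v_1,\dots,v_{2n})$ to be a basis of a rank-$2n$ summand of $R^{2n}$, hence of $R^{2n}$, so $V:=(v_1,\dots,v_{2n})$ is invertible and $B:=A_{\{1,\dots,2n\}}={}^tV\psi_{2n}V$ lies in $\Skew^+_{2n}(R)$ (its even submatrices are among those of $A$). By the case already treated there is a normal form $u'=(u_1,\dots,u_{2n})\in U_{2n}(R^{2n})$ with $\Gamma(u')=B$, and $T:=(u_1,\dots,u_{2n})$ is again invertible with ${}^tT\psi_{2n}T=B$. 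Set $g:=TV^{-1}\in\GL_{2n}(R)$; then ${}^tg\,\psi_{2n}\,g={}^t(V^{-1})\,{}^tT\psi_{2n}T\,V^{-1}={}^t(V^{-1})\,B\,V^{-1}={}^t(V^{-1})\,{}^tV\psi_{2n}V\,V^{-1}=\psi_{2n}$, so $g\in\Sp_{2n}(R)$. Since the $\Sp_{2n}(R)$-action preserves $U_{2n+1}(R^{2n})$ and the Gram matrix, $gv\in U_{2n+1}(R^{2n})$ with $\Gamma(gv)=A$, while $(gv_1,\dots,gv_{2n})=gV=T=(u_1,\dots,u_{2n})$ is in normal form; as the normal form condition for $q=2n+1$ constrains only the first $2n$ vectors, $gv$ is the sequence we want. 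The main obstacle is unimodularity: for $q\le 2n$ it hinges on the Pfaffian-type identity $\det A_{\{1,\dots,k\}}=(\det\tilde U)^2$ together with nondegeneracy of $A$, and for $q=2n+1$ a direct vector-by-vector construction does not visibly yield a unimodular sequence, which is why one routes through Lemma~\ref{lem:GammaBij} and the case $q=2n$.
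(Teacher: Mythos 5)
Your proof is correct, and for $q\le 2n$ it is essentially the same induction the paper uses: build the columns one at a time, taking $u_k$ of the form $w+e_k$ ($k$ odd) or $w+c\,e_k$ ($k$ even), where $w$ is determined by the pairings against the previously built vectors, and verify unimodularity at each even stage via the identity $\det A_{\{1,\dots,k\}}={}^t\!\tilde U\psi_k\tilde U=(\det\tilde U)^2$. Where you genuinely diverge is at $q=2n+1$. The paper simply continues the same induction one more step (at $q=2n$ even, set $u_{2n+1}=x$) and does not explicitly address why the resulting length-$(2n+1)$ sequence is unimodular — i.e.\ why \emph{every} length-$2n$ subsequence, not just $(u_1,\dots,u_{2n})$, is a basis of $R^{2n}$. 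You instead invoke Lemma~\ref{lem:GammaBij} to obtain some $v\in U_{2n+1}(R^{2n})$ with $\Gamma(v)=A$, apply the already-proved case $q=2n$ to the truncation, and transport the normal form back via the symplectic map $g=TV^{-1}$; since membership in $U_{2n+1}(R^{2n})$ is $\Sp_{2n}(R)$-invariant and the normal form condition for $q=2n+1$ only constrains the first $2n$ columns, this is airtight. Your route has the advantage of isolating exactly the point (unimodularity of the long sequence) that the direct construction leaves implicit, at the small cost of depending on Lemma~\ref{lem:GammaBij}, which the paper's proof does not formally need.
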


In the situation of Lemma \ref{lem:normalForm}, we will call $u$ a {\em normal form of $A$}.

\begin{proof}[Proof of Lemma \ref{lem:normalForm}]
This is proved by induction on $q\geq 1$.
The case $q=1$ is clear, choosing $u_1=e_1$.
Assume we are given $A\in \Skew_{q+1}(R)$ and $u\in U_{q}(R^{2n})$ generating $R^{q}$, for instance, $u$ is in normal form, such that $\Gamma(u)=A_{\{1,...,q\}}$ where for $I\subset \{1,...,q+1\}$ we write $A_I$  for the skew symmetric matrix obtained from $A$ by deleting all rows and columns not in $I$.
Then $u$ is a basis in $R^q$ and thus defines an invertible $q\times q$ matrix.
If $q$ is even, then there is a unique $x\in R^{q}$ such that $\Gamma(u,x)=A_{\{1,...,q+1\}}$, namely, the solution to $^tu\psi_qx=v$ where $v$ is the $q+1$st column of $A$ with last row removed.
If $q=2n$, set $u_{q+1}=x$.
If $q<2n$, set $u_{q+1}=x+e_{q+1}$.
If $q$ is odd, then $q-1$ is even and we let $x\in R^{q-1}$ be the unique solution to 
$\Gamma(u_1,...,u_{q-1},x)=A_{\{1,...q-1,\hat{q},q+1\}}$
and set $u_{q+1}=x + \alpha e_{q+1}$ where $\alpha$ is the $(q,q+1)$-entry of $A$.
\end{proof}

For a set $S$, we denote by $\Z[S]$ the free abelian group with basis $S$.
We make the graded abelian group 
\begin{equation}
\label{eqn:ZU*cx}
\Z[U_*(R^{2n})] = \{\Z[U_q(R^{2n})],q\geq 0\}
\end{equation}
into a chain complex with differential $d:\Z[U_q(R^{2n})] \to \Z[U_{q-1}(R^{2n})]$ defined on basis elements $(v_1,...,v_q)$ by 
$$dv = \sum_{i=1}^q(-1)^{i+1}d_iv$$
where $d_iv=v^{\wedge}_{{i}}=(v_1,...,\hat{v}_i,...,v_q)$ is obtained from $v$ by deleting the $i$-th vector $v_i$.
The following was proved in \cite[\S 2]{SarwarMe}.

\begin{lemma}
	\label{lem:UExact}
	Let $R$ be a local ring with infinite residue field and $n\geq 0$ an integer.
	Then the chain  complex $(\Z[U_*(R^{2n})],d_*)$ is acyclic.
	 That is, for all $p\in \Z$ we have
	$$H_p(\Z[U_*(R^{2n})])=0.$$
\end{lemma}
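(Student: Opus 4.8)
\emph{Sketch of a proof plan for Lemma~\ref{lem:UExact}.}
The plan is to exhibit, for any cycle, an explicit bounding chain by ``prepending a generic vector''. For a fixed $v_0\in R^{2n}$, whenever $(v_0,v_1,\dots,v_q)\in U_{q+1}(R^{2n})$ I set $s_{v_0}(v_1,\dots,v_q)=(v_0,v_1,\dots,v_q)$. Since every subsequence of a non-degenerate unimodular sequence is again one, $v_0$ can then also be prepended to each face $d_j(v_1,\dots,v_q)$, and a routine check of the simplicial identities gives
\[
d\,s_{v_0}(v) \;=\; v - s_{v_0}(dv)
\]
on the subgroup of $\Z[U_*(R^{2n})]$ spanned by all such $v$ together with their faces. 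Hence, if $z\in\Z[U_q(R^{2n})]$ is a cycle and a single $v_0$ can be prepended to every one of the finitely many basis elements occurring in $z$, then $z=d\,s_{v_0}(z)$ is a boundary. As $H_p=0$ is vacuous for $p<0$, everything reduces to the statement: \emph{for any $v^{(1)},\dots,v^{(m)}\in U_q(R^{2n})$ there is $v_0\in R^{2n}$ with $(v_0,v^{(i)})\in U_{q+1}(R^{2n})$ for all $i$.}

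To produce such a $v_0$ I would work over the residue field $k=R/\mathfrak{m}$: since $R$ is local, a square matrix over $R$ is invertible iff its reduction mod $\mathfrak{m}$ is, and $r$ vectors span a direct summand of $R^{2n}$ iff their reductions are $k$-linearly independent; so it is enough to find $\bar v_0\in k^{2n}$ meeting the conditions over $k$ and lift it arbitrarily. Unwinding the definition of $U_{q+1}(R^{2n})$, the conditions involving $v_0$ are: (i) for each $J\subset\{1,\dots,q\}$ with $|J|\le\min(q+1,2n)-1$, the reduction of $v_0$ avoids $\Span_k\{\bar v^{(i)}_j\mid j\in J\}$; and (ii) for each $J\subset\{1,\dots,q\}$ of odd cardinality with $|J|+1\le\min(q+1,2n)$, the Gram matrix $\Gamma(v_0,v^{(i)}_J)$ is invertible. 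The type-(i) conditions exclude $\bar v_0$ from finitely many \emph{proper} subspaces of $k^{2n}$ (proper since $|J|<2n$). Each type-(ii) condition has the form $P(\bar v_0)\ne 0$ for a polynomial $P$ on $\mathbb{A}^{2n}_k$, whose bad locus is a proper closed subset \emph{provided} $P\not\equiv 0$. Since $k$ is infinite, $k^{2n}$ is not a finite union of proper closed subsets, so such a $\bar v_0$ exists once one checks $P\not\equiv 0$ in each case.

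The crux — and the step I expect to be the main obstacle — is verifying that each Gram condition in (ii) is non-vacuous, i.e.\ satisfied by \emph{some} vector. Fix $i$ and such a $J$. If $|J|<q$, choose $j_0\in\{1,\dots,q\}\setminus J$; then $J\cup\{j_0\}$ has even cardinality $|J|+1\le\min(q,2n)$, so $\Gamma(v^{(i)}_{J\cup\{j_0\}})$ is invertible by non-degeneracy of $v^{(i)}$, and $\Gamma(v^{(i)}_{j_0},v^{(i)}_J)$ is that matrix up to a simultaneous permutation of rows and columns, hence invertible; thus $v_0=v^{(i)}_{j_0}$ works. The remaining case $|J|=q$ forces $q$ odd and $q+1\le 2n$; here deleting the last row and column of the skew $q\times q$ matrix $\Gamma(v^{(i)})$ gives $\Gamma(v^{(i)}_{\{1,\dots,q-1\}})$, which is invertible (an even subset of size $q-1<\min(q,2n)$), so $\rk\Gamma(v^{(i)})\ge q-1$ and, being skew of odd size, $\Gamma(v^{(i)})$ has corank exactly $1$. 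Let $\bar\xi\in k^q$ generate $\ker\bar\Gamma(v^{(i)})$ and $\bar w=\sum_j\bar\xi_j\bar v^{(i)}_j$, which is nonzero because $v^{(i)}$ is unimodular of length $q\le 2n-1$. A Pfaffian (bordered skew-matrix) computation shows the reduction of $\det\Gamma(v_0,v^{(i)}_J)$ is a nonzero scalar multiple of $\langle\bar v_0,\bar w\rangle^2$; since $\psi_{2n}$ is non-degenerate over $k$, some $\bar v_0$ has $\langle\bar v_0,\bar w\rangle\ne 0$. This disposes of (ii).

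Altogether, the chain-homotopy identity, the reduction to $k$, and the avoidance of finitely many proper subvarieties are formal; the only genuine work is the corank-one ``top stratum'' of the previous paragraph, which is where the everywhere non-degeneracy of the form and the infinitude of the residue field are both used (the corank-one input can alternatively be extracted from Lemma~\ref{lem:GammaBij} by extending a class in $\Skew_q^+(R)$ to $\Skew_{q+1}^+(R)$). The case $n=0$ is subsumed, with $v_0=0$ serving for all cycles.
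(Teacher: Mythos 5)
Your proof is correct, and it is essentially the standard partially-defined contracting-homotopy argument: prepend a vector $v_0$ via $s_{v_0}$, verify $d\,s_{v_0}+s_{v_0}\,d=\mathrm{id}$ where both sides are defined, and use the infinite residue field to find a $v_0$ that avoids the finitely many proper closed loci (over $k$) coming from the unimodularity and Gram-invertibility conditions. The paper does not prove Lemma~\ref{lem:UExact} here but defers to \cite[\S 2]{SarwarMe}, where the same strategy is used; your detailed steps check out, in particular the corank-one computation in the case $|J|=q$ odd, where $\det\Gamma(\bar v_0,\bar v^{(i)}_J)$ is indeed a nonzero scalar times $\langle\bar v_0,\bar w\rangle^2$ and $\bar w\neq 0$ by unimodularity of $v^{(i)}$ since $q\le 2n-1$.
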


Similarly, 
we make the graded abelian group $\Z[\Skew^+_*(R)]$ into a chain complex with differential $d:\Z[\Skew^+_q(R)] \to \Z[\Skew^+_{q-1}(R)]$ defined on basis elements $A\in \Skew^+_q(R)$ by 
$$dA = \sum_{i=1}^q(-1)^{i+1}d_iA$$
where $d_iA=A^{\wedge}_{{i}}$ is obtained from $A$ by deleting the $i$-th row and column.
The following was again proved in \cite[\S 2]{SarwarMe}.

\begin{lemma}
	\label{lem:SkewExact}
	Let $R$ be a local ring with infinite residue field.
	Then the chain  complex $(\Z[\Skew_*^+(R)],d_*)$ is acyclic.
	 That is, for all $p\in \Z$ we have
	$$H_p(\Z[\Skew_*^+(R)])=0.$$
\end{lemma}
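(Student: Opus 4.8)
The statement to prove is Lemma \ref{lem:SkewExact}: the chain complex $(\Z[\Skew^+_*(R)],d_*)$ is acyclic. The natural strategy is to transport the known acyclicity of $(\Z[U_*(R^{2n})],d_*)$ (Lemma \ref{lem:UExact}) across the Gram map, but there is a size mismatch: Lemma \ref{lem:GammaBij} only gives a bijection $\Sp_{2n}(R)\backslash U_q(R^{2n}) \cong \Skew^+_q(R)$ in the range $q \leq 2n+1$, whereas we need acyclicity of $\Z[\Skew^+_*(R)]$ in \emph{all} degrees. So the first step is to fix $p$, then choose $n$ large enough (say $2n+1 > p+2$, or even $n$ tending to infinity) so that in the relevant range of degrees $q \leq p+2$ the Gram map is a bijection onto orbits. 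The differential on $\Z[U_*(R^{2n})]$ is $\Sp_{2n}(R)$-equivariant and the Gram matrix is constant on orbits, so $\Gamma$ induces a map of chain complexes $\Z[U_*(R^{2n})] \to \Z[\Skew^+_*(R)]$ which, by Lemma \ref{lem:GammaBij}, is a surjection with a well-understood kernel in low degrees. The cleanest route, however, is probably to build an explicit chain homotopy.

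First I would construct a contracting chain homotopy $s:\Z[\Skew^+_q(R)] \to \Z[\Skew^+_{q+1}(R)]$ directly, mimicking the standard "add a new vector" homotopy used for $\Z[U_*(R^{2n})]$ but phrased on Gram matrices. Given $A \in \Skew^+_q(R)$, one wants to adjoin a $(q+1)$-st row and column so that the enlarged matrix is again non-degenerate; by Lemma \ref{lem:normalForm} (and its proof) such extensions exist and can be parametrized, and when the residue field is infinite one can choose the new entries in "general position" so that \emph{all} the relevant minors $A_I$ for $I$ of even cardinality remain invertible — this is exactly the kind of genericity argument that makes $\Z[U_*(R^{2n})]$ acyclic in \cite{SarwarMe}. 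Concretely, I would set $s(A) = (e, A)$ where $e$ denotes the formal extension of $A$ by a suitably generic new vertex; the usual simplicial identity $ds + sd = \pm \mathrm{id}$ (up to the degree-zero correction) then follows from the face-deletion combinatorics exactly as in the unimodular-sequence case. Here "suitably generic" must be made uniform: because $R$ is local with infinite residue field, one can pick a single choice of new entries avoiding the finitely many proper closed conditions imposed by the finitely many minors that must stay invertible, and push the remaining local-ring arithmetic through as in Lemma \ref{lem:normalForm}.

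The main obstacle I anticipate is \textbf{making the extension canonical and the genericity choice coherent across all faces simultaneously}, so that $s$ is an honest map of abelian groups (not just a choice) and the homotopy identity holds on the nose. In the $\Z[U_*]$ setting this is handled by a limit/colimit argument over larger and larger ambient spaces, or by a clever splitting; transplanting it to $\Skew^+$ one must check that the genericity conditions needed to keep $A_I$ invertible for \emph{every} even $I \subseteq \{1,\dots,q+1\}$ containing the new index can all be met at once, and that they are preserved under the face maps $d_i$. An alternative that sidesteps this is: use the chain map $\Gamma_*:\Z[U_*(R^{2n})] \to \Z[\Skew^+_*(R)]$, observe it is degreewise surjective, and in the range $q\leq 2n+1$ split it by choosing normal forms (Lemma \ref{lem:normalForm}), giving $\Z[\Skew^+_*(R)]$ as a retract of the acyclic complex $\Z[U_*(R^{2n})]$ in that range; since $n$ was arbitrary and $H_p$ only depends on degrees $p-1,p,p+1$, letting $n\to\infty$ gives $H_p(\Z[\Skew^+_*(R)])=0$ for all $p$. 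I would present whichever of these two arguments \cite{SarwarMe} makes cleanest; the retract-via-normal-forms argument is the lower-risk option and is the one I would write up.
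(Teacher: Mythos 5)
The paper does not give its own proof of this lemma but cites \cite[\S 2]{SarwarMe}, so you are reconstructing the argument from scratch. Your reconstruction has a genuine gap, and it sits precisely in the route you designate as the ``lower-risk option'' you would write up.

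The normal-form map $\sigma:\Skew^+_q(R)\to U_q(R^{2n})$ of Lemma \ref{lem:normalForm} is \emph{not} a chain map, so it does not exhibit $\Z[\Skew^+_*(R)]$ (or any truncation of it) as a retract of $\Z[U_*(R^{2n})]$ in the category of complexes. Concretely, for $A=\left(\begin{smallmatrix}0&a\\-a&0\end{smallmatrix}\right)\in\Skew^+_2(R)$ with $a\in R^*$, the inductive construction of Lemma \ref{lem:normalForm} gives $\sigma(A)=(e_1,ae_2)$, hence $d_1\sigma(A)=(ae_2)$; but $d_1A$ is the $1\times1$ zero matrix whose normal form is $\sigma(d_1A)=(e_1)$. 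Since $(ae_2)\neq(e_1)$, $\sigma$ does not commute with $d_1$ (it generally commutes only with the last face $d_q$). Having $\Gamma\circ\sigma=\mathrm{id}$ degreewise is not enough: to transport acyclicity through a retraction you need a chain-level section, and that is exactly what the Gram fibres, being $\Sp_{2n}(R)$-orbits, do not provide canonically. So the argument you say you would present does not prove the lemma.

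Your other route, an explicit contracting homotopy, is the right general idea, but the ``coherence across faces'' obstacle you flag is substantive and is not resolved in the proposal. The mechanism that makes the $\Z[U_*(R^{2n})]$ homotopy work is the choice of a \emph{single} generic vector $w\in R^{2n}$, the same in every degree, so that $s(v)=(w,v)$ gives $ds+sd=\mathrm{id}$ identically. There is no direct analogue here, because the ``new row'' to adjoin to $A\in\Skew^+_q(R)$ lives in $R^q$, whose dimension changes with $q$. The naive fix, a constant new row $(c,\dots,c)$, fails outright: take $A\in\Skew^+_3(R)$ with $a_{12}=u$, $a_{13}=u+v$, $a_{23}=v$ for units $u,v,u+v\in R^*$ (such $u,v$ exist since the residue field is infinite). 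Then the Pfaffian of the $4\times4$ extension of $A$ by the row $(c,c,c)$ is $c\cdot a_{23}-c\cdot a_{13}+c\cdot a_{12}=c(v-(u+v)+u)=0$ for \emph{every} $c$, so no constant extension is in $\Skew^+_4(R)$. Thus ``exactly as in the unimodular-sequence case'' does not carry over, and a correct proof must confront this degree-dependence directly, for instance by arguing cycle by cycle and exploiting the cancellations coming from $dz=0$. That work is not done in the proposal.
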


\section{The spectral sequence and its $E^1$-page}

In this section we introduce the spectral sequence (\ref{eqn:E1spseq}) which leads to our homological stability range in Theorem \ref{thm:OptStabilityIntro} and identify its $E^1$-term.
\vspace{1ex}

For a complex $M_*$ of abelian groups and an integer $r\in \Z$, we denote by $M_{\leq r} \subset M_*$ the subcomplex which is $(M_{\leq r})_i=M_i$ for $i\leq r$ and $(M_{\leq r})_i=0$ for $i>r$.
We call the resulting filtration $\cdots \subset M_{\leq r-1} \subset M_{\leq r} \subset M_{\leq r+1} \subset \cdots$ of $M_*$, the {\em filtration by degree}.
The filtration by degree
$$C_{\leq 0}(R^{2n}) \subset C_{\leq 1}(R^{2n}) \subset \dots \subset C_{\leq 2n-1}(R^{2n}) \subset C_{\leq 2n}(R^{2n})=C_*(R^{2n})$$
of the complex 
$$C_*(R^{2n}) = \Z[U_{\leq 2n}(R^{2n})]$$
of $\Sp_{2n}(R)$-modules
yields the exact sequence of complexes
$$0 \to C_{\leq q-1}(R^{2n}) \to C_{\leq q}(R^{2n}) \to C_{\leq q}(R^{2n})/C_{\leq q-1}(R^{2n}) \to 0.$$
Upon applying the functor $H_*(\Sp_{2n},\phantom{fd}) = \Tor_*^{\Sp_{2n}}(\Z,\phantom{fd})$, the exact sequences yield the exact couple 
$D^1_{p+1,q-1} \to D^1_{p,q} \to E^1_{p,q} \to D^1_{p,q-1}$
where
$$D^1_{p,q}=H_{p+q}(\Sp_{2n}(R),C_{\leq q}(R^{2n})),\hspace{3ex} E^1_{p,q}=H_{p+q}(\Sp_{2n}(R),C_{\leq q}(R^{2n})/C_{\leq q-1}(R^{2n}))$$
and hence the spectral sequence
\begin{equation}
\label{eqn:E1spseq}
E^1_{p,q}=H_p(\Sp_{2n}(R),C_q(R^{2n})) \Rightarrow H_{p+q}(\Sp_{2n}(R),C_*(R^{2n}))
\end{equation}
with differential $d^r_{p,q}$ of bidegree $(r-1,-r)$.

The following lemma shows that the abutment of the spectral sequence (\ref{eqn:E1spseq}) vanishes in degrees $p+q<2n$.

\begin{lemma}
\label{lem:AbuttVanishing}
Let $R$ be a local ring with infinite residue field. 
Then
$$H_{i}(\Sp_{2n}(R),C_*(R^{2n})) = 0,\hspace{3ex}i<2n.$$
\end{lemma}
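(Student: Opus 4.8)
The plan is to deduce this purely formally from the acyclicity statement of Lemma~\ref{lem:UExact}, via the hyperhomology spectral sequence for equivariant homology with coefficients in a complex. First I would observe that $C_*(R^{2n}) = \Z[U_{\leq 2n}(R^{2n})]$ is, by definition, the truncation in degrees $\leq 2n$ of the complex $\Z[U_*(R^{2n})]$, which is acyclic by Lemma~\ref{lem:UExact}. Since this truncation agrees with $\Z[U_*(R^{2n})]$ in all degrees $< 2n$, we get $H_q(C_*(R^{2n})) = 0$ for $q < 2n$; for $q > 2n$ the group $C_q(R^{2n})$ itself is zero, so $H_q(C_*(R^{2n})) = 0$ there as well. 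Hence the homology of the complex $C_*(R^{2n})$ of $\Sp_{2n}(R)$-modules is concentrated in degree $2n$, where it equals the module $Z_{2n}$ of $2n$-cycles of $\Z[U_*(R^{2n})]$ (equivalently, the image of $d\colon \Z[U_{2n+1}(R^{2n})] \to \Z[U_{2n}(R^{2n})]$).

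Next I would feed this into the hyperhomology spectral sequence
$$E^2_{p,q} = H_p\bigl(\Sp_{2n}(R),\, H_q(C_*(R^{2n}))\bigr) \;\Longrightarrow\; H_{p+q}\bigl(\Sp_{2n}(R),\, C_*(R^{2n})\bigr),$$
arising from the double complex $P_* \otimes_{\Z[\Sp_{2n}(R)]} C_*(R^{2n})$ for a projective resolution $P_* \to \Z$ over $\Z[\Sp_{2n}(R)]$. This spectral sequence converges because $C_*(R^{2n})$ is bounded. Since $H_q(C_*(R^{2n}))$ vanishes for $q \neq 2n$, only the row $q = 2n$ survives, so the spectral sequence degenerates at $E^2$ and yields the isomorphism $H_i(\Sp_{2n}(R), C_*(R^{2n})) \cong H_{i-2n}(\Sp_{2n}(R), Z_{2n})$. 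Group homology vanishes in negative degrees, so the right-hand side is zero whenever $i < 2n$, which is exactly the claim. (Alternatively, one could use the short exact sequence of complexes relating $C_*(R^{2n})$ to the acyclic complex $\Z[U_*(R^{2n})]$ and its co-truncation in degrees $> 2n$, together with the vanishing of hyper-$\Tor$ against an acyclic bounded-below complex; this gives the same conclusion.)

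There is no genuinely hard step: the content is entirely in Lemma~\ref{lem:UExact}. The only point that needs care is that the terms $\Z[U_q(R^{2n})]$ are permutation modules for $\Sp_{2n}(R)$ and are in general neither projective nor flat, so one cannot reason degree by degree; invoking the hyperhomology spectral sequence (legitimate since the coefficient complex is bounded) circumvents this, using only that the homology of $C_*(R^{2n})$ is supported in degrees $\geq 2n$.
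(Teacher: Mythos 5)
Your proof is correct and follows essentially the same route as the paper: both identify the homology of $C_*(R^{2n})$ as concentrated in degree $2n$ (equal to the $2n$-cycles $M$), observe that the quasi-isomorphism $M[2n] \hookrightarrow C_*(R^{2n})$ induces an isomorphism on group hyperhomology, and conclude from the vanishing of group homology in negative degrees. You simply make explicit the hyperhomology spectral sequence that justifies the quasi-isomorphism-invariance step, which the paper invokes implicitly.
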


\begin{proof}
Let $M$ be the kernel of $d:C_{2n}(R^{2n}) \to C_{2n-1}(R^{2n})$.
By Lemma \ref{lem:UExact}, the inclusion of complexes $M[2n] \to C_*(R^{2n})$ is a quasi-isomorphism.
In particular, 
$$H_{i}(\Sp_{2n}(R),C_*(R^{2n})) = H_{i}(\Sp_{2n}(R),M[2n])= H_{i-2n}(\Sp_{2n}(R),M).$$
The result follows since for all $G$-modules $M$, we have $H_j(G,M)=0$ for $j<0$.
\end{proof}

\begin{remark}
In \cite{SarwarMe}, we studied the spectral sequence associated with the complex $\Z[U_{\leq 2n+1}(R^{2n})]$ and its filtration by degree.
\end{remark}

Let $0 \leq q \leq 2n$ be integers.
Let $v\in U_q(R^{2n})$ be a non-degenerate unimodular sequence which spans $R^q$.
Note that for every $A\in \Skew_q(R)$ there is such a $v$ with $\Gamma(v)=A$, for instance a normal form of $A$ will do; see Lemma \ref{lem:normalForm}.
As an ordered basis of $R^q$, $v$ defines an element of $\GL_q(R)$ and as such has a determinant $\det(v)\in R^*$.
Using the standard functoriality of group homology as in \cite[III.8]{Br82}, we define a map 
$$f_v: H_p(\Sp_{2n-q}(R);\Z) \longrightarrow H_p(\Sp_{2n}(R);\Z[U_q(R^{2n})])$$
by
$$
f_v = \left\{ 
\renewcommand\arraystretch{1.5}
\begin{array}{lll} 
(\eps,v)_* & 0 \leq q \leq 2n,& q \text{\ even}\\
(\eps \circ c_{\det v}, v)_* & 0 \leq q \leq 2n,& q \text{ odd}
\end{array}\right.
$$
where $\eps:\Sp_{2n-q} (R) \to \Sp_{2n}(R)$ is the standard embedding, $v$ denotes the homomorphism of abelian groups $\Z \to \Z[U_q(R^{2n})]$ sending $1$ to $v$, and for $a\in R^*$, $c_a:\Sp_{2n-q}(R) \to \Sp_{2n-q}(R)$ is conjugation $A \mapsto D A D^{-1}$ with the diagonal  matrix
$$D= \left(\begin{smallmatrix}a & 0 & 0 \\ 0 & a^{-1} & 0 \\ 0 & 0 & 1_{2n-q-1}\end{smallmatrix}\right) \in \Sp_{2n-q+1}(R)$$
for $0<  q <2n$ odd.

\begin{lemma}
\label{lem:vIndep}
Let $q$ be an integer such that $0 \leq q \leq 2n$.
Let $u,v\in U_q(R^{2n})$ be non-degenerate unimodular sequences that span $R^q$.
If $\Gamma(u) = \Gamma(v)$ then $f_u=f_v$.
\end{lemma}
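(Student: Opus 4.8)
The plan is to reduce the claim to a statement about the stabilizer subgroups and the action of $\Sp_{2n}(R)$ on the orbit of $v$ (equivalently $u$) in $U_q(R^{2n})$. Since $\Gamma(u)=\Gamma(v)$, Lemma~\ref{lem:GammaBij} tells us that $u$ and $v$ lie in the same $\Sp_{2n}(R)$-orbit, so there is $g\in \Sp_{2n}(R)$ with $gv=u$. The map $f_v$ is built from the homomorphism $\Z\to \Z[U_q(R^{2n})]$, $1\mapsto v$, together with an inclusion (twisted by conjugation when $q$ is odd) of $\Sp_{2n-q}(R)$ into $\Sp_{2n}(R)$ whose image stabilizes $v$. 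The standard functoriality of group homology used to define $f_v$ (as in \cite[III.8]{Br82}) factors through $H_p$ of the pair (stabilizer of $v$, the $\Sp_{2n}$-module $\Z[\Sp_{2n}v]$), and Shapiro's lemma identifies the latter with $H_p(\Sp_{2n};\Z[U_q(R^{2n})])$ only after summing over orbits; so the real content is that $g$ can be chosen to normalize the distinguished copy of $\Sp_{2n-q}(R)$, inducing the identity on its homology.

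First I would make the orbit computation explicit. Because both $u$ and $v$ span $R^q\subset R^{2n}$, the element $g$ with $gv=u$ preserves $R^q$, hence has block-triangular shape $\left(\begin{smallmatrix} B & * \\ 0 & C\end{smallmatrix}\right)$ with $B\in\GL_q(R)$ the change-of-basis matrix from $v$ to $u$ (so $B u = v$ read as bases, i.e.\ $\det B = \det v/\det u$ up to the chosen conventions) and $C\in\Sp_{2n-q}(R)$ or in $\Sp_{2n-q}(R)$ twisted appropriately. The key point is that conjugation by $g$ carries the standard $\eps:\Sp_{2n-q}(R)\hookrightarrow\Sp_{2n}(R)$ (the stabilizer of the standard basis vectors $e_1,\dots,e_q$, which is where $f_v$ lands when $v$ is itself the standard normal-form sequence) to the stabilizer of $u$ via an inner automorphism. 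Then $f_u = (c_g)_* \circ f_v$ on the nose at the chain level, and $(c_g)_* = \mathrm{id}$ on $H_p(\Sp_{2n}(R);-)$ because inner automorphisms act trivially on group homology (\cite[II.6, III.8]{Br82}).

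Second, I would handle the parity bookkeeping. For $q$ even no determinant twist appears and the above is essentially the whole argument: choose $g$, note $c_g$ acts as the identity on homology, and that $(\eps,v)_*=(\eps,u)_*\circ(\text{inner})$. For $q$ odd the definition of $f_v$ already incorporates the conjugation $c_{\det v}$ precisely so that the determinant of the change-of-basis block is absorbed: writing $\det u = \det v \cdot \det B$ with $B$ as above, the composite $c_{\det u} = c_{\det B}\circ c_{\det v}$, and $c_{\det B}$ is realized by conjugating inside $\Sp_{2n}(R)$ by the block element $\mathrm{diag}(B,\ {}^tB^{-1})$-type correction sitting in the parabolic stabilizing $R^q$. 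So again $f_u$ and $f_v$ differ by an inner automorphism of $\Sp_{2n}(R)$, hence agree on $H_p$. I should check that the twisting matrix $D$ used in the definition lies in $\Sp_{2n-q+1}(R)$ and that the diagonal correction really does land in $\Sp_{2n}(R)$; this is where one must be careful with the symplectic form conventions, but it is a finite matrix computation.

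The main obstacle I anticipate is not conceptual but bookkeeping: pinning down, with the paper's explicit embedding conventions (\ref{eqn:GSpIncls})--(\ref{eqn:GSpodd}) and the asymmetric treatment of even vs.\ odd $q$, that the element $g$ conjugating $v$ to $u$ conjugates the twisted copy $\eps\circ c_{\det v}$ of $\Sp_{2n-q}(R)$ exactly onto the twisted copy $\eps\circ c_{\det u}$, with \emph{no residual automorphism} of $\Sp_{2n-q}(R)$ beyond an inner one. Equivalently: the normalizer in $\Sp_{2n}(R)$ of the block $R^q$ acts on the Levi quotient, and the induced outer action on $\Sp_{2n-q}(R)$ factors through the determinant $R^*\to\mathrm{Out}$, which is exactly what the $c_{\det v}$ twist is designed to trivialize. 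Once that matching is verified, triviality of inner automorphisms on group homology finishes the proof.
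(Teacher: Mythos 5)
Your overall strategy is the same as the paper's: exhibit an element $g\in\Sp_{2n}(R)$ carrying one sequence to the other, invoke the standard fact from \cite[III.8]{Br82} that the pair (conjugation by $g$, left multiplication by $g$) induces the identity on $H_p(\Sp_{2n}(R);\Z[U_q(R^{2n})])$, and then check that $c_g$ carries the twisted embedding $\eps\circ c_{\det u}$ to $\eps\circ c_{\det v}$. For $q$ even your account is essentially complete, since $R^q$ is a non-degenerate symplectic subspace of $R^{2n}$ and the isometry $B$ of $R^q$ sending $u$ to $v$ extends to $B\oplus 1_{R^{2n-q}}\in\Sp_{2n}(R)$, which visibly commutes with $\eps(\Sp_{2n-q}(R))$.

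For $q$ odd, however, there is a genuine gap: what you defer as ``bookkeeping'' is the content of the lemma, and your sketch of it is off the mark. For odd $q$ the subspace $R^q\subset R^{2n}$ is \emph{degenerate} (the restriction of $\psi_{2n}$ is an odd-sized skew form, hence singular), so $R^q$ has no symplectic complement and $\psi_{2n}$ is not block-diagonal with respect to $R^q\oplus R^{2n-q}$. The ``$\mathrm{diag}(B,{}^tB^{-1})$-type correction'' you invoke is the extension mechanism for an automorphism of a \emph{Lagrangian}, which $R^q$ is not; there is no reason such a block-diagonal matrix lies in $\Sp_{2n}(R)$ here. Your observation that any $g\in\Sp_{2n}(R)$ with $gv=u$ is block-triangular in the standard basis is correct, but by itself it does not control the lower block $C$ or the off-diagonal block, and it does not show that $c_{\det B}$ is realized by conjugation by an element of $\Sp_{2n}(R)$ fixing $u$ pointwise. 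The paper's actual argument instead passes to the genuinely symplectic subspace $R^{q+1}$: it adjoins carefully chosen vectors $x,y\in R^{q+1}$ with $\Gamma(u,x)=\Gamma(v,y)$, forms $B=(v,y)(u,x)^{-1}\in\Sp_{q+1}(R)$ (hence $\det B=1$), conjugates by explicit diagonal symplectic matrices encoding $\det u$ and $\det v$ to bring $B$ into a prescribed parabolic shape, and then verifies by a direct block-matrix computation that this corrected element commutes with every element of $\eps(\Sp_{2n-q}(R))$. That computation, not an abstract normalizer argument, is exactly the verification that ``the determinant twist absorbs the change of basis''; until you produce it (or an equivalent), the proposal does not close.
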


\begin{proof}
If $q$ is even, then the $R$-linear automorphism $B$ of $R^q$ sending $u$ to $v$ is an isometry, since $\Gamma(u)=\Gamma(v)$.
We extend $B$ to an isometry of $R^{2n}$ by requiring $B e_i=e_i$ for $i=q+1,...,2n$.
Since $B\in \Sp_{2n}(R)$ commutes with every element of $\Sp_{2n-q}(R)$, we have $f_u=f_v$.

Assume now $q=2r+1$ odd, $0 \leq r < n$.
We consider $u,v$ as elements in $\GL_q(R)$.
There are unique vectors $x,y\in R^{q+1}$ such that
$$\left(\begin{array}{c|c}{^t}u& 0 \\ \hline 0 & 1\end{array}\right)\psi_{q+1}\ x=\left(\begin{array}{c|c}{^t}v& 0 \\ \hline 0 & 1\end{array}\right)\psi_{q+1}\ y = e_q\in R^{q+1}$$
since the $(q+1)\times (q+1)$ matrices involved are invertible.
Then $\Gamma(u,x)=\Gamma(v,y)$, by definition of $x$ and $y$.
We show that $(v,y)$ is a basis of $R^{q+1}$.
Indeed, let $V \subset R^{q+1}$ be the $R$-span of $v_1,...,v_{q-1}$.
Then $V$ equipped with the symplectic form $\langle\phantom{X},\phantom{Y}\rangle$ is non-degenerate since the Gram matrix of $v_1,...,v_{q-1}$ is invertible.
Therefore, there is a unique $w\in V$ such that $\langle w,v_i\rangle = \langle v_q,v_i\rangle$ for all $i=1,...,q-1$.
In the orthogonal decomposition $V \perp V^{\perp}=R^{q+1}$ of $R^{q+1}$, the vectors $v_q-w,y\in V^{\perp}$ are a hyperbolic basis of $V^{\perp}$ since $\Gamma(v_q-w,y)=\psi_2$.
It follows that $(v_1,...,v_{q-1},v_q-w,x)$ is a basis of $R^{q+1}$, hence $(v_1,...,v_{q-1},v_q,x)$ is a basis of $R^{q+1}$.
Similarly, $(u,x)$ is also a basis of $R^{q+1}$.
The $R$-linear endomorphism $B = (v,y)\circ (u,x)^{-1}:R^{q+1} \to R^{q+1}$ sending $(u,x)$ to $(v,y)$ is an isometry and thus has determinant $1$ as $\Sp_{q+1}(R)\subset \SL_{q+1}(R)$.
Since $u,v\in \GL_q(R)$, the matrices $(u,x)$ and $(v,y)$ have the form
$$(u,x) = \left(\begin{array}{c|c}u& \ast \\ \hline 0 & x_0\end{array}\right)\hspace{2ex}\text{ and }\hspace{2ex}(v,y)=\left(\begin{array}{c|c}v& \ast \\ \hline 0 & y_0\end{array}\right).$$
Thus, $x_0\det u = \det(u,x) = \det(v,y) = y_0 \det v$, and the matrix 
$$ \left(\begin{smallmatrix}1_{2r} & 0 & 0  \\ 0 & \det^{-1}  v& 0 \\ 0 & 0 & \det v  \end{smallmatrix}\right) B \left(\begin{smallmatrix}1_{2r} & 0 & 0\\ 0 & \det u & 0\\ 0 & 0 & \det^{-1}  u\end{smallmatrix}\right) \in \Sp_{q+1}(R)$$
has last row equal to $^te_{q+1}=(0,0,...,0,1)$.
In particular, that matrix has the form
$$ \left(\begin{smallmatrix}P& 0 & g  \\ ^th & 1& g_0 \\ 0 & 0 & 1\end{smallmatrix}\right)$$
for some $g,h\in R^{q-1}$, $g_0\in R$, and $P\in \Sp_{q-1}(R)$.
Now we extend the isometry $B$ of $R^{q+1}$ to all of $R^{2n}$ by requiring $Be_i=e_i$ for $i = q+2,...,2n$.
Then $Bu=v$, $B\in \Sp_{2n}(R)$, and for all $M \in \Sp_{2n-q}(R)$ we have $c_{\det v}(M) = B \circ c_{\det u}(M)\circ B^{-1}$ since
$$ \left(\begin{smallmatrix}1_{2r} & 0 & 0 & 0 \\ 0 & \det^{-1}  v& 0  & 0\\ 0 & 0 & \det v & 0 \\ 0 & 0 & 0 & 1_{2n-q-1}\end{smallmatrix}\right) B \left(\begin{smallmatrix}1_{2r} & 0 & 0 &0\\ 0 & \det u & 0 &0\\ 0 & 0 & \det^{-1}  u& 0 \\ 0 & 0 & 0 & 1_{2n-q-1}\end{smallmatrix}\right) = \left(\begin{smallmatrix}P& 0 & g & 0 \\ ^th & 1& g_0  & 0\\ 0 & 0 & 1 & 0 \\ 0 & 0 & 0 & 1_{2n-q-1}\end{smallmatrix}\right).$$
Any such matrix commutes with every matrix in $\Sp_{2n-q}(R)$ because
$$\left(\begin{smallmatrix} P & 0 & g & 0 \\ ^th & 1 & g_0 & 0 \\ 0 & 0 & 1 & 0\\ 0 & 0 & 0 & 1\end{smallmatrix}\right)
\left(\begin{smallmatrix} 1 & 0 & 0 & 0 \\ 0 & 1 & b_0 & ^tb \\ 0 & 0 & 1 & 0 \\ 0 & 0 & a & N\end{smallmatrix}\right)
=
\left(\begin{smallmatrix} P & 0 & g & 0 \\ ^th & 1 & g_0+b_0 & ^tb \\ 0 & 0 & 1 & 0\\ 0 & 0 & a & N\end{smallmatrix}\right)
=
\left(\begin{smallmatrix} 1 & 0 & 0 & 0 \\ 0 & 1 & b_0 & ^tb \\ 0 & 0 & 1 & 0 \\ 0 & 0 & a & N\end{smallmatrix}\right)
\left(\begin{smallmatrix} P & 0 & g & 0 \\ ^th & 1 & g_0 & 0 \\ 0 & 0 & 1 & 0\\ 0 & 0 & 0 & 1\end{smallmatrix}\right)
$$
for all $a,b\in R^{2n-q-1}$, $b_0\in R$, $N\in M_{2n-q-1}(R)$.
This finishes the proof.
\end{proof}

\begin{corollary}
\label{cor:E1IdentWellDef}
For $0 \leq q \leq 2n$, the following map, sending $\alpha \otimes A$ to $f_v(\alpha)$, does not depend on the choice of $v$ and is an isomorphism
\begin{equation}
\label{eqn:E1ShapiroIdentn}
H_p(\Sp_{2n-q}(R)) \otimes_{\Z} \Z[\Skew^+_q(R)] \stackrel{\cong}{\longrightarrow} H_p(\Sp_{2n}(R);\Z[U_q(R^{2n})])
\end{equation}
provided $v\in U_q(R^{2n})$ with $\Gamma(v)=A$ and $v$ generates $R^q$.
\end{corollary}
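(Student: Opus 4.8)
The plan is to prove both assertions --- independence of the choice of $v$ and bijectivity --- by combining the orbit description of $U_q(R^{2n})$ with Shapiro's lemma. Independence is immediate from the earlier lemmas: by Lemma~\ref{lem:normalForm} every $A\in\Skew^+_q(R)$ admits a normal form $v\in U_q(R^{2n})$ with $\Gamma(v)=A$, and such a $v$ spans $R^q$; by Lemma~\ref{lem:vIndep} the map $f_v$ depends only on $\Gamma(v)$ among all $v$ spanning $R^q$. Hence $\alpha\otimes A\mapsto f_v(\alpha)$ is a well-defined homomorphism, which for $q=0$ is visibly the identity, so we may assume $1\le q\le 2n$.

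For bijectivity I would first fix, for each $A\in\Skew^+_q(R)$, a normal form $v_A$ of $A$. By Lemma~\ref{lem:GammaBij} the assignment $v\mapsto\Gamma(v)$ identifies the $\Sp_{2n}(R)$-orbits in $U_q(R^{2n})$ with $\Skew^+_q(R)$, so as $\Sp_{2n}(R)$-modules
$$\Z[U_q(R^{2n})]=\bigoplus_{A\in\Skew^+_q(R)}\Z[\Sp_{2n}(R)\cdot v_A]=\bigoplus_{A\in\Skew^+_q(R)}\Z\bigl[\Sp_{2n}(R)/\Stab(v_A)\bigr].$$
Applying $H_p(\Sp_{2n}(R);-)$, which commutes with arbitrary direct sums of coefficient modules, and then Shapiro's lemma, we obtain a direct sum decomposition
$$H_p(\Sp_{2n}(R);\Z[U_q(R^{2n})])\;\cong\;\bigoplus_{A\in\Skew^+_q(R)}H_p(\Stab(v_A)).$$

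The crucial step is to identify the stabilizer. Since $v_A$ has length $q\le 2n$ and spans $R^q$, it is a basis of $R^q$, so $\Stab(v_A)=\{B\in\Sp_{2n}(R)\mid B|_{R^q}=\mathrm{id}\}$, a subgroup depending only on the submodule $R^q\subset R^{2n}$. I would show it equals the image of the standard embedding $\eps\colon\Sp_{2n-q}(R)\hookrightarrow\Sp_{2n}(R)$. For $q$ even this is painless: $R^q$ is a nondegenerate symplectic submodule with orthogonal complement the standard $R^{2n-q}$ spanned by $e_{q+1},\dots,e_{2n}$, and an isometry fixing $R^q$ pointwise is exactly one of the form $\left(\begin{smallmatrix}1_q&0\\0&M\end{smallmatrix}\right)$ with $M\in\Sp_{2n-q}(R)$. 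For $q$ odd the submodule $R^q$ is degenerate, with radical $Re_q$, so there is no orthogonal-complement shortcut; instead I would argue directly from the symplectic relations that fixing $e_1,\dots,e_q$ forces columns $q+1,\dots,2n$ of $B$ into the block shape of (\ref{eqn:GSpodd}) that characterizes membership in $\eps(\Sp_{2n-q}(R))$, and conversely. This matrix bookkeeping in the odd case is the main obstacle; everything preceding and following it is formal.

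Granting the identification $\Stab(v_A)=\eps(\Sp_{2n-q}(R))$, the Shapiro isomorphism $H_p(\Sp_{2n-q}(R))\xrightarrow{\ \cong\ }H_p(\Sp_{2n}(R);\Z[\Sp_{2n}(R)\cdot v_A])$ is, by construction, the map $(\eps,v_A)_*$ attached to the pair consisting of $\eps$ and $1\mapsto v_A$. For $q$ even, $(\eps,v_A)_*=f_{v_A}=f_v$, so summing over $A$ shows that $\alpha\otimes A\mapsto f_v(\alpha)$ is an isomorphism $H_p(\Sp_{2n-q}(R))\otimes_\Z\Z[\Skew^+_q(R)]\xrightarrow{\ \cong\ }H_p(\Sp_{2n}(R);\Z[U_q(R^{2n})])$. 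For $q$ odd, factoring the pair $(\eps\circ c_{\det v_A},v_A)$ as $(c_{\det v_A},\mathrm{id}_{\Z})$ followed by $(\eps,v_A)$ gives $f_{v_A}=(\eps,v_A)_*\circ(c_{\det v_A})_*$; since $c_{\det v_A}$ is conjugation by $D\in\Sp_{2n-q+1}(R)$, which normalizes $\Sp_{2n-q}(R)$, the map $(c_{\det v_A})_*$ is an automorphism of $H_p(\Sp_{2n-q}(R))$, so $f_{v_A}$ is still an isomorphism onto the $A$-summand. Summing over $A$ and invoking Lemma~\ref{lem:vIndep} to replace $v_A$ by an arbitrary $v$ with $\Gamma(v)=A$ spanning $R^q$ then completes the proof.
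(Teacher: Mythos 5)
Your proof is correct and takes essentially the same approach as the paper, which compresses the entire argument into one line citing Lemma~\ref{lem:vIndep} for well-definedness and ``Shapiro's isomorphism in view of Lemma~\ref{lem:GammaBij}'' for bijectivity. You have spelled out exactly what that citation means: the orbit decomposition of $\Z[U_q(R^{2n})]$, the identification of $\Stab(v_A)$ with $\eps(\Sp_{2n-q}(R))$ (which works in both parities once one notes that fixing $e_1,\dots,e_q$ pointwise forces the block-diagonal shape), and, for $q$ odd, the reconciliation of $f_v=(\eps\circ c_{\det v},v)_*$ with the Shapiro map $(\eps,v)_*$ via the automorphism $(c_{\det v})_*$ of the source.
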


\begin{proof}
The map does not depend on the choice of $v$, by Lemma \ref{lem:vIndep}.
It is an isomorphism, by Shapiro's isomorphism in view of Lemma \ref{lem:GammaBij}.
\end{proof}

The following lemma identifies the $E^1$-page of the spectral sequence (\ref{eqn:E1spseq}) and its $d^1$ differential.

\begin{lemma}
\label{lem:commdiag}
	For $0 \leq q < 2n$ the following diagram commutes
$$
\xymatrix{
	H_*(\Sp_{2n-q-1}; \Z) \otimes_{\Z} \Z[\Skew_{q+1}^+] \ar[rr]^{\hspace{5ex}\text{(\ref{eqn:E1ShapiroIdentn})}}_{\hspace{5ex}\cong}
	\ar@{->}[d]_{\eps_*\otimes d}
	&& H_*(\Sp_{2n}; \Z[U_{q+1}(R^{2n})])
	\ar@{->}[d]^{(1,d)_*}
	\\
	H_*(\Sp_{2n-q}; \Z)\otimes_{\Z} \Z[\Skew_q^+]   \ar[rr]^{\hspace{7ex}\text{(\ref{eqn:E1ShapiroIdentn})}}_{\hspace{7ex}\cong}
	&&H_*(\Sp_{2n}; \Z[U_q(R^{2n})]).
}
$$
\end{lemma}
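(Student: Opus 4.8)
The plan is to unwind both vertical maps explicitly on the level of the defining maps $f_v$ from the previous pages, and then match them. Fix $A\in\Skew_{q+1}^+(R)$ and a normal form $u=(u_1,\dots,u_{q+1})\in U_{q+1}(R^{2n})$ of $A$ that spans $R^{q+1}$ (Lemma \ref{lem:normalForm}); this is legitimate since by Corollary \ref{cor:E1IdentWellDef} the isomorphism (\ref{eqn:E1ShapiroIdentn}) on the top row sends $\alpha\otimes A$ to $f_u(\alpha)$. Applying $(1,d)_*$ gives $\sum_{i=1}^{q+1}(-1)^{i+1}(1,d_i)_*f_u(\alpha)$, where $(1,d_i)_*$ is induced by the identity on $\Sp_{2n}(R)$ and the map $u\mapsto u^{\wedge}_i$ on coefficients. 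On the other side, $d A=\sum_i(-1)^{i+1}A^{\wedge}_i$, and one checks that $u^{\wedge}_i$ — the sequence $u$ with the $i$-th vector removed — is a non-degenerate unimodular sequence in $U_q(R^{2n})$ with Gram matrix $A^{\wedge}_i$. So the claim reduces, term by term in $i$, to the identity
\[
(1,d_i)_*\circ f_u \;=\; f_{u^{\wedge}_i}\circ \eps_*
\]
as maps $H_*(\Sp_{2n-q-1};\Z)\to H_*(\Sp_{2n};\Z[U_q(R^{2n})])$, where on the left $\eps$ is the inclusion $\Sp_{2n-q-1}\subset\Sp_{2n-q}$.

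The heart of the argument is therefore this single commutation, which I would prove by the functoriality of group homology recalled in \cite[III.8]{Br82}: a commuting square of a group homomorphism and an equivariant map of modules induces a commuting square on $H_*$. The issue is that $u^{\wedge}_i$ need not be in normal form, and, more seriously, that $u^{\wedge}_i$ does not span $R^q$ in general (deleting an interior vector of a normal-form sequence destroys the triangular shape). By Lemma \ref{lem:vIndep}, however, $f_{u^{\wedge}_i}$ only depends on the Gram matrix $A^{\wedge}_i$, so I am free to replace $u^{\wedge}_i$ by \emph{any} convenient non-degenerate unimodular sequence $w\in U_q(R^{2n})$ spanning $R^q$ with $\Gamma(w)=A^{\wedge}_i$, say a normal form of $A^{\wedge}_i$. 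The plan is then: (a) pick an element $g_i\in\Sp_{2n}(R)$ with $g_i\cdot u^{\wedge}_i = w$ (such $g_i$ exists by Lemma \ref{lem:GammaBij} since the two sequences have the same Gram matrix $A^{\wedge}_i$); (b) note that $(1,d_i)_*f_u(\alpha) = (g_i)_*(1,d_i)_*f_u(\alpha)$ because $(g_i)_*$ is the identity on $H_*(\Sp_{2n}(R);-)$ (it is induced by conjugation by $g_i$ on the group together with the $g_i$-action on coefficients, hence the identity on homology); (c) compare the two outer composites with the reference embedding $\Sp_{2n-q-1}\subset\Sp_{2n}$.

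For step (c) the bookkeeping splits into cases according to the parities of $q$ and $q+1$, exactly as in the definition of $f_v$ and in the proof of Lemma \ref{lem:vIndep}: when $q+1$ is odd one has the twist by $c_{\det u}$ on the source group $\Sp_{2n-q-1}$, and deleting $u_i$ changes both the rank-parity and the determinant twist, so one must track that the conjugating matrix $g_i$ restricted to the complementary block $\Sp_{2n-q-1}(R)$ realizes precisely the comparison of twists $c_{\det u}$ versus $\eps$, or $\eps$ versus $c_{\det(u^{\wedge}_i)}$; when $q+1$ is even this twist is absent and the matching is direct. This is essentially the same diagonal-matrix computation that appears at the end of the proof of Lemma \ref{lem:vIndep}, and I expect \textbf{this parity-and-determinant bookkeeping to be the main obstacle}: not conceptually hard, but requiring care to see that all the conjugations assemble into the identity on $H_*(\Sp_{2n}(R);\Z[U_q(R^{2n})])$ and leave exactly the prescribed composite $f_{u^{\wedge}_i}\circ\eps_*$. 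Once each of the $q+1$ squares commutes, summing with signs $(-1)^{i+1}$ gives the commuting square of the lemma.
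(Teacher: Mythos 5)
Your overall strategy matches the paper's: reduce to a term-by-term commutativity with each $d_i$ in place of $d$, identify the horizontal arrows via Corollary \ref{cor:E1IdentWellDef}, and sort through the parities. But there is a genuine divergence at the technical core, and the paper's move is simpler than what you propose. You fix a normal form $u$ of $A$ once at the start, observe (correctly) that $u^{\wedge}_i$ need not span $R^q$, and propose to repair this by producing a $g_i\in\Sp_{2n}(R)$ carrying $u^{\wedge}_i$ to a spanning $w$ and exploiting that $(g_i)_*$ is the identity on $H_*(\Sp_{2n};\Z[U_q(R^{2n})])$. That can be made to work, but as you note the real content is then step (c): showing that the twisted embedding $c_{g_i}\circ\eps\circ c_{\det u}$ and the expected one $\eps\circ\eps$ (or with the appropriate odd-rank twist) give the same map on homology with coefficients in $\Z[U_q]$ supported at $w$ — and that is essentially a re-run of the proof of Lemma \ref{lem:vIndep}. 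The paper avoids all of this by choosing $u$ \emph{depending on $i$}: for each $i$ pick $u\in U_{q+1}(R^{2n})$ spanning $R^{q+1}$ with $\Gamma(u)=A$ \emph{and} such that $d_iu$ already spans $R^q$ (take $u$ so that the permuted sequence $(d_iu,u_i)$ is in normal form; Lemma \ref{lem:normalForm} produces it). This is legitimate precisely because Lemma \ref{lem:vIndep} says (\ref{eqn:E1ShapiroIdentn}) does not depend on which spanning representative one uses. With this choice both $f_u$ and $f_{d_iu}$ are directly defined, no auxiliary $g_i$ appears, and the parity bookkeeping collapses to the observations that inner conjugation by $D\in\Sp_{2n-q}(R)$ acts trivially on $H_*(\Sp_{2n-q})$ (case $q$ even) and that $c_{\det d_iu}\circ\eps^{2n-q}_{2n-q-1}=\eps^{2n-q}_{2n-q-1}$ on the nose (case $q$ odd). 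One small slip in your write-up: "$f_{u^{\wedge}_i}$ only depends on the Gram matrix" is not well posed — $f_{u^{\wedge}_i}$ is literally undefined when $u^{\wedge}_i$ does not span $R^q$, since there is no $\det u^{\wedge}_i$ to feed into the odd-rank twist; the correct statement is about the map (\ref{eqn:E1ShapiroIdentn}) being independent of the spanning representative, which the paper's per-$i$ choice exploits from the start.
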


\begin{proof}
Recall that $d = \sum_{i=1}^{q+1} (-1)^{i+1}d_i$ where $d_i$ omits the $i$-th entry.
We will show that the diagram commutes with $d_i$ in place of $d$ for $i=1,...,q+1$.
On the component of the upper left corner corresponding to $A\in \Skew^+_{q+1}(R)$ choose $u \in U_{q+1}(R^{2n})$ generating $R^{q+1}$ such that $\Gamma(u)=A$ and $d_iu$ generates $R^{q}$, 
for instance, $(d_iu,u_i)$ in normal form will do; see Lemma \ref{lem:normalForm}.
Then $\Gamma(d_iu) = d_i A$.
In view of Lemma \ref{lem:vIndep} we can use $f_u$ and $f_{d_iu}$ for the horizontal maps.

If $q$ is even then $q+1$ is odd and
going first right then down gives the map 
$$
\renewcommand\arraystretch{2}
\begin{array}{rl}
&(\eps^{2n}_{2n-q-1}\circ c_a, d_iu)_*\\
=& (\eps^{2n}_{2n-q}, d_iu)_* \circ (c_a \circ \eps_{2n-q-1}^{2n-q})= (\eps,d_iu)_*
\end{array}$$
where $c_a$ is conjugation with the diagonal matrix $D$ in $\Sp_{2n-q}(R)$ whose diagonal entries are $(a,a^{-1},1_{2n-q-2})$ and $a=\det u$.
Conjugation with any $D\in \Sp_{2n-q}(R)$ is the identity on $H_*(\Sp_{2n-q}(R);\Z)$.
Thus, this map equals the map obtained by going down then right.

If $q$ is odd, then $q+1$ is even and  going right then down is $(\eps, d_iu)$ whereas going down then right is 
$(\eps^{2n}_{2n-q}\circ c_a \circ \eps^{2n-q}_{2n-q-1}, d_iu) = (\eps^{2n}_{2n-q-1}, d_iu) $ since $\ c_a \eps^{2n-q}_{2n-q-1}= \eps^{2n-q}_{2n-q-1}$ where $c_a$ is conjugation with the diagonal matrix
$(a,a^{-1},1_{2n-q-1})$ of $\Sp_{2n-q+1}(R)$ and $a=\det d_iu$.
\end{proof}

\section{The Limit Theorem}
\label{sec:LimitThm}

The goal of this section is to prove the Limit Theorem \ref{thm:LimitThm} which is fundamental in our proof of degeneration of the spectral sequence (\ref{eqn:E1spseq}) in Section \ref{sec:E2degeneration}.
\vspace{1ex}

Let $R$ be a commutative ring (which, for now, need not be local).
An $R$-module $M$ carries a left action $R \times M \to M: (a,x) \mapsto ax$ of the multiplicative monoid $(R,\cdot,1)$ of $R$ which is linear in $M$.
In particular, it is a module over the associated integral monoid ring $\Z[R]=\Z[R,\cdot,1]$.
We denote by $\langle a \rangle$ the element of $\Z[R]$ corresponding to $a\in R$
and note that $\Z\langle 0 \rangle \subset \Z[R]$ is an ideal.
Since $0\cdot M = 0$, the $R$-module $M$ is naturally a module over the quotient ring 
$$\Z_0[R]=\Z[R]/\Z\langle 0 \rangle = \Z[R,\cdot,1]/\Z\langle 0 \rangle.$$
By functoriality, the multiplicative action of $R$ on $M$ induces a multiplicative action on $H_q(M)$, $M^{\otimes_{\Z} q}$, $\Lambda_{\Z}^qM$, and $M(q)$ where the latter is $M$ with action through the $q$-th power of its natural action. 
For $q\geq 1$, all those modules are therefore $\Z_0[R]$-modules. 
For instance, for $q\geq 1$, the $\Z_0[R]$-module structure on 
\begin{equation}
\label{eqn:M(q)dfn}
M(q)\hspace{6ex} \text{ is } \hspace{6ex} \left(\sum_{i=1}^rn_i\langle a_i\rangle \right) \cdot x = \sum_{i=1}^rn_i a^q_i \, x.
\end{equation}

A $\Z_0[R]$-module $M$ is an $R$-module if and only if the multiplicative left action of $R$ on $M$ is also linear in $R$, that is, if for all $a,b\in R$, the element $\langle a\rangle + \langle b \rangle - \langle a+b \rangle$ acts as zero on $M$.
We may call such $\Z_0[R]$-modules {\em linear}.
The criterion for linearity is the $m=2$-case of the following generalisation.
For a sequence $x=(x_1,....,x_m)$ of $m$ elements in $R$, and subset $J \subset \{1,...,m\}$ we denote by $x_J$ the partial sum
$$x_J = \sum_{j\in J} x_j \in R.$$
Then a $\Z_0[R]$-module $M$ is linear if and only if the element
$$-\sum_{\emptyset \neq J \subset \{1,...,m\}} (-1)^{|J|}\ \langle x_J\rangle \hspace{3ex}\in \hspace{2ex} \Z_0[R]$$
acts as zero on $M$ for all $m\geq 2$ and all sequences $x=(x_1,....,x_m)$ of $m$ elements in $R$.
More generally, we have the following.
Our convention is that $x^0=1$ for $x\in R$ even if $x=0$.

\begin{lemma}
\label{lem:MultLin}
Let $R$ be a ring, $M$ an abelian group and let $t\geq 1$ be an integer.
Let 
$$[ \phantom{x,...,y}]:R^{\times t} \to M: (a_1,...,a_t) \mapsto [a_1,...,a_t]$$ be a $\Z$-multilinear map.
Let $x=(x_1,...,x_m)$ be a sequence of $m\geq 1$ elements in $R$.
Let $p_1(X),...,p_t(X) \in R[X]$ be polynomials of degrees $\gamma_1,...,\gamma_t\geq 0$ with $\gamma_1+\cdots + \gamma_t <m$.
Then 
\begin{equation}
\label{eqn:lem:MultLin}
-\sum_{\emptyset \neq J \subset \{1,...,m\}} (-1)^{|J|}\ [p_1(x_J), \cdots, p_t(x_J)] =
 [p_1(0), \dots , p_t(0)].
 \end{equation}
\end{lemma}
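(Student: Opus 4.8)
The plan is to expand each $p_i(x_J)$ multilinearly and reduce the statement to a purely combinatorial identity about alternating sums of multisubsets. First I would write $p_i(X) = \sum_{k=0}^{\gamma_i} c_{i,k} X^k$ with $c_{i,k}\in R$. By $\Z$-multilinearity of $[\ ,\dots,\ ]$, the left-hand side of (\ref{eqn:lem:MultLin}) becomes
\[
-\sum_{\emptyset\neq J\subset\{1,\dots,m\}}(-1)^{|J|}\sum_{k_1=0}^{\gamma_1}\cdots\sum_{k_t=0}^{\gamma_t}[c_{1,k_1}x_J^{k_1},\dots,c_{t,k_t}x_J^{k_t}]
= -\sum_{k_1,\dots,k_t}[c_{1,k_1},\dots,c_{t,k_t}]\cdot\Big(\sum_{\emptyset\neq J}(-1)^{|J|}x_J^{\,k_1+\cdots+k_t}\Big),
\]
where I have pulled the scalars $c_{i,k_i}$ out of each slot (again using multilinearity) and collected the coefficient of $[c_{1,k_1},\dots,c_{t,k_t}]$. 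Here $k_1+\cdots+k_t\le\gamma_1+\cdots+\gamma_t<m$ always. So everything reduces to the single identity
\[
-\sum_{\emptyset\neq J\subset\{1,\dots,m\}}(-1)^{|J|}x_J^{\,d} \;=\; \begin{cases} 1 & d=0,\\ 0 & 1\le d<m,\end{cases}
\]
for elements $x_1,\dots,x_m$ of a commutative ring $R$; matching this against the right-hand side $[p_1(0),\dots,p_t(0)]=[c_{1,0},\dots,c_{t,0}]$ (the $d=0$, i.e. $k_1=\cdots=k_t=0$, term) then finishes the proof.

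The core identity itself I would prove by the multinomial theorem. For $d=0$ the empty-power convention gives $x_J^0=1$, so the sum is $-\sum_{\emptyset\neq J}(-1)^{|J|}=-\big((1-1)^m-1\big)=1$ by the binomial theorem. For $1\le d<m$, expand $x_J^d=\big(\sum_{j\in J}x_j\big)^d=\sum_{\alpha}\binom{d}{\alpha}\prod_j x_j^{\alpha_j}$ over compositions $\alpha=(\alpha_j)_{j\in\{1,\dots,m\}}$ supported on $J$ with $|\alpha|=d$. Swapping the order of summation, the coefficient of a fixed monomial $\prod_j x_j^{\alpha_j}$ (with support $S=\{j:\alpha_j>0\}$, so $|S|\le d<m$) is $\binom{d}{\alpha}$ times $\sum_{J\supseteq S}(-1)^{|J|}$, the sum taken over nonempty $J\subset\{1,\dots,m\}$ — but since $S\neq\{1,\dots,m\}$, writing $J=S\sqcup J'$ with $J'\subset\{1,\dots,m\}\setminus S$ (which is nonempty) gives $\sum_{J\supseteq S}(-1)^{|J|}=(-1)^{|S|}\sum_{J'}(-1)^{|J'|}=(-1)^{|S|}(1-1)^{m-|S|}=0$. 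Hence the whole sum vanishes.

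The only genuinely delicate point is bookkeeping: I must make sure the multilinear expansion is carried out \emph{one slot at a time} so that each scalar $c_{i,k_i}\in R$ is extracted from its own argument position (this is exactly $\Z$-multilinearity, not $R$-multilinearity, which is all that is assumed), and I must handle the convention $x^0=1$ for $x=0$ consistently — this is precisely what makes the $d=0$ term survive and produce $[p_1(0),\dots,p_t(0)]$ on the nose. No other step presents an obstacle; the degree hypothesis $\gamma_1+\cdots+\gamma_t<m$ is used exactly once, to guarantee $k_1+\cdots+k_t<m$ so that the core combinatorial identity applies to every term.
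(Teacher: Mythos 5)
There is a genuine gap in your reduction step. The equality
\[
-\sum_{\emptyset\neq J\subset\{1,\dots,m\}}(-1)^{|J|}\sum_{k_1,\dots,k_t}[c_{1,k_1}x_J^{k_1},\dots,c_{t,k_t}x_J^{k_t}]
\;=\;
-\sum_{k_1,\dots,k_t}[c_{1,k_1},\dots,c_{t,k_t}]\cdot\Big(\sum_{\emptyset\neq J}(-1)^{|J|}x_J^{\,k_1+\cdots+k_t}\Big),
\]
which you attribute to $\Z$-multilinearity, is not valid. $\Z$-multilinearity only lets you extract \emph{integer} scalars from the slots of the bracket; $x_J^{k_i}\in R$ is a ring element, not an integer, and cannot be pulled out. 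Moreover the right-hand side does not even typecheck: $[c_{1,k_1},\dots,c_{t,k_t}]$ lies in the abelian group $M$, which is not assumed to be an $R$-module, so the product with $x_J^{k_1+\cdots+k_t}\in R$ is undefined. Your parenthetical flags the $\Z$-versus-$R$-multilinearity distinction as the one delicate point, but this is exactly where the argument breaks; nothing in the hypotheses lets you convert the bracket into an $R$-linear quantity.

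The fix --- and this is what the paper's proof does --- is to keep every ring element inside the bracket throughout. Fix a tuple $(k_1,\dots,k_t)$ with $1\le k_1+\cdots+k_t<m$ and expand each power $x_J^{k_i}=\big(\sum_{j\in J}x_j\big)^{k_i}$ as a sum of products $x_{\sigma_i(1)}\cdots x_{\sigma_i(k_i)}$ over functions $\sigma_i:\{1,\dots,k_i\}\to J$. Additivity in each slot then writes $[c_{1,k_1}x_J^{k_1},\dots,c_{t,k_t}x_J^{k_t}]$ as a sum over all tuples $(\sigma_1,\dots,\sigma_t)$ of brackets $[c_{1,k_1}x_{\sigma_1(1)}\cdots x_{\sigma_1(k_1)},\dots,c_{t,k_t}x_{\sigma_t(1)}\cdots x_{\sigma_t(k_t)}]$. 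Swapping the sum over $J$ with the sum over $(\sigma_1,\dots,\sigma_t)$, the bracket attached to a fixed tuple is independent of $J$, and its coefficient is $\sum_{J\supset S}(-1)^{|J|}$ where $S=\bigcup_i\operatorname{im}\sigma_i$; this vanishes because $\emptyset\neq S\subsetneq\{1,\dots,m\}$ (here the degree hypothesis enters as $|S|\le k_1+\cdots+k_t<m$). Your binomial computation of the scalar identity is exactly this combinatorics, but it must be carried out with the bracket in place; the scalar identity by itself has no bearing on $M$ when $M$ is merely an abelian group.
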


\begin{proof}
We first prove the lemma for $p_i(X)=a_iX^{\gamma_i}$, $a_i\in R$.
If $\gamma_1=\cdots =\gamma_t=0$ then the left term in (\ref{eqn:lem:MultLin}) is $[a_1,...,a_t]=[p_1(0), \dots , p_t(0)]$ because 
$$1+\displaystyle{\sum_{\emptyset \neq J \subset \{1,...,m\}} (-1)^{|J|}} = \displaystyle{\sum_{J \subset \{1,...,m\}} (-1)^{|J|}} = (1-1)^m =0$$
for $m\geq 1$.

If $\gamma_1+ \cdots + \gamma_t\geq 1$ 
we write $[n]$ for the set $\{1,...,n\}$. 
Then the left term in (\ref{eqn:lem:MultLin}) is
$$\renewcommand\arraystretch{2.5}
\begin{array}{rcl}
&&\displaystyle{\sum_{\emptyset \neq J \subset \{1,...,m\}} (-1)^{|J|}\ [a_1(x_J)^{\gamma_1}, \cdots, a_t(x_J)^{\gamma_t}]}\\
&=&
\displaystyle{\sum_{\emptyset \neq J \subset [m], \sigma_i:[\gamma_i] \to J, 1 \leq i \leq t} (-1)^{|J|}}\ 
[a_1 x_{\sigma_1(1)}\cdots x_{\sigma_1(\gamma_1)}, \dots, a_t x_{\sigma_t(1)}\cdots x_{\sigma_t(\gamma_t)]}\\
&=&
\displaystyle{\sum_{\sigma_i:[\gamma_i] \to [m], 1 \leq i \leq t}  [a_1 x_{\sigma_1(1)}\cdots x_{\sigma_1(\gamma_1)}, \dots ,a_t x_{\sigma_t(1)}\cdots x_{\sigma_t(\gamma_t)}]\sum_{\bigcup_{i=1}^{t} \im(\gamma_i)\subset J \subset [m]}  (-1)^{|J|}}\\
&=& 0
\end{array}
$$
since $\emptyset \neq \bigcup_{i=1}^{t} \im(\gamma_i) \subsetneq [m]$ as $1 \leq \gamma_1+ \cdots + \gamma_t<m$, and
for $S \subsetneq [m]$ we have
$$\sum_{S\subset J \subset [m]}  (-1)^{|J|} =(-1)^{|S|} \sum_{J \subset [m]-S}(-1)^{|J|} = (-1)^{|S|}(1-1)^{m-|S|} =0.$$

Now we assume that $p_1(X),...,p_t(X) \in R[X]$ are arbitrary polynomials of degrees $\gamma_1,...,\gamma_t\geq 0$ with $\gamma_1+\cdots + \gamma_t <m$.
Each polynomial $p(X)$ is the sum of $p(0)$ and a $\Z$-linear combination of polyomials $a_{\gamma}X^{\gamma}$ with $\gamma \geq 1$ and $a_{\gamma}\in R$.
It follows that the left term of (\ref{eqn:lem:MultLin}) is the sum of
\begin{equation}
\label{eqn:ZLinCombInv2}
-\displaystyle{\sum_{\emptyset \neq J \subset \{1,...,m\}} (-1)^{|J|} \ [p_1(0),\dots, p_t(0)]}
\end{equation}
and a $\Z$-linear combination of terms
\begin{equation}
\label{eqn:ZLinCombInv}
-\sum_{\emptyset \neq J \subset \{1,...,m\}} (-1)^{|J|}\ [a_1(x_J)^{\delta_1}, \cdots, a_t(x_J)^{\delta_t}]
\end{equation}
for some $a_i\in R$ and where $0 \leq \delta_i $ and $1 \leq \delta_1+\cdots +\delta_t<m$.
By the first part of the proof, the terms (\ref{eqn:ZLinCombInv}) are zero and therefore, the left term of (\ref{eqn:lem:MultLin}) equals (\ref{eqn:ZLinCombInv2}) which is
$[p_1(0), \dots p_t(0)]$, again by the first part of the proof.
\end{proof}

For a sequence $a=(a_1,...,a_m)$ of $m$ elements in $R$ and a polynomial $p(X) \in R[X]$ with coefficients in $R$, we write $s_p(a) \in \Z[R]$ for the element 
$$s_p(a) = -\sum_{\emptyset \neq J \subset \{1,...,m\}} (-1)^{|J|}\ \langle p(x_J)\rangle \hspace{3ex}\in \hspace{2ex} \Z[R].$$

\begin{remark}
For $p(X)=X$, the element $s_p(a)$ was first considered in \cite{myEuler} to prove optimal homology stability for special linear groups.
Note that for $m\geq 1$
$$s_1 = -\sum_{\emptyset \neq J \subset \{1,...,m\}} (-1)^{|J|} = 1.$$
\end{remark}

\begin{definition}
\label{dfn:quasiLinear}
Let $R$ be a commutative ring. 
A $\Z_0[R]$-module $M$ is called {\em quasi-linear} if for every polynomial $p\in R[X]$ there is an integer $m_0\geq 0$ such that for all integers $m\geq m_0$ and all sequences $a=(a_1,...,a_m)$ of $m$ elements in $R$, we have
$\sigma^{-1}M=0$ where $\sigma = s_p(a)-\langle p(0)\rangle \in \Z_0[R]$.
\end{definition}

Note that the category of quasi-linear $\Z_0[R]$-modules is a Serre abelian subcategory of the abelian category of all $\Z_0[R]$-modules, that is, subobjects, quotients and extensions of quasi-linear $\Z_0[R]$-modules in the category of $\Z_0[R]$-modules are quasi-linear.

\begin{example}
By Lemma \ref{lem:MultLin}, for all $R$-modules $M$ the $\Z_0[R]$-modules
$M^{\otimes_{\Z} q}$, $\Lambda_{\Z}^qM$, and $M(q)$ are quasi-linear for all $q\geq 1$.
We will see in Proposition \ref{prop:HpLambdaM3} below that $H_s(M(q))$, $s\geq 1$, and the relative integral homology groups $H_s(M(q) \rtimes G,G)$ are quasi-linear as well if $G$ acts on $M$ by means of $R$-module homomorphisms; see Example \ref{ex:MqSemiDirect}.
\end{example}

\begin{remark}
Let $(R,\mathscr{M})$ be a local ring with infinite residue field $R/\mathscr{M}$, and consider the ring homomorphism $\Z_0[R] \to \Z$ sending $R^*$ to $1$ and $\mathscr{M}$ to $0$.
This makes $M=\Z$ into a $\Z_0[R]$-module which is not quasi-linear, in particular, $\Z_0[R]$ is not quasi-linear.
Indeed, if $p(X)=X$ then $s_p(a)$ acts as $1$ on $\Z$ for all sequences $a=(a_1,...,a_m)$ of units $a_i\in R^*$ such that $a_J\in R^*$ for all $\emptyset \neq J \subset \{1...,m\}$, and $\langle p(0)\rangle =0$ acts as $0$.
In particular $\sigma^{-1}M=M$ for all $\sigma=s_p(a)-\langle p(0)\rangle$ and all sequences 
$a=(a_1,...,a_m)$ of units in $R$ as above.
Since $R$ has infinite residue field, $m$ can be chosen as large as we want. 
\end{remark}

In order to state our Limit Theorem \ref{thm:LimitThm} we need to introduce some terminology.

\begin{definition}
Let $R$ be a local ring with infinite residue field $k$ and denote by $\pi:R \to k$ the quotient map.
A subset $\D \subset R$ of elements in $R$ is called {\em region} if $\D=\pi^{-1}\pi(\D)$.
A region $\D \subset R$ is called {\em dense} if $k-\pi(\D)$ is finite.
\end{definition}

\begin{definition}
\label{dfn:f(t)}
Let $R$ be a local ring, and $\D \subset R$ a dense region of $R$.
A function $f:\D \to \Z_0[R]$ is called {\em admissible}
if there are polynomials $P \in \Z[X_1,...,X_n]$, $P_i,Q_i \in R[X]$, $i=1,...,n$, such that $Q_i(t)\in R^*$ for all $t\in \D$ and 
\begin{equation}
\label{eqn:f(t)dfn}
f(t) = P\left(\left\langle \frac{P_1(t)}{Q_1(t)}\right\rangle, \dots, \left\langle \frac{P_n(t)}{Q_n(t)}\right\rangle\right) \in \Z_0[R]
\end{equation} 
for all $t\in \D$.
The polynomials $P,P_i,Q_i$ are called {\em presentation of $f$}.

For $a \in R$, we say that $f$ is {\em defined at $a$} (relative to the presentation $(P,P_i,Q_i)$) if the elements $Q_i(a)\in R$ are units in $R$.
Clearly, $f$ is defined at all elements of $\D$.
Note that if $f$ is defined at $a\in R$ then $f(a)$ is a well-defined element in $\Z_0[R]$, given by (\ref{eqn:f(t)dfn}), though the value $f(a)$ may depend on the presentation of $f$.
\end{definition}

\begin{definition}
\label{dfn:Lim}
Let $\D \subset R$ be a region of a local ring $R$, and 
let $f:\D \to \Z_0[R]$ be an adimissible function represented by $(P,P_i,Q_i)$ as in (\ref{eqn:f(t)dfn}).
For $a \in R\cup \{\infty\}$ we say that the {\em limit $\lim_{t\to a}f(t)$ of $f$ when $t$ tends to $a$} exists and write $$\lim_{t\to a}f(t)=L\hspace{2ex}\in \hspace{2ex}\Z_0[R]$$ if either of the following holds.
\begin{enumerate}
\item
If $a\in R$, then we require $f$ to be defined at $a$ and set $L = f(a)$.
\item
\label{dfn:Lim:item2}
If $a=\infty$, then we require $\deg P_i \leq \deg Q_i$ and the coefficients of the highest degree monomials of $Q_i(X)$ to be units, $i=1,...,n$.
Then 
$$\bar{Q}_i(X) = X^{\deg Q_i}Q_i(1/X),\hspace{3ex}\bar{P}_i(X) = X^{\deg Q_i}P_i(1/X)$$ are polynomials with $\bar{Q}_i(0) \in R^*$, $i=1,...,n$.
We note that
$$f(1/t) = \bar{f}(t) = P\left(\left\langle \frac{\bar{P}_1(t)}{\bar{Q}_1(t)}\right\rangle, \dots, \left\langle \frac{\bar{P}_n(t)}{\bar{Q}_n(t)}\right\rangle\right) \hspace{2ex} \in \hspace{2ex}  \Z_0[R]$$
for $1/t\in \D$, and that $\bar{f}$ is defined at $0$ relative to the presentation $(P,\bar{P}_i,\bar{Q}_i)$.
We set
$$L=\lim_{t\to \infty}f(t) = \lim_{t\to 0}f(1/t) = \lim_{t\to 0}\bar{f}(t) = \bar{f}(0).$$
\end{enumerate}
\end{definition}

We do not know if $\lim_t f(t)$ does or does not depend on the presentation of $f$.
For the purpose of this paper, the limit will always be calculated relative to a given presentation of $f$.

\begin{theorem}[Limit Theorem]
\label{thm:LimitThm}
Let $R$ be a local ring with infinite residue field.
Let $M$ be a quasi-linear $\Z_0[R]$-module, and let $x\in M$.
Let $\D \subset R$ be a dense region of $R$, and
let $f:\D \to \Z_0[R]$ be an admissible function with given presentation.
Assume that $f(t) \in \sqrt{\ann(x)} \subset \Z_0[R]$ for all $t\in \D$.
Then for all $a\in R \cup \{\infty\}$, 
if $\lim_{t \to a}f(t)$ exists in $\Z_0[R]$ in the given presentation then that limit satisfies
$$\lim_{t \to a}f(t) \in \sqrt{\ann(x)}.$$
\end{theorem}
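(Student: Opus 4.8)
The plan is to reduce the statement to a finiteness argument governed by quasi-linearity, handling the two cases $a \in R$ and $a = \infty$ by the same mechanism. Since $\sqrt{\ann(x)}$ is a radical ideal of $\Z_0[R]$, it suffices to find, for each prescribed exponent, a way to express a power of $\lim_{t\to a}f(t)$ as a $\Z_0[R]$-linear combination of values $f(t)$ for $t$ ranging over finitely many points of $\D$, up to an element that annihilates $x$. Concretely, fix the presentation $f(t) = P(\langle P_1(t)/Q_1(t)\rangle, \dots, \langle P_n(t)/Q_n(t)\rangle)$. In the case $a = \infty$ we first replace $f$ by $\bar f(t) = f(1/t)$ as in Definition \ref{dfn:Lim}\eqref{dfn:Lim:item2}, so that without loss of generality $a = 0$, $f$ is defined at $0$ (the $Q_i(0)$ are units), and $\D$ is still a dense region (its image under $t \mapsto 1/t$ on the units, together with whatever $\D$ already contained near $0$; one checks $0$ may be assumed in $\D$ or adjacent to it, since density only fails on a finite set). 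So the remaining task is: if $f$ is admissible with presentation defined at $0$, and $f(t)\in\sqrt{\ann(x)}$ for all $t\in\D$, then $f(0)\in\sqrt{\ann(x)}$.

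First I would expand $f(t)$ as a polynomial identity. Clearing denominators, each $Q_i(t)$ is a unit on $\D$, so in $\Z_0[R]$ we have $\langle Q_i(t)\rangle\langle Q_i(t)^{-1}\rangle = \langle 1 \rangle = s_1$, and more usefully the generators $\langle P_i(t)/Q_i(t)\rangle$ are honest elements of $\Z_0[R]$. The key observation is that the map $t \mapsto f(t) - f(0)$, after expanding $P$ and using $\langle ab\rangle = \langle a\rangle\langle b\rangle$, becomes a $\Z$-linear combination of expressions of the shape $[\,r_1(t), \dots, r_k(t)\,]$ where each $r_j$ is a polynomial (or rational function with unit denominator) in $t$ with the "constant" contributions split off — this is exactly the setup of Lemma \ref{lem:MultLin}, applied to the multilinear map $(a_1,\dots,a_k)\mapsto \langle a_1\rangle\cdots\langle a_k\rangle$ into the ring $\Z_0[R]$ viewed as an abelian group. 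So: choose $m$ large (we are allowed to, since $\D$ is dense, so $\D$ meets $R^*$ in a set whose image in $k$ is cofinite, and we may pick $m$-element sequences $a = (a_1,\dots,a_m)$ with all partial sums $a_J$ landing in $\D$), and form the combination $-\sum_{\emptyset\neq J} (-1)^{|J|} f(a_J)$. By Lemma \ref{lem:MultLin} (applied coordinatewise inside $P$, with total degree bound $\gamma_1 + \cdots + \gamma_t < m$ satisfied once $m$ exceeds the degrees appearing in the presentation), this equals $f(0)$ plus an error term; more precisely, the combination of $\langle\cdot\rangle$-monomials telescopes to $f(0)$ exactly. Thus
$$f(0) = -\sum_{\emptyset\neq J \subset \{1,\dots,m\}} (-1)^{|J|}\, f(a_J)$$
as an identity in $\Z_0[R]$, once $m$ is large enough relative to the presentation.

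Now the hypothesis gives $f(a_J) \in \sqrt{\ann(x)}$ for every nonempty $J$ (since $a_J\in\D$), and $\sqrt{\ann(x)}$ is an ideal, so the right-hand side lies in $\sqrt{\ann(x)}$ — hence $f(0)\in\sqrt{\ann(x)}$, as desired. Where does quasi-linearity enter? It is needed to make the choice of $m$ "large enough" legitimate in the sense that the telescoping identity actually holds in $\Z_0[R]$ acting on $x$, not merely formally: quasi-linearity of $M$ says precisely that for each polynomial $p$ there is $m_0$ with $\sigma^{-1}M = 0$ for $\sigma = s_p(a) - \langle p(0)\rangle$ once $m \geq m_0$, which is what forces the residual terms $s_p(a) - \langle p(0)\rangle$ (the only obstruction to the clean identity above, coming from the non-linearity of $\langle\cdot\rangle$) to act invertibly-on-a-zero-module, i.e. to annihilate $x$ after passing to a suitable power. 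So strictly, the identity above should be read as: $f(0)$ and $-\sum_J (-1)^{|J|} f(a_J)$ differ by a $\Z_0[R]$-combination of elements $s_{p}(a) - \langle p(0)\rangle$ for finitely many polynomials $p$ built from the $P_i, Q_i$; choosing $m$ past all the corresponding $m_0$'s (finitely many, since there are finitely many such $p$) and invoking quasi-linearity kills this difference on $x$.

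The main obstacle I anticipate is the bookkeeping in the previous paragraph: making precise that the discrepancy between the multilinear telescoping and the honest value $f(0)$ is controlled by finitely many $s_p(a)-\langle p(0)\rangle$ terms, and that these annihilate $x$ up to taking radicals — i.e. that $\sigma$ acting invertibly on the zero module translates into "$\sigma$ times something is in $\ann(x)$" in a way compatible with the radical. The cleanest route is probably: first prove the statement for linear $M$ (an honest $R$-module), where $\langle\cdot\rangle$ is literally additive and the identity $f(0) = -\sum_J(-1)^{|J|}f(a_J)$ holds on the nose by Lemma \ref{lem:MultLin}, then deduce the quasi-linear case by noting that quasi-linearity lets one replace $M$ by a module on which the relevant $\sigma$'s are invertible, run the linear argument there, and descend. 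I would also need to double-check the $a=\infty$ normalization preserves density of $\D$ and the "defined at $0$" hypothesis — but Definition \ref{dfn:Lim}\eqref{dfn:Lim:item2} is set up exactly so that $\bar Q_i(0)\in R^*$, so this is routine.
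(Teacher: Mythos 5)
Your overall scheme is the right one and follows the paper's strategy: reduce to the case $a=0$ by the shift $t \mapsto t+a$ and, for $a=\infty$, the substitution $t \mapsto 1/t$; then pick a long sequence $a=(a_1,\dots,a_m)$ with all nonempty partial sums $a_J$ in $\D$ (possible since $\D$ is dense and the residue field is infinite); and use quasi-linearity to control the discrepancy between an alternating sum over subsets and the value at $0$. But the central step as you have written it has a genuine gap, in two related places.

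First, you invoke Lemma~\ref{lem:MultLin} for the map $(a_1,\dots,a_k)\mapsto \langle a_1\rangle\cdots\langle a_k\rangle$ into $\Z_0[R]$. That map is \emph{not} $\Z$-multilinear ($\langle a+b\rangle \neq \langle a\rangle + \langle b\rangle$), so Lemma~\ref{lem:MultLin} does not apply and the ``exact telescoping identity'' $f(0) = -\sum_{\emptyset\neq J}(-1)^{|J|}f(a_J)$ in $\Z_0[R]$ is simply false: the left side is a single basis element (or a fixed $\Z$-combination of them), the right side generically a combination of $2^m-1$ distinct basis elements. Your later hedge --- that the two sides ``differ by a $\Z_0[R]$-combination of elements $s_p(a)-\langle p(0)\rangle$'' --- is not established either, and for a presentation with genuine denominators $Q_i$ I do not see how to make it true, because $f(a_J) = \langle Q(a_J)\rangle^{-1}g(a_J)$ carries a unit factor that \emph{depends on $J$} and therefore cannot be pulled out of the alternating sum.

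The fix, which is exactly what the paper does, is to clear denominators \emph{inside} $\Z_0[R]$ with a factor that varies with $t$: set $d_i$ to be the $X_i$-degree of $P$ and put
$$g(t) = \bigl\langle Q_1(t)^{d_1}\cdots Q_n(t)^{d_n}\bigr\rangle\, f(t).$$
Because each monomial of $P$ evaluated at $\langle P_i(t)/Q_i(t)\rangle$ gets fully cleared, $g(t) = \sum_j n_j\langle p_j(t)\rangle$ for honest polynomials $p_j \in R[X]$ and integers $n_j$. Now quasi-linearity applies directly to the $p_j$: for $m$ large, each $\sigma_j(a) = s_{p_j}(a) - \langle p_j(0)\rangle$ lies in $\sqrt{\ann(x)}$, hence so does $s_g(a) - g(0) = \sum_j n_j\sigma_j(a)$. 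On the other hand $g(a_J) = \langle Q(a_J)\rangle f(a_J)$ lies in $\sqrt{\ann(x)}$ for $a_J\in\D$ because $\sqrt{\ann(x)}$ is an ideal, so $s_g(a)$ does too, whence $g(0)\in\sqrt{\ann(x)}$, and finally $f(0) = \langle Q_1(0)^{-d_1}\cdots Q_n(0)^{-d_n}\rangle g(0) \in\sqrt{\ann(x)}$ because $Q_i(0)\in R^*$. Note also that your closing suggestion --- prove the statement for linear $M$ first and then ``descend'' --- does not sidestep this: even for an honest $R$-module, $f(t)$ acts as the rational function $P(P_i(t)/Q_i(t))$, so the alternating-sum identity still fails until denominators are cleared.
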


\begin{remark}
The Limit Theorem does not hold for all $\Z_0[R]$-modules $M$.
For instance, let $K$ be an infinite field, and consider the ring homomorphism
$\Z_0[K] \to \Z$ sending the elements of $K^*$ to $1$ (and $\langle 0 \rangle$ to $0$).
This makes the target $M=\Z$ into a $\Z_0[K]$-module.
For $f(t)=-\langle t \rangle + 1$, presented by $P(X)=-X+1$ and $P_1(X)=X$, $Q_1(X)=1$, we have $f(t)M=0$ for all $t\in \D=K^*$, but $f(0)=1$ is not in the radical of the annihilator of a generator of $M$.
Therefore, some condition such as "quasi-linear" is required for the theorem to hold.
\end{remark}

\begin{proof}[Proof of Theorem \ref{thm:LimitThm}]
Let $(P,P_i,Q_i)$ be the given presentation of $f$ as in (\ref{eqn:f(t)dfn}).
We first consider the case $a=0$.
Since $\lim_{t\to 0}f(t)$ exists, we have $Q_i(0)\in R^*$ for all $i=1,...,n$.
Let $d_i$ be the highest power of $X_i$ occurring in $P(X_1,...,X_n)$.
Then 
$g(t)= \langle Q_1(t)^{d_1} \cdots Q_n(t)^{d_n}\rangle \ f(t)$ 
is an integer linear combination of expressions $\langle p_j(t) \rangle$ with $p_j(X)\in R[X]$ polynomials, $j=1,...,\ell$, for some $\ell \in \N$:
$$g(t) =\langle Q_i(t)^{d_i} \cdots Q_i(t)^{d_i}\rangle f(t)= \sum_{j=1}^{\ell}n_j\langle p_j(t) \rangle.$$
Since $M$ is a quasi-linear $\Z_0[R]$-module, we can choose an integer $m_0$ such that $\sigma_j(a) = s_{p_j}(a) -\langle p_j(0)\rangle$ satisfies $\sigma_j(a)^{-1}M=0$ for all $j=1,...,\ell$ and all sequences $a=(a_1,...,a_m)$ of $m$ elements in $R$ with $m\geq m_0$.
In particular, $\sigma_j(a)\in \sqrt{\ann(x)}$ and hence 
\begin{equation}
\label{eqn:LimThm1}
s_g(a)-g(0) = \sum_{j=1}^{\ell}n_j\sigma_j (a)\in \sqrt{\ann(x)}
\end{equation}
where (abusing notation slightly)
$$s_g(a) := -\sum_{\emptyset \neq J \subset \{1,...,m\}} (-1)^{|J|}\ g(a_J) =  \sum_{j=1}^{\ell}n_js_{p_j}(a).$$

Fix $m\geq m_0$ and choose a sequence $a=(a_1,...,a_m)$ of $m$ elements in $R$ such that $a_J\in \D$ for all $\emptyset \neq J \subset \{1,...,m\}$.
This is possible for if we denote by $\pi:R \to k$ the quotient map to the residue field of $R$, and if we
have chosen $(a_1,...,a_t)$ such that $a_J\in \D$ for all $\emptyset \neq J \subset \{1,...,t\}$, then $a_{t+1}\in R$ can be any element such that $\pi({a}_{t+1})$ is not the solution $x\in k$ to one of the finitely many non-trivial linear equations $x+\pi({a}_J) = y$, $y\in k-\pi(\D)$, $J\subset \{1,...,t\}$.
Such $x\in k$ exists since $k$ is infinite.
Since $a_J\in \D$, we have $f(a_J)\in \sqrt{\ann(x)}$ for all $\emptyset \neq J \subset \{1,...,m\}$, by assumption.
Then $g(a_J)\in \sqrt{\ann(x)}$ for all $\emptyset \neq J \subset \{1,...,m\}$. 
As a $\Z$-linear combination of the $g(a_J)$'s we then have $s_g(a)\in \sqrt{\ann(x)}$.
By (\ref{eqn:LimThm1}), we have $g(0)\in \sqrt{\ann(x)}$ and thus,
$$\lim_{t\to 0}f(t) = f(0) = \langle Q_1(0)^{-d_1} \cdots Q_n(0)^{-d_n}\rangle\ g(0) \in \sqrt{\ann(x)}$$
since $Q_i(0)\in R^*$, $i=1,...,n$.

Now assume $a\in R$ arbitrary.
Define $\bar{P}_i(X) = P_i(X+a)$, $\bar{Q}_i(X) = Q_i(X+a)$, $\bar{f}(t) = f(t+a)$, $\bar{P} = P$,  $\bar{\D}=\D-a$.
Then $\bar{f}(t)\in \sqrt{\ann(x)}$ for all $t\in \bar{\D}$, and the case of $t\to 0$ treated above shows that $\lim_{t\to a}f(t) = \lim_{t\to 0}\bar{f}(t)\in \sqrt{\ann(x)}$.

Finally assume $a=\infty$.
Set $\bar{P}_i(X) $, $\bar{Q}_i(X)$, $\bar{f}(t)=f(1/t)$ as in Definition \ref{dfn:Lim} (\ref{dfn:Lim:item2}).
Note that $\bar{\D} = \{t \in R^*|\ t^{-1}\in \D\}$ is a dense region of $R$ since $\D$ is.
Then $\bar{f}(t)=f(1/t) \in \sqrt{\ann(x)}$ for $t\in \bar{\D}$.
By the case $a=0$ treated above, we have 
$$\lim_{t \to \infty}f(t) = \lim_{t \to 0} \bar{f}(t) \in \sqrt{\ann(x)}.$$
\end{proof}

\begin{remark}
\label{rmk:qlinTrivAction}
Let $R$ be a local ring with infinite residue field.
If the induced action of $R^*$ on a quasi-linear $\Z_0[R]$-module $M$ is trivial then $M=0$.
Indeed, the admissible function $f:\D=R^* \to \Z_0[R]$ defined by
$f(t)=-\langle t \rangle +1$ has $f(t)M=0$ for all $t\in \D$ but $\lim_{t\to 0}f(t)=1$ is in $\sqrt{\ann(x)}$ for $x\in M$ if and only if $x=0$.
By the Limit Theorem \ref{thm:LimitThm} we must have $M=0$.
\end{remark}

\section{Quasi-linear modules and group homology}

The goal of this section is to prove in Proposition \ref{prop:HpLambdaM3} below that the relative homology groups $H_s(G,K)$ are quasi-linear for certain $(R,\cdot,1)$-equivariant inclusions of groups $K \subset G$.
This will be applied to show that the relative homology groups $H_s(\Sp_{2r+1}(R),\Sp_{2r}(R))$ are quasi-linear $\Z_0[R]$-modules. 
At the end of the section we will give a few first applications of the Limit Theorem \ref{thm:LimitThm}.

\vspace{1ex}
 
For an integer $t\geq 1$, we consider the ring homomorphism
$$\ffi_t:\Z[R,\cdot,1] \to R^{\otimes t}: [a] \mapsto a^{\otimes t} = a \otimes \cdots \otimes a$$
where $a\in R$.
Assume the multiplicative monoid $(R,\cdot,1)$ of $R$ acts on a group $G$ from the left through group homomorphisms. 
By functoriality, $(R,\cdot,1)$ acts on the homology group $H_q(G)$ from the left through abelian group homomorphisms, that is, $H_q(G)$ is a left $\Z[R]$-module.
Recall from (\ref{eqn:M(q)dfn}) the $\Z_0[R]$-module $M(q)$ associated with an $R$-module $M$ and an integer $q\geq 0$.

\begin{lemma}
\label{lem:sHNZero}
Let $R$ be a commutative ring whose underlying abelian group $(R,+,0)$ is torsion free.
Let $A$, $B$ be $R$-modules, and let $r,\alpha,\beta \geq 1$ be integers.
Let 
\begin{equation}
\label{eqn:lem:sHNZero}
1 \to  B(\beta)\to N \to A(\alpha)  \to 1
\end{equation}
be a $(R,\cdot,1)$-equivariant central extension of groups.
Let $\sigma\in \Z[R]$ be such that $\ffi_t(\sigma)=0$ for $1 \leq t \leq r$.
Then
$\sigma^{-1}H_s(N)=0$ whenever $1 \leq s\cdot \max(\alpha,\beta) \leq r$.
\end{lemma}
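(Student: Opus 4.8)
The plan is to prove this by induction on $s \geq 1$ using the Lyndon-Hochschild-Serre spectral sequence attached to the central extension (\ref{eqn:lem:sHNZero}), together with an analysis of the $(R,\cdot,1)$-action on each page. First I would record the action: since $R$ acts on $A(\alpha)$ through the $\alpha$-th power of its natural action and on $B(\beta)$ through the $\beta$-th power, an element $\langle a\rangle \in \Z[R]$ acts on $H_p(A(\alpha))$ by the map induced by multiplication by $a^\alpha$ on $A$, hence — via the standard formula for the action of a scalar on the homology of an abelian group, which on $H_p$ of a torsion-free abelian group is polynomial of degree $\leq p$ in the scalar — by something that $\ffi$-factors appropriately. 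More precisely, for a torsion-free $R$-module $A$ and $p \geq 0$, the action of $\langle a\rangle$ on $H_p(A(\alpha))$ agrees with $\ffi_p$ applied to $\langle a^\alpha\rangle$ in a suitable sense; the upshot is that any $\sigma$ with $\ffi_t(\sigma) = 0$ for $1 \leq t \leq r$ annihilates $H_p(A(\alpha))$ and $H_p(B(\beta))$ whenever $1 \leq p\cdot\alpha \leq r$, resp. $1 \leq p\cdot\beta \leq r$. I expect this computation — identifying the $\Z[R]$-module structure on $H_p$ of $A(\alpha)$ with the image of $\ffi_{p\alpha}$, or at least showing $\sigma$ kills it in the stated range — to be the technical heart; it presumably relies on the divided-power / polynomial structure of $H_*$ of an abelian group (e.g. $H_p(A) = \Lambda^p A$ or its divided-power analogue when $A$ is torsion-free, Cartan's computation) so that scalar multiplication by $a^\alpha$ acts by $(a^\alpha)^{\text{(weight)}}$ with weight between $1$ and $p$ in each graded piece.

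Next I would run the LHS spectral sequence $E^2_{p,q} = H_p(A(\alpha); H_q(B(\beta))) \Rightarrow H_{p+q}(N)$, which is $(R,\cdot,1)$-equivariant because the extension is. Since $B(\beta) \to N$ is central, the action of $A(\alpha)$ on $H_q(B(\beta))$ is trivial, so $E^2_{p,q} = H_p(A(\alpha)) \otimes H_q(B(\beta)) \oplus \Tor$-terms — in any case each $E^2_{p,q}$ with $p+q = s$ and $q \geq 1$ is a subquotient (built from tensor and Tor) of $H_p(A(\alpha))$ against $H_q(B(\beta))$, while $E^2_{s,0} = H_s(A(\alpha))$ and $E^2_{0,s} = H_s(B(\beta))$. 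For the corner terms: $s\alpha \leq s\max(\alpha,\beta) \leq r$ and $s\beta \leq r$, so by the first step $\sigma$ annihilates both $E^2_{s,0}$ and $E^2_{0,s}$, hence $\sigma^{-1}E^2_{s,0} = \sigma^{-1}E^2_{0,s} = 0$. For a mixed term $E^2_{p,q}$ with $p,q \geq 1$, $p+q=s$: we have $p\alpha < s\alpha \leq r$ and $q\beta < s\beta \leq r$ (or with $\leq$; in any case within range), so $\sigma$ kills $H_p(A(\alpha))$ and $H_q(B(\beta))$, hence kills any bifunctorial subquotient such as the $E^2_{p,q}$ — so $\sigma^{-1}E^2_{p,q} = 0$ for every $(p,q)$ on the line $p+q = s$.

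Finally I would pass from $E^2$ to the abutment. Localization $\sigma^{-1}(-)$ is exact, so it commutes with taking homology of the differentials; thus $\sigma^{-1}E^r_{p,q} = 0$ for all $r \geq 2$ on the line $p+q = s$, in particular $\sigma^{-1}E^\infty_{p,q} = 0$ there. Since $H_s(N)$ carries a finite filtration whose associated graded pieces are the $E^\infty_{p,q}$ with $p+q=s$, and $\sigma^{-1}$ is exact, $\sigma^{-1}H_s(N) = 0$. The main obstacle, as flagged above, is the purely homological-algebra input of step one: pinning down exactly how $\langle a\rangle$ acts on $H_p$ of $A(\alpha)$ and verifying that the hypothesis $\ffi_t(\sigma) = 0$ for $1 \leq t \leq r$ is precisely what is needed to kill it throughout the range $1 \leq p\cdot\alpha \leq r$ — this is where the torsion-freeness of $(R,+,0)$ is used, to guarantee that $H_*$ of the relevant abelian groups is a polynomial (divided-power) functor with the expected weights and no low-degree collapsing.
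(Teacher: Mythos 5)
Your overall plan---the Lyndon--Hochschild--Serre spectral sequence, localization at $\sigma$, and tracking how the $\Z[R]$-action on the $E^2$-page factors through the maps $\ffi_t$---is the same as the paper's, and your identification of the action of $\langle a\rangle$ on $H_p(A(\alpha))=\Lambda^p_\Z A$ (for torsion-free $A$) as lying in the image of $\ffi_{p\alpha}$ is correct. However, there are two genuine gaps.

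\textbf{Gap 1 (the serious one).} You argue that since $\sigma$ kills $H_p(A(\alpha))$ and $H_q(B(\beta))$ separately, it also kills ``any bifunctorial subquotient such as $E^2_{p,q}$.'' This implication is false, because the $\Z[R]$-action on $H_p(A(\alpha))\otimes_\Z H_q(B(\beta))$ is the \emph{diagonal} action $\langle a\rangle\cdot(x\otimes y)=(ax)\otimes(ay)$, not a factor-wise one, so the tensor product is not a module over $\Z[R]\otimes\Z[R]$. A concrete counterexample: take $R=\Z$, $A=B=\Z$, $\alpha=\beta=1$, and $\sigma=\langle 2\rangle-2\langle 1\rangle$. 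Then $\ffi_1(\sigma)=0$, so $\sigma$ annihilates $H_1(A)\cong\Z$ and $H_1(B)\cong\Z$; but on $H_1(A)\otimes H_1(B)\cong\Z$ the element $\langle a\rangle$ acts by $a^2$, so $\sigma$ acts as $4-2=2\neq 0$. Your argument would ``prove'' that $\sigma$ kills this tensor product, which it does not. The correct step (the one the paper uses) is to observe that the diagonal action of $\langle a\rangle$ on $\Lambda^p A\otimes\Lambda^q B$ factors through $\ffi_{\alpha p+\beta q}$, via the $\Z$-linear map $R^{\otimes\alpha p}\otimes R^{\otimes\beta q}\to\Lambda^p A\otimes\Lambda^q B$, and that $\alpha p+\beta q\leq(p+q)\max(\alpha,\beta)\leq r$, so the hypothesis $\ffi_t(\sigma)=0$ for $t\leq r$ applies. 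One must work with the joint weight $\alpha p+\beta q$, not the separate weights $\alpha p$ and $\beta q$.

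\textbf{Gap 2.} The lemma assumes only that $R$ is torsion-free as an abelian group; the $R$-modules $A$ and $B$ are arbitrary and need not be torsion-free (e.g.\ $R=\Z$, $A=\Z/2$). Your computation of the action, the identity $H_p(A)=\Lambda^p_\Z A$, and the disappearance of Tor-terms in $H_p(A;H_q(B))$ via the Universal Coefficient Theorem all require $A$ (and $H_*(A)$) torsion-free. The paper bridges this by choosing simplicial resolutions $A_\ast\to A$ and $B_\ast\to B$ by projective $R$-modules (which are torsion-free as abelian groups because $R$ is), pulling back the central extension to a simplicial central extension $1\to B(\beta)\to N_\ast\to A_\ast(\alpha)\to 1$, and running the spectral sequence of the simplicial classifying space $|\mathcal{B}N_\ast|\simeq\mathcal{B}N$; the localization-at-$\sigma$ argument then descends. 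This reduction step is absent from your proposal and cannot be omitted.
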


Note that the group $N$ in the lemma need not be abelian.

\begin{proof}[Proof of Lemma \ref{lem:sHNZero}]
We will first prove the lemma when $A$ is torsion-free as abelian group.
To do so we show that in this case
\begin{equation}
\label{eqn:lem:Proof:sHNZero1}
\sigma^{-1}\left(H_p(A_{\alpha})\otimes H_q(B_{\beta})\right) =0 \hspace{5ex}\text{for}\hspace{5ex}1 \leq \alpha p + \beta q \leq r,
\end{equation}
and then apply the Hochschild-Serre spectral sequence to (\ref{eqn:lem:sHNZero}).
To prove (\ref{eqn:lem:Proof:sHNZero1}) we first also assume that $B$ is torsion-free as abelian group.
Then $H_p(A)\otimes H_q(B) = \Lambda_{\Z}^p(A) \otimes \Lambda_{\Z}^q(B)$, functorial in $A$ and $B$.
In particular, the result of the action of $a\in R \subset \Z[R]$ on $(x_1\wedge \cdots \wedge x_p) \otimes (y_1\wedge \cdots \wedge y_q) \in H_p(A({\alpha}))\otimes H_q(B({\beta}))$ is 
$(a^{\alpha}x_1\wedge \cdots \wedge a^{\alpha}x_p) \otimes (a^{\beta}y_1\wedge \cdots \wedge a^{\beta}y_q)$.
This is the image of $\ffi_{\alpha p + \beta q}(a)$ under the $\Z$-linear map
\begin{equation}
\label{eqn:lem:Proof:sHNZero2}
R^{\otimes \alpha p} \otimes R^{\otimes \beta q} \longrightarrow \Lambda_{\Z}^p(A) \otimes \Lambda_{\Z}^q(B)
\end{equation}
which uniquely extends the $\Z$-multilinear map
$$R^{\alpha p} \times R^{\beta q} \longrightarrow \Lambda_{\Z}^p(A) \otimes \Lambda_{\Z}^q(B)$$
sending $(M, N) \in R^{\alpha p} \times R^{\beta q} = M_{\alpha,p}(R)\times M_{\beta,q}(R)$ to 
$$\left((\prod_{i=1}^{\alpha}M_{i,1}) x_1\wedge \cdots \wedge (\prod_{i=1}^{\alpha}M_{i,p}) x_p\right) \otimes \left((\prod_{i=1}^{\beta} N_{i,1}) y_1 \wedge \cdots \wedge (\prod_{i=1}^{\beta} N_{i,q})y_q\right).$$
In particular, the result of the action of $\sigma \in \Z[R]$ on $(x_1\wedge \cdots \wedge x_p) \otimes (y_1\wedge \cdots \wedge y_q)$ is the image of $\ffi_{\alpha p + \beta q}(\sigma)$ under the $\Z$-linear map
(\ref{eqn:lem:Proof:sHNZero2}). 
But $\ffi_{t}(\sigma)=0$ for $1 \leq t \leq r$.
Hence, $\sigma( \Lambda_{\Z}^p(A) \otimes \Lambda_{\Z}^q(B)) =0$ for $1 \leq \alpha p + \beta q \leq r$.
In particular, (\ref{eqn:lem:Proof:sHNZero1}) holds when $A$ and $B$ are torsion-free.

Now we prove (\ref{eqn:lem:Proof:sHNZero1}) when $A$ is torsion-free as abelian group and $B$ is an arbitrary $R$-module.
Choose a surjective weak equivalence of simplicial $R$-modules $B_* \to B$ with $B_i$ a projective $R$-module for all $i\in \N$.
For instance, the simplicial $R$-module corresponding to an $R$-projective resolution of $B$ under the Dold-Kan correspondence will do.
Each $B_i$ is a torsion free abelian group since $R$ is.
The classifying space functor $\B$ induces an $(R,\cdot,1)$-equivariant weak equivalence of simplicial sets
$\B(B_*(\beta)) \to \B B(\beta)$.
Tensoring the spectral sequence of the simplicial space $n \mapsto \B B_n$,
$$E^1_{s,t} = H_t(\B B_s) \Rightarrow H_{s+t}(\B B_*) = H_{s+t}(\B B)= H_{s+t}(B),$$
with the flat $\Z$-module $H_p(A)=\Lambda_{\Z}^pA$ yields the spectral sequence of $\Z[R]$-modules
$$H_p(A(\alpha))\otimes E^1_{s,t} = H_p(A(\alpha))\otimes H_t(B_s(\beta)) \Rightarrow H_p(A(\alpha)) \otimes H_{s+t}(B(\beta)).$$
Localising at $\sigma$, this yields a spectral sequence with trivial $E^1_{s,t}$-term for $1\leq \alpha p +\beta t \leq r$.
Since $t \leq s+t$ for $0 \leq s,t$, the $E^1_{s,t}$-term of the localised spectral sequence is trivial for $1\leq \alpha p +\beta (s+t) \leq r$ (and $p,s,t\geq 0$).
This proves (\ref{eqn:lem:Proof:sHNZero1}) when $A$ is torsion-free as abelian group.

Now we prove the lemma when $A$ is torsion-free as abelian group.
In this case, the integral homology groups $H_*(A)=\Lambda_{\Z}^*(A)$ are torsion free and the natural map
$H_p(A) \otimes F \to H_p(A;F)$ is an isomorphism for any abelian group $F$, by the Universal Coefficient Theorem.
Since the extension (\ref{eqn:lem:sHNZero}) is central, the group $A$ acts trivially on $H_*(B)$ and the Hochschild-Serre spectral sequence of the group extension has the form
$$E^2_{p,q}= H_p(A;H_q(B)) \cong H_p(A)\otimes H_q(B) \Rightarrow H_{p+q}(N).$$
The spectral sequence is functorial in the exact sequence (\ref{eqn:lem:sHNZero}).
In particular, it is equivariant for the $(R,\cdot,1)$-action and thus a spectral sequence of $\Z[R]$-modules.
Localising the spectral sequence at $\sigma$ yields a spectral sequence with $E^2$-term
$\sigma^{-1}E^2_{p,q}=0$ for $1 \leq \alpha p+\beta q \leq r$, by (\ref{eqn:lem:Proof:sHNZero1}).
This implies the lemma in case $A$ is torsion-free.

Finally, we prove the lemma for arbitrary $R$-modules $A$ and $B$.
As above, we choose a surjective weak equivalence $A_* \to A$ of simplicial $R$-modules with $A_n$ a projective $R$-module for all $n$.
Then each $A_n$ is flat as abelian group since $R$ is.
Let $N_n = N\times_{A(\alpha)}A_n(\alpha)$.
The action of $(R,\cdot,1)$ on $N$, $A(\alpha)$, and $A_n(\alpha)$ defines an action of $(R,\cdot,1)$ on $N_n$.
We obtain a simplicial  $(R,\cdot,1)$-equivariant central extension
$$1 \to B(\beta) \to N_{\ast} \to A_{\ast}(\alpha) \to 1$$
with degree-wise torsion-free base $A_n$.
The surjection $N_* \to N$ of simplicial groups has contractible kernel as it equals the kernel of the surjective weak equivalence $A_{\ast} \to A$.
In particular, the map on classifying spaces $\B |s\mapsto N_s| = |s \mapsto \B N_s|  \to \B N$ is an $(R,\cdot ,1)$-equivariant weak equivalence.
By the torsion free case treated above, we have $\sigma^{-1}H_q(\B N_s) = 0$ for $1\leq q\cdot \max(\alpha,\beta) \leq r$ and for all $s\geq 0$.
Therefore, the spectral sequence of the simplicial space $s \mapsto \B N_s$,
$$E^2_{p,q}=\pi_p|s \mapsto H_q(\B N_s)| \Rightarrow H_{p+q}(\B N_*) = H_{p+q}(\B N),$$
localised at $\sigma$ has trivial $E^2_{p,q}$-term for $1\leq q\cdot \max(\alpha,\beta) \leq r$ and for all $p$.
In particular, $\sigma^{-1}E^2_{p,q}=0$ whenever $1\leq (p+q)\cdot \max(\alpha,\beta) \leq r$ (and $0\leq p,q$).
This proves the lemma.
\end{proof}

\begin{lemma}
\label{lem:ffitpaZero}
Let $a=(a_1,...,a_m)$ be a sequence of $m$ elements in $R$, and let $p(X) \in R[X]$ be a polynomial of degree $d$ with coefficients in $R$.
Then $s_p(a) - \langle p(0)\rangle \in \Z[R]$ is sent to zero under the map $\ffi_t$  for $1 \leq td < m$:
$$\ffi_t\left( s_p(a) - \langle p(0)\rangle \right) = 0 \in R^{\otimes t}.$$
\end{lemma}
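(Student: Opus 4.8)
The plan is to deduce this identity directly from Lemma~\ref{lem:MultLin}. First I would use that $\ffi_t\colon \Z[R,\cdot,1]\to R^{\otimes t}$ is a ring homomorphism, hence additive on the underlying free abelian group of the monoid ring, so that
$$\ffi_t\bigl(s_p(a)\bigr)\;=\;-\sum_{\emptyset\neq J\subset\{1,\dots,m\}}(-1)^{|J|}\,\ffi_t\bigl(\langle p(a_J)\rangle\bigr)\;=\;-\sum_{\emptyset\neq J\subset\{1,\dots,m\}}(-1)^{|J|}\,p(a_J)^{\otimes t},$$
where $a_J=\sum_{j\in J}a_j$, while $\ffi_t(\langle p(0)\rangle)=p(0)^{\otimes t}$. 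Hence it suffices to prove the identity $-\sum_{\emptyset\neq J}(-1)^{|J|}\,p(a_J)^{\otimes t}=p(0)^{\otimes t}$ in $R^{\otimes t}$.

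Next I would apply Lemma~\ref{lem:MultLin} with $M=R^{\otimes t}$, with the $\Z$-multilinear map $R^{\times t}\to R^{\otimes t}$, $(b_1,\dots,b_t)\mapsto b_1\otimes\cdots\otimes b_t$ (which is $\Z$-multilinear by the universal property of the $t$-fold tensor product over $\Z$), with the sequence $a=(a_1,\dots,a_m)$ in the role of $x$, and with $p_1=\cdots=p_t=p$, each of degree $\gamma_i=d$. The numerical hypothesis $\gamma_1+\cdots+\gamma_t<m$ of Lemma~\ref{lem:MultLin} then becomes $td<m$, which is exactly the hypothesis of the present lemma, and the hypothesis $1\leq td<m$ entails $m\geq 2$, so in particular $m\geq 1$ as that lemma requires. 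Its conclusion~(\ref{eqn:lem:MultLin}) reads $-\sum_{\emptyset\neq J}(-1)^{|J|}\,p(a_J)^{\otimes t}=p(0)^{\otimes t}$, which is precisely what was needed; therefore $\ffi_t\bigl(s_p(a)-\langle p(0)\rangle\bigr)=p(0)^{\otimes t}-p(0)^{\otimes t}=0$.

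I do not expect any genuine obstacle here: the statement is a formal corollary of Lemma~\ref{lem:MultLin}, the only points requiring minimal attention being the additivity of $\ffi_t$ on the monoid ring, the fact that the iterated tensor product is genuinely $\Z$-multilinear, and the bookkeeping $\gamma_1+\cdots+\gamma_t=td$ matching the stated degree bound. If one prefers, the (vacuous) case $d=0$ can be dispatched separately by observing that then $p$ is constant and $s_p(a)=\langle p(0)\rangle$ already holds in $\Z[R]$, using the identity $-\sum_{\emptyset\neq J}(-1)^{|J|}=1$ recorded after the definition of $s_p$.
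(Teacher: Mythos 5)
Your proof is correct and follows essentially the same route as the paper: both reduce the claim to Lemma~\ref{lem:MultLin} applied to the canonical $\Z$-multilinear map $R^{\times t}\to R^{\otimes t}$, $(b_1,\dots,b_t)\mapsto b_1\otimes\cdots\otimes b_t$, with $p_1=\cdots=p_t=p$ so that $\gamma_1+\cdots+\gamma_t=td<m$. You spell out the additivity of $\ffi_t$ and the degree bookkeeping a bit more explicitly, but there is no difference in substance.
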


\begin{proof}
The image of $s_p(a)$ in $R^{\otimes t}$ is
$$-\sum_{\emptyset \neq J \subset \{1,...,m\}} (-1)^{|J|}\ P(x_J)^{\otimes t}.$$
We apply Lemma \ref{lem:MultLin} to the canonical $\Z$-multilinear map
$R^{\times t} \to R^{\otimes t}:(x_1,...,x_t) \mapsto [x_1,...,x_t] = x_1 \otimes \cdots \otimes x_t$ and find that
$$\renewcommand\arraystretch{2.5}
\begin{array}{rcl}
\ffi_t(s_p(a)) &= &-\sum_{\emptyset \neq J \subset \{1,...,m\}} (-1)^{|J|}\ [p(a_J), \cdots, p(a_J)]\\
& = &
 [p(0), \dots , p(0)] =p(0)^{\otimes t} = \ffi_t(\langle p(0)\rangle ).
 \end{array}$$
 \end{proof}

\begin{proposition}
\label{prop:HpLambdaM3}
Let $R$ be a commutative ring, let $A$, $B$ be $R$-modules, and let $\alpha,\beta \geq 1$ be integers.
Let $G$, $K$, $N$ be groups with left $(R,\cdot,1)$-actions which are part of $(R,\cdot,1)$-equivariant exact sequences of groups
$$1 \to  B(\beta)\to N \to A(\alpha) \to 1, \hspace{10ex} 1 \to N \to G \stackrel{\rho}{\to} K \to 1$$
in which the first sequence is a central extension, the second sequence has an $(R,\cdot,1)$-equivariant splitting $i:K \to G$ such that $\langle 0 \rangle:G \to G$ is $i\circ \rho$, and the action of $(R,\cdot,1)$ on $K$ is trivial.
Then for all $s\in \Z$ the relative homology groups $H_s(G,K)$ are quasi-linear $\Z_0[R]$-modules where $K$ is considered a subgroup of $G$ by means of the inclusion $i$.
\end{proposition}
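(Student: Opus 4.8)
The plan is to read $H_s(G,K)$ off the Lyndon--Hochschild--Serre spectral sequence of the split extension $1\to N\to G\stackrel{\rho}{\to}K\to 1$, to see that this exhibits $H_s(G,K)$ as a $\Z_0[R]$-module controlled by the coefficients $H_q(N)$, and then to deduce the needed vanishing of localisations from Lemma~\ref{lem:sHNZero}. We may assume $s\ge 1$, since homology vanishes in negative degrees and the section $i$ makes $H_0(K)\to H_0(G)$ an isomorphism, so $H_s(G,K)=0$ for $s\le 0$.

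\emph{Step 1: extracting $H_*(G,K)$.} As $N$, $G$, $K$ carry $(R,\cdot,1)$-actions and the extension is equivariant, the spectral sequence $E^2_{p,q}=H_p(K;H_q(N))\Rightarrow H_{p+q}(G)$ is one of $\Z[R]$-modules, and since $R$ acts trivially on $K$ the action on $E^2$ is the one induced on the coefficients $H_q(N)$. The section $i$ makes the edge map $\rho_*\colon H_*(G)\to H_*(K)$ a split surjection, so $E^\infty_{*,0}=E^2_{*,0}=H_*(K)$, no differential enters or leaves the bottom row, and the filtration step $0\to F_{s-1}H_s(G)\to H_s(G)\stackrel{\rho_*}{\to}H_s(K)\to 0$ is $R$-equivariantly split by $i_*$. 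Hence $F_{s-1}H_s(G)=\ker\rho_*$, which the long exact sequence of the pair $(G,K)$ (in which $i_*$ is split injective) identifies $R$-equivariantly with $H_s(G,K)$. Therefore $H_s(G,K)$ carries a finite $\Z[R]$-module filtration whose graded pieces $E^\infty_{p,q}$ ($p+q=s$, $q\ge 1$) are subquotients of $H_p(K;H_q(N))$. Since $0\in R$ acts on $G$ as $i\circ\rho$, which kills $N$, it acts trivially on $N$ and hence as $0$ on $H_q(N)$ for $q\ge 1$; so all of these modules, and $H_s(G,K)$ itself, are $\Z_0[R]$-modules. Finally, for $r\in R$ the map $r\cdot(-)\colon G\to G$ is a group homomorphism fixing $i(K)$ pointwise, so the $R$-action and the conjugation $K$-action on $H_q(N)$ commute; exactness of localisation then gives $\sigma^{-1}H_p(K;H_q(N))=H_p(K;\sigma^{-1}H_q(N))$ for $\sigma\in\Z_0[R]$, and in particular $\sigma^{-1}H_q(N)=0$ for all $1\le q\le s$ forces $\sigma^{-1}H_s(G,K)=0$.

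\emph{Step 2: applying Lemma~\ref{lem:sHNZero}.} Fix $p(X)\in R[X]$; we may assume $\deg p=d\ge 1$, for if $p$ is constant then $s_p(a)=\langle p(0)\rangle$ for every $m\ge 1$, so $\sigma:=s_p(a)-\langle p(0)\rangle=0$ and the localisation vanishes trivially. Put $m_0=s\cdot\max(\alpha,\beta)\cdot d+1$. Given $m\ge m_0$, a sequence $a=(a_1,\dots,a_m)$ in $R$, and $\sigma=s_p(a)-\langle p(0)\rangle\in\Z_0[R]$, it suffices by Step~1 to prove $\sigma^{-1}H_q(N)=0$ for $1\le q\le s$. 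Since Lemma~\ref{lem:sHNZero} needs a torsion-free ground ring, pass to the polynomial ring $R'=\Z[x_r:r\in R]$ with its surjection $\pi\colon R'\to R$: lift each $a_i$ to $\tilde a_i\in R'$ and $p$ to a polynomial $\tilde p\in R'[X]$ of the same degree $d$, and set $\tilde\sigma=s_{\tilde p}(\tilde a)-\langle\tilde p(0)\rangle\in\Z[R']$. Restricting the $(R,\cdot,1)$-action along the monoid homomorphism $R'\to R$ makes the central extension $(R',\cdot,1)$-equivariant over the torsion-free ring $R'$, and $\tilde\sigma$ acts on $H_q(N)$ through $\pi_*(\tilde\sigma)=\sigma$. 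By Lemma~\ref{lem:ffitpaZero}, $\ffi_t(\tilde\sigma)=0$ whenever $1\le td<m$, hence for all $1\le t\le r:=s\cdot\max(\alpha,\beta)$ since $rd=m_0-1<m$. Lemma~\ref{lem:sHNZero} now gives $\tilde\sigma^{-1}H_q(N)=0$ for $1\le q\cdot\max(\alpha,\beta)\le r$, that is for $1\le q\le s$; hence $\sigma^{-1}H_q(N)=0$ there, and $\sigma^{-1}H_s(G,K)=0$.

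The substantive input is already isolated in Lemmas~\ref{lem:ffitpaZero} and~\ref{lem:sHNZero}; what remains is the equivariant bookkeeping of Step~1 and the descent to the torsion-free ring $R'$ in Step~2, the latter being the only genuine trick. I expect the fussiest point to be Step~1 --- the $R$-equivariant identification $H_s(G,K)\cong F_{s-1}H_s(G)$ and the check that the $(R,\cdot,1)$- and conjugation actions on $H_q(N)$ commute --- since nothing there is deep but it is where an imprecise argument would go wrong.
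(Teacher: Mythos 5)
Your proof is correct and takes essentially the same route as the paper: both read $H_s(G,K)$ off the Hochschild--Serre spectral sequence of the split extension $1\to N\to G\to K\to 1$ (the paper packages this as a complementary spectral sequence $\tilde E^2_{p,q}=H_p(K;\tilde H_q(N))\Rightarrow H_*(G,K)$, you work directly with the filtration on $\ker\rho_*$; these amount to the same bookkeeping), reduce to $\sigma^{-1}H_q(N)=0$, and feed Lemmas~\ref{lem:ffitpaZero} and~\ref{lem:sHNZero} after lifting to a torsion-free ring surjecting onto $R$. The only cosmetic difference is that the paper proves the torsion-free case in full and then lifts the whole argument at the end (using the monoid ring $\Z[R]$ as the torsion-free cover), whereas you keep the spectral-sequence step generic and lift only the input to Lemma~\ref{lem:sHNZero} (using the polynomial ring $\Z[x_r:r\in R]$); both choices work.
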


\begin{proof}
The action of $\langle 0 \rangle$ on $H_s(G,K)$ factors through $H_s(K,K)=0$.
Hence, the $\Z[R]$-module $H_s(G,K)$ is a $\Z_0[R]$-module.
We will prove that for every sequence $a=(a_1,...,a_m)$ of $m$ elements in $R$ and every polynomial $p(X) \in R[X]$ of degree $d$ with coefficients in $R$, the element $\sigma = s_p(a)-\langle p(0)\rangle \in \Z[R]$ satisfies 
\begin{equation}
\label{eqn:Prop:qlin}
\sigma^{-1}H_s(G,K)=0,\hspace{5ex} \text{provided}\hspace{2ex}sd\max(\alpha,\beta)<m.
\end{equation}
This establishes that $H_s(G,K)$ is quasi-linear with $m_0=sd\max(\alpha,\beta)$ in Definition \ref{dfn:quasiLinear}.

To prove (\ref{eqn:Prop:qlin}), assume first that the underlying abelian group $(R,+,0)$ of $R$ is torsion-free.
By functoriality, the Hochschild-Serre spectral sequence 
\begin{equation}
\label{eqn:prop:HpLambdaM2}
E^2_{p,q}=H_p(K;H_q(N)) \Rightarrow H_{p+q}(G)
\end{equation}
of the extension $1 \to N \to G \to K\to 1$ carries an action of the monoid $(R,\cdot,1)$ induced from the action of that monoid on the extension. 
Section $i:K\to G$ and projection $\rho:G \to K$ make the extension $1 \to 1 \to K \to K \to 1$ of groups with (trivial) $(R,\cdot,1)$-action a direct factor of $1 \to N \to G \to K\to 1$, hence its Hochschild-Serre spectral sequence of (trivial) $\Z[R]$-modules (which degenerates at $E^2$) is a direct factor of that of (\ref{eqn:prop:HpLambdaM2}).
Its complement yields the strongly convergent spectral sequence 
$$\tilde{E}^2_{p,q}=H_p(K;\tilde{H}_q(N)) \Rightarrow H_{p+q}(G,K)$$
where $\tilde{H}_q(N)=H_q(N)$ for $q\geq 1$ and $0$ otherwise.
The action of $g\in K$ on $H_q(N)$ is induced by conjugation with $i(g)$ on $N$.
Since $(R,\cdot,1)$ acts trivially on $K$ and $i$ is equivariant, the action of $(R,\cdot,1)$ on $N$ and the action of $K$ on $N$ commute.
It follows that $\sigma^{-1}\tilde{E}^2_{p,q}=\sigma^{-1}H_p(K;\tilde{H}_q(N)) = H_p(K;\sigma^{-1}\tilde{H}_q(N)) = 0$ for $0 \leq q \max(\alpha,\beta) \leq r$ and any $p$, by Lemmas \ref{lem:sHNZero} and \ref{lem:ffitpaZero}.
Hence, $\sigma^{-1}H_s(G,K)=0$ for $0 \leq s\cdot \max(\alpha,\beta) \leq r$.

Now we prove (\ref{eqn:Prop:qlin}) when $(R,+,0)$ is not assumed torsion-free.
Choose a surjection of commutative rings $\pi: \bar{R} \twoheadrightarrow R$ such that the abelian group $(\bar{R},+,0)$ of $\bar{R}$ is torsion free, for instance, $\Z[R] \twoheadrightarrow R:\langle a\rangle \mapsto a$.
Choose a sequence $\bar{a}=(\bar{a}_1,...,\bar{a}_m)$ in $\bar{R}$ and a polynomial $\bar{p}(X)\in \bar{R}[X]$ such that $\pi(\bar{a}) = a$ and $\pi(\bar{p}(X)) = p(X)$.
The ring homomorphism $\pi$ makes $A$ and $B$ into $\bar{R}$-modules, and the action of $(\bar{R},\cdot,1)$ on $H_s(G,K)$ is induced from the $({R},\cdot,1)$-action via the map $\pi$.
Therefore, multiplication by the element $\bar{\sigma} = s_{\bar{p}}(\bar{a}) - \bar{p}(0)$ on $H_s(G,K)$ equals multiplication by the element $\sigma=s_{{p}}({a}) - {p}(0)$.
In particular, $\bar{\sigma}^{-1}H_s(G,K)={\sigma}^{-1}H_s(G,K)$.
By the torsion-free case above, we have
$\bar{\sigma}^{-1}H_s(G,K) = 0$ for $sd\max(\alpha,\beta)<m$. 
This finishes the proof of (\ref{eqn:Prop:qlin}) and hence that of the proposition.
\end{proof}

\begin{example}
\label{ex:MqSemiDirect}
Let $G$ be a group that acts from the left on an $R$-module $M$ through $R$-module homomorphisms.
Then for all $q\geq 0$, the semi-direct product 
$M(q)\rtimes G$ carries an action of $(R,\cdot,1)$ defined by 
$a(x,g)=(a^qx,g)$ such that the exact sequence 
$$1 \to M(q) \to M(q)\rtimes G \to G \to 1$$
is $(R,\cdot,1)$-equivariant with trivial action on the base $G$ and equivariant section $G \to  M(q)\rtimes G: g\mapsto (0,g)$.
By Proposition \ref{prop:HpLambdaM3} with $B=0$, $\alpha =q$, $A=M$, $N=M(q)$, the relative homology groups $H_s(M(q)\rtimes G,G)$ are quasi-linear $\Z_0[R]$-modules whenever $q\geq 1$.
\end{example}

\begin{example}
Continuing example \ref{ex:MqSemiDirect}, assume moreover that there is an integer $q\geq 1$ and a group homomorphism $\rho:R^* \to Z(G)$ into the center $Z(G)$ of $G$ such that $\rho(a)x=a^qx$.
Then the $(R,\cdot,1)$ action of $a\in R^*$ on $M(q)\rtimes G$ equals the conjugation action on $M(q)\rtimes G$ by $(0,\rho(a))$. 
In particular, the quasi-linear $\Z_0[R]$-module $H_s(M(q)\rtimes G,G)$ yields the trivial action when restricted to $R^*\subset \Z_0[R]$.
By Remark \ref{rmk:qlinTrivAction}, if $R$ is local with infinite residue field, we must have $H_s(M(q)\rtimes G,G)=0$.
This has been used many times, for instance for $G=GL_n(R)$ acting diagonally on $M=R^n \times \cdots \times R^n$ via its natural action on $R^n$ and $\rho:R^* \to GL_n(R):a \mapsto a\cdot I_n$, we obtain \cite[Theorem 1.11]{SuslinNesterenko} for local rings with infinite residue fields.
\end{example}

\begin{example}
Continuing example \ref{ex:MqSemiDirect}, we have $s_{\langle X^r\rangle}(a)^{-1}H_s(M(q)\rtimes G,G)=0$ for all sequences $a=(a_1,...,a_m)$ in $R$ with $m\geq m_0$.
This was used in \cite{myEuler} for $G=SL_n(R)$, $M=R^n \times \cdots \times R^n$, and $r$ and $q$ powers of $n$.
\end{example}

Now comes the most relevant example for this paper.

\begin{example}
\label{ex:SpQlinear}
Let $n\geq 0$ be an integer.
For $a\in R^*$, the conjugation action $c_a$ of the $(2n+2)\times (2n+2)$ diagonal matrix $D_a\in \Sp_{2n+2}(R)$ with diagonal entries $(a,a^{-1},1,1,...,1)$ on the group $\Sp_{2n+2}(R)$
induces an action
$$\left(\begin{smallmatrix}1& c & {^tu}\psi M  \\ 0 & 1 & 0 \\  0 & u & M \end{smallmatrix}\right)
\stackrel{c_a}{\mapsto}
\left(\begin{smallmatrix}a& 0 & 0  \\ 0 & a^{-1} & 0 \\  0 & 0 & 1 \end{smallmatrix}\right)\left(\begin{smallmatrix}1& c & {^tu}\psi M  \\ 0 & 1 & 0 \\  0 & u & M \end{smallmatrix}\right)
\left(\begin{smallmatrix}a^{-1}& 0 & 0  \\ 0 & a & 0 \\  0 & 0 & 1 \end{smallmatrix}\right)
=
\left(\begin{smallmatrix}1& a^2c & {^t(au)}\psi M  \\ 0 & 1 & 0 \\  0 & au & M \end{smallmatrix}\right)
$$
on the subgroup $\Sp_{2n+1}(R)$ which extends to an action 
$$\left(\begin{smallmatrix}1& c & {^tu}\psi M  \\ 0 & 1 & 0 \\  0 & u & M \end{smallmatrix}\right)
\stackrel{\langle a \rangle}{\mapsto} 
\left(\begin{smallmatrix}1& a^2c & {^t(au)}\psi M  \\ 0 & 1 & 0 \\  0 & au & M \end{smallmatrix}\right)$$
of the monoid $(R,\cdot,1)$ on $\Sp_{2n+1}(R)$, $a\in R$.
Denote by $N\subset \Sp_{2n+1}(R)$ the subgroup of matrices with $M=1$, by $A\subset \Sp_{2n+1}(R)$ the subgroup of matrices with $M=1$ and $c=0$, and by $B\subset \Sp_{2n+1}(R)$ the subgroup with $M=1$, $u=0$, then $(A,\cdot,1)=(R^{2n},+,0)$, $(B,\cdot,1)=(R,+,0)$, and we have $(R,\cdot,1)$ equivariant exact sequences
$$1 \to B(2) \to N \to A(1) \to 1, \hspace{6ex} 1 \to N \to \Sp_{2n+1}(R) \to \Sp_{2n}(R)\to 1$$
with left sequence central and $\langle 0 \rangle :\Sp_{2n+1}(R) \to \Sp_{2n+1}(R)$ the projection $\rho:\Sp_{2n+1}(R) \to \Sp_{2n}(R)$ followed by the inclusion $\eps:\Sp_{2n}(R) \to \Sp_{2n+1}(R)$.
By Proposition \ref{prop:HpLambdaM3}, the image of the projector $1-(\eps\rho)_*$ of $H_p(\Sp_{2n+1}(R))$ which is the relative homology group
$$\tilde{H}_p(\Sp_{2n+1}(R)):=H_p(\Sp_{2n+1}(R),\Sp_{2n}(R))=\im(1-(\eps\rho)_*) $$
is a quasi-linear $\Z_0[R]$-module for all $p\in \Z$.
We have a canonical decomposition
$$H_p(\Sp_{2n+1}(R)) = \im((\eps\rho)_*) \oplus \im(1-(\eps\rho)_*) = H_p(\Sp_{2n}(R)) \oplus \tilde{H}_p(\Sp_{2n+1}(R)).$$
\end{example}

\begin{lemma}
\label{lem:tildtoIszero}
Let $R$ be a local ring with infinite residue field.
Then the $\Z_0[R]$-module $\tilde{H}_p(\Sp_{2n+1}(R))=H_p(\Sp_{2n+1}(R),\Sp_{2n}(R))$
is quasi-linear, and the composition
$$\tilde{H}_p(\Sp_{2n+1}(R)) \subset H_p(\Sp_{2n+1}(R)) \to H_p(\Sp_{2n+2}(R))$$
is zero.
Moreover, the map $H_p(\Sp_{2n+1}(R)) \to H_p(\Sp_{2n+2}(R))$ is surjective if and only if the map 
$H_p(\Sp_{2n}(R)) \to H_p(\Sp_{2n+2}(R))$ is surjective.
\end{lemma}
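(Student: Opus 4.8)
The first assertion is exactly Example~\ref{ex:SpQlinear}, so the plan concerns only the vanishing statement and the ``moreover''. Write $\iota\colon\Sp_{2n+1}(R)\hookrightarrow\Sp_{2n+2}(R)$ for the standard inclusion and let $\phi$ denote the composite
$$\tilde H_p(\Sp_{2n+1}(R))\ \subseteq\ H_p(\Sp_{2n+1}(R))\ \longrightarrow\ H_p(\Sp_{2n+2}(R)),$$
whose second arrow is $\iota_*$. The idea is to exhibit $\phi$ as factoring through a quasi-linear $\Z_0[R]$-module carrying the trivial $R^*$-action; by Remark~\ref{rmk:qlinTrivAction} any such module vanishes, so $\phi=0$ follows.

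First I will record the compatibility of $\iota$ with the monoid action. By Example~\ref{ex:SpQlinear}, for $t\in R^*$ the action of $\langle t\rangle$ on $\Sp_{2n+1}(R)$ is conjugation by the diagonal matrix $D_t\in\Sp_{2n+2}(R)$ with diagonal entries $(t,t^{-1},1,\dots,1)$, and this conjugation preserves the subgroup $\Sp_{2n+1}(R)=\{A\in\Sp_{2n+2}(R)\mid Ae_1=e_1\}$. Since conjugation by $D_t$ is an inner automorphism of $\Sp_{2n+2}(R)$ it induces the identity on $H_p(\Sp_{2n+2}(R))$; hence $\iota_*\circ\langle t\rangle=\iota_*$ on $H_p(\Sp_{2n+1}(R))$, and in particular $\phi(\langle t\rangle y)=\phi(y)$ for all $y\in\tilde H_p(\Sp_{2n+1}(R))$ and all $t\in R^*$.

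Now, using that the ring $\Z_0[R]$ is commutative, for $b\in R$, $t\in R^*$, and $x\in\tilde H_p(\Sp_{2n+1}(R))$ I compute
$$\phi\bigl(\langle b\rangle(\langle t\rangle-1)x\bigr)=\phi\bigl(\langle t\rangle(\langle b\rangle x)\bigr)-\phi(\langle b\rangle x)=\phi(\langle b\rangle x)-\phi(\langle b\rangle x)=0,$$
where the middle equality applies the previous step to $y=\langle b\rangle x\in\tilde H_p(\Sp_{2n+1}(R))$. Therefore $\phi$ annihilates the $\Z_0[R]$-submodule $K\subseteq\tilde H_p(\Sp_{2n+1}(R))$ generated by all elements $(\langle t\rangle-1)x$ with $t\in R^*$, so $\phi$ factors through $M:=\tilde H_p(\Sp_{2n+1}(R))/K$. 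By Example~\ref{ex:SpQlinear} the module $\tilde H_p(\Sp_{2n+1}(R))$ is quasi-linear, and since quasi-linear $\Z_0[R]$-modules form a Serre subcategory (as noted after Definition~\ref{dfn:quasiLinear}) the quotient $M$ is quasi-linear as well; by construction $R^*$ acts trivially on $M$. Remark~\ref{rmk:qlinTrivAction} then gives $M=0$, hence $\phi=0$, which is the vanishing statement.

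Finally, for the last statement I will use the canonical decomposition $H_p(\Sp_{2n+1}(R))=\eps_*H_p(\Sp_{2n}(R))\oplus\tilde H_p(\Sp_{2n+1}(R))$ from Example~\ref{ex:SpQlinear}: since $\iota\circ\eps\colon\Sp_{2n}(R)\to\Sp_{2n+2}(R)$ is the standard inclusion and $\phi=0$ by the above, applying $\iota_*$ shows that the image of $H_p(\Sp_{2n+1}(R))\to H_p(\Sp_{2n+2}(R))$ coincides with the image of $\eps_*\colon H_p(\Sp_{2n}(R))\to H_p(\Sp_{2n+2}(R))$, whence one of these maps is surjective if and only if the other is. I expect the one delicate point to be the commutativity computation in the third paragraph: it is exactly what promotes ``$\phi$ kills each $(\langle t\rangle-1)x$'' to ``$\phi$ kills a $\Z_0[R]$-submodule'', which is what is needed to invoke the Serre subcategory property and Remark~\ref{rmk:qlinTrivAction}; the mere $\Z$-span of the elements $(\langle t\rangle-1)x$ is not visibly stable under $\Z_0[R]$, and everything else is bookkeeping once Example~\ref{ex:SpQlinear} and Remark~\ref{rmk:qlinTrivAction} are in hand.
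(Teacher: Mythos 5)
Your proof is correct and rests on the same two pillars the paper's own proof uses: the quasi-linearity of $\tilde H_p(\Sp_{2n+1}(R))$ from Example~\ref{ex:SpQlinear}, and the observation that the map to $H_p(\Sp_{2n+2}(R))$ is $R^*$-equivariant for the conjugation action, which is trivial on the target because each $D_t$ is an inner automorphism of $\Sp_{2n+2}(R)$. The only genuine difference is in how these are combined. The paper exploits quasi-linearity directly: it picks a sequence $a=(a_1,\dots,a_m)$ with $m\geq m_0$ and all partial sums $a_J\in R^*$, so that $s_X(a)$ acts as the identity on the target while $s_X(a)^{-1}\tilde H_p(\Sp_{2n+1}R)=0$, and then the map must vanish (and the same localization immediately handles the ``moreover'' clause). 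You instead pass to the quotient of $\tilde H_p(\Sp_{2n+1}(R))$ by the $\Z_0[R]$-submodule generated by the elements $(\langle t\rangle-1)x$, observe via the Serre-subcategory remark after Definition~\ref{dfn:quasiLinear} that the quotient is quasi-linear with trivial $R^*$-action, and invoke Remark~\ref{rmk:qlinTrivAction} — whose proof is itself just the Limit Theorem applied to $f(t)=-\langle t\rangle+1$, i.e.\ the same $s_X(a)$ mechanism. So the two arguments are mathematically equivalent; yours adds one layer of indirection but cleanly isolates the principle ``quasi-linear plus trivial $R^*$-action forces vanishing'' as a reusable black box. Your treatment of the ``moreover'' via the splitting of Example~\ref{ex:SpQlinear} matches Corollary~\ref{cor:DirSumDec}.
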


\begin{proof}
Quasi-linearity is Example \ref{ex:SpQlinear}.

Note that the composition $\tilde{H}_p(\Sp_{2n+1}(R)) \to H_p(\Sp_{2n+2}(R))$
is $R^*$-equivariant where $R^*$ acts through conjugation with $D_a\in \Sp_{2n+2}$ and thus acts trivially on the target.
Since the source is quasi-linear, there is an integer $m_0\geq 1$ such that for all sequences $a=(a_1,...,a_m)$ of $m\geq m_0$ elements in $R$ we have $s_{X}^{-1}(a)\tilde{H}_p(\Sp_{2n+1}R) =0$.
If $R$ is local with infinite residue field, we can find a sequence $a=(a_1,...,a_m)$ such that $a_J\in R^*$ for all $\emptyset \neq J \subset \{1,...,m\}$.
Since $R^*$ acts on $H_p(\Sp_{2n+2}(R))$ trivially, for such an $a$, $s_{X}(a)$ acts as the identity on
$H_p(\Sp_{2n+2}(R))$ and thus $s_{X}^{-1}(a)H_p(\Sp_{2n+2}R) = H_p(\Sp_{2n+2}(R))$.
In particular, the $R^*$-equvariant map 
$\tilde{H}_p(\Sp_{2n+1}(R)) \to H_p(\Sp_{2n+2}(R))$
factors through $s_{X}^{-1}(a)\tilde{H}_p(\Sp_{2n+1}R) =0$, hence that map is zero.
For the last statement we note that $H_p(\Sp_{2n}(R)) \to H_p(\Sp_{2n+2}(R))$ is the localisation of 
$H_p(\Sp_{2n+1}(R)) \to H_p(\Sp_{2n+2}(R))$ at $s_{X}(a)$. 
In particular, surjectivity of the second map implies surjectivity of the first. 
The converse is obvious.
\end{proof}

\begin{corollary}
\label{cor:DirSumDec}
Let $R$ be a local ring with infinite residue field.
Under the decomposition $H_p(\Sp_{2n+1}) = H_p(\Sp_{2n}) \oplus \tilde{H}_p(\Sp_{2n+1})$ of Example \ref{ex:SpQlinear}, the maps 
$H_p(\Sp_{2n}(R)) \to H_p(\Sp_{2n+1}(R)) \to H_p(\Sp_{2n+2}(R))$
become
$$\xymatrix{
H_p(\Sp_{2n}(R)) \ar[r]^{\hspace{-10ex}\left(\begin{smallmatrix}1 \\ 0 \end{smallmatrix}\right)} &
H_p(\Sp_{2n}(R)) \oplus \tilde{H}_p(\Sp_{2n+1}(R)) \ar[r]^{\hspace{8ex}(\eps_*, 0)} & H_p(\Sp_{2n+2}(R)).
}$$
\end{corollary}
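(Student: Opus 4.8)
The plan is to unwind the direct sum decomposition of Example \ref{ex:SpQlinear} and then quote the vanishing in Lemma \ref{lem:tildtoIszero}. Throughout, write $\eps\colon \Sp_{2n}(R) \hookrightarrow \Sp_{2n+1}(R)$ and $\iota\colon \Sp_{2n+1}(R) \hookrightarrow \Sp_{2n+2}(R)$ for the two inclusions appearing in the corollary, and $\rho\colon \Sp_{2n+1}(R) \to \Sp_{2n}(R)$ for the retraction of Example \ref{ex:SpQlinear}. Thus $\rho\eps = \mathrm{id}$, the composite $\iota\eps$ is the standard inclusion $\Sp_{2n}(R)\subset\Sp_{2n+2}(R)$ of (\ref{eqn:SpInclusions}), and $(\eps\rho)_*$ is an idempotent on $H_p(\Sp_{2n+1})$ (since $(\eps\rho)^2=\eps(\rho\eps)\rho=\eps\rho$) realising the decomposition $H_p(\Sp_{2n+1}) = \im((\eps\rho)_*)\oplus\tilde{H}_p(\Sp_{2n+1})$.

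First I would pin down the identification $\im((\eps\rho)_*) \cong H_p(\Sp_{2n})$ used there. Since $\rho_*\eps_* = \mathrm{id}$, the map $\eps_*\colon H_p(\Sp_{2n}) \to H_p(\Sp_{2n+1})$ is a split monomorphism, and its image is exactly $\im((\eps\rho)_*)$: the containment $\im((\eps\rho)_*)\subseteq\im(\eps_*)$ is immediate, and conversely $\eps_*(x) = \eps_*\rho_*\eps_*(x) = (\eps\rho)_*\big(\eps_*(x)\big)$ lies in $\im((\eps\rho)_*)$. Hence $\eps_*$ is the canonical isomorphism $H_p(\Sp_{2n}) \stackrel{\cong}{\longrightarrow} \im((\eps\rho)_*)$, with inverse the restriction of $\rho_*$, while the projection of $H_p(\Sp_{2n+1})$ onto the complementary summand $\tilde{H}_p(\Sp_{2n+1})$ is $1 - (\eps\rho)_*$.

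With this in hand, I would obtain the first map of the corollary as $\left(\begin{smallmatrix}1\\0\end{smallmatrix}\right)$ by computing the two components of $\eps_*(x)$ for $x\in H_p(\Sp_{2n})$: the $\tilde{H}_p$-component is $\big(1-(\eps\rho)_*\big)\eps_*(x) = \eps_*(x) - \eps_*\rho_*\eps_*(x) = 0$, and the component in $\im((\eps\rho)_*)$, read off through the inverse isomorphism $\rho_*$, is $\rho_*\eps_*(x) = x$. For the second map $\iota_*$ I would restrict to each summand. On $\tilde{H}_p(\Sp_{2n+1})$ the composite $\tilde{H}_p(\Sp_{2n+1}) \subset H_p(\Sp_{2n+1}) \stackrel{\iota_*}{\longrightarrow} H_p(\Sp_{2n+2})$ is zero, which is exactly Lemma \ref{lem:tildtoIszero}. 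On the summand $\im((\eps\rho)_*)$, parametrised by $x\in H_p(\Sp_{2n})$ via $\eps_*$, the map sends $x$ to $\iota_*\eps_*(x) = (\iota\eps)_*(x)$, which is the stabilisation $H_p(\Sp_{2n})\to H_p(\Sp_{2n+2})$ written $\eps_*$ in the statement because $\iota\eps$ is the standard inclusion. Hence the second map is $(\eps_*,0)$, as asserted.

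I do not expect any genuine obstacle: once the identification $\im((\eps\rho)_*)\cong H_p(\Sp_{2n})$ is fixed as $\eps_*$, everything is formal manipulation of the idempotent $(\eps\rho)_*$ together with functoriality of group homology. The one non-formal ingredient is the vanishing in Lemma \ref{lem:tildtoIszero}, which is precisely where quasi-linearity of $\tilde{H}_p(\Sp_{2n+1})$ and, through it, the Limit Theorem \ref{thm:LimitThm} enter.
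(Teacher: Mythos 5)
Your proof is correct and follows exactly the same route the paper intends: the paper's one-line proof ("This follows from Lemma \ref{lem:tildtoIszero}") is precisely your argument, and you have simply spelled out the formal bookkeeping with the idempotent $(\eps\rho)_*$ that the paper leaves implicit.
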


\begin{proof}
This follows from Lemma \ref{lem:tildtoIszero}.
\end{proof}

\section{Degeneration at $E^2$}
\label{sec:E2degeneration}

In this section we will prove that the spectral sequence (\ref{eqn:E1spseq}) degenerates at $E^2$.
Our strategy for degeneration is to construct a map of spectral sequences $\tilde{E} \to E$ from a spectral sequence $\tilde{E}$ to (\ref{eqn:E1spseq}). 
The spectral sequence $\tilde{E}$ will trivially degenerate at $E^2$, and the main point will be to show that $\tilde{E}^2 \to E^2$ is surjective in all bidegrees. 
That will ensure that (\ref{eqn:E1spseq}) degenerates at $E^2$ as well.
The spectral sequence $\tilde{E}$ will be a direct sum of spectral sequences $E(r)$, $r=0,...,n$, which we will introduce now.
\vspace{1ex}

For $0\leq r < n$ and $i=1,...,2r+2$, consider the $\Sp_{2n-2r}(R)$-set
$$U_{2r+2}^{(i)}(R^{2n}) =
\{ \left(\begin{smallmatrix}u \\ w \end{smallmatrix}\right)\in M_{2n,2r+2}(R)|\  u \in U_{2r+2}(R^{2r}), \ w_i \in U_1(R^{2n-2r}),\ d_iw=0\}
$$
where $N\in \Sp_{2n-2r}(R)$ acts by $N\cdot \left(\begin{smallmatrix}u \\ w \end{smallmatrix}\right) = \left(\begin{smallmatrix}u \\ Nw \end{smallmatrix}\right)$, that is, via its natural inclusion $\Sp_{2n-2r}(R) \subset \Sp_{2n}(R)$.
Note that $d_iw=(w_1,...,\hat{w}_i,...,w_{2r+2})=0$ means that $w=(0,..0,w_i,0..,0)$ only has potentially non-zero entry in the $i$-th column.
We have the bijection
$$\Sp_{2n-2r}(R)\backslash U_{2r+2}^{(i)}(R^{2n}) \stackrel{\cong}{\longrightarrow}  U_{2r+2}(R^{2r}): \left(\begin{smallmatrix}u \\ w \end{smallmatrix}\right) \mapsto u.$$
The stabiliser of the action on $U^{(i)}_{2r+2}(R)$ at $\left(\begin{smallmatrix}u \\ (e_{1})_i \end{smallmatrix}\right)$ is $\Sp_{2n-2r-1}(R)$
where $(e_1)_i=(0,..,0,e_1,0...,0)$ with $e_1\in R^{2n-2r}$ in the $i$-th column.
Note that if $v = \left(\begin{smallmatrix}u \\ w \end{smallmatrix}\right)\in U_{2r+2}^{(i)}$ then $d_jv\in U_{2r+1}(R^{2n})$ for all $1\leq j\leq 2r+2$ with $j\neq i$, and
$d_iv \in U_{2r+1}(R^{2r})$.
We define the complex ${C}_*(R^{2n};r)$ as
$$\xymatrix{
0 \ar[r] & \displaystyle \bigoplus_{i=1}^{2r+2} \Z[U^{(i)}_{2r+2}(R^{2n})] \ar[rrrr]^{\hspace{6ex}(-d_1,d_2,-d_3,...,d_{2r+2})} &&&& \Z [U_{2r+1}(R^{2r})] \ar[r] & 0 
}$$
with $\Z [U_{2r+1}(R^{2r})]$ placed in degree $2r$, and the $i$-th component of the differential is $(-1)^id_i$.
This is a complex of $\Sp_{2n-2r}(R)$-modules where $\Sp_{2n-2r}(R)$ acts trivially on the degree $2r$ piece $\Z [U_{2r+1}(R^{2r})] $.
Since $d\circ d=0$, the diagram
$$\xymatrix{
 0 \ar[r]\ar[d] & \Z[U^{(i)}_{2r+2}(R^{2n})] \ar[d]^{d^{\omit}_i} \ar[rr]^{\hspace{0ex}(-1)^id_i} && \Z  [U_{2r+1}(R^{2r})] \ar[d]^{d} \ar[r] & 0\ar[d] \\
 \Z[U_{2r+2}(R^{2n})] \ar[r]_d & \Z[U_{2r+1}(R^{2n})]  \ar[rr]_d &&  \Z[U_{2r}(R^{2n})] \ar[r]_d  & \Z[U_{2r-1}(R^{2n})]}$$
commutes 
where the second to left vertical map $d^{\omit}_{i}: \Z[U^{(i)}_{2r+2}(R^{2n})] \to  \Z[U_{2r+2}(R^{2n})]$ is defined on basis elements $w\in  \Z[U^{(i)}_{2r+2}(R^{2n})]$ by
$$d_i^{\omit}(w) = \sum_{j=1, j\neq i}^{2r+2}(-1)^{j+1}d_jw$$
and can informally be thought of as $d_i^{\omit} = d + (-1)^id_i$.
This defines the map 
of complexes $\ffi: C_*(R^{2n};r) \to C_{*}(R^{2n})$ of $\Sp_{2n-2r}(R)$-modules (where we have suppressed some of the entries $R^{2n}$)
$$\xymatrix{
 0 \ar[r]\ar[d] & \displaystyle \bigoplus_{i=1}^{2r+2}\Z[U^{(i)}_{2r+2}] \ar[d]^{(d^{\omit}_i)_i} \ar[rr]^{\hspace{1ex}((-1)^id_i)_i} && \Z  [U_{2r+1}(R^{2r})] \ar[d]^{d} \ar[r] & 0\ar[d] \\
\cdots\to \Z[U_{2r+2}] \ar[r]_d & \Z[U_{2r+1}]  \ar[rr]_d &&  \Z[U_{2r}(R^{2n})] \ar[r]_d  & \Z[U_{2r-1}]\to\cdots}$$
For $r=n$, we let $C_*(R^{2n},n)$ be the complex $\Z[U_{2n+1}(R^{2n})][2n]$ concentrated in degree $2n$ and define the map of complexes $\ffi:C_*(R^{2n},n) \to C_*(R^{2n})$ in degree $n$ as the map $d:\Z[U_{2n+1}(R^{2n})]\to \Z[U_{2n}(R^{2n})]$.
For $0 \leq r \leq n$, the pair 
$$(\eps,\ffi):(\Sp_{2n-2r}(R), C_*(R^{2n};r)) \longrightarrow (\Sp_{2n}(R), C_{*}(R^{2n}))$$
defines a map of associated group homology spectral sequences 
\begin{equation}
\label{eqn:SpSeqMapp}
E_{p,q}^s(R^{2n};r) \longrightarrow E_{p,q}^s(R^{2n})
\end{equation}
resulting from the filtrations by degree $C_{\leq q}(R^{2n};r)$ and $C_{\leq q}(R^{2n})$ of the coefficient complexes $C_*(R^{2n};r)$ and $C_{*}(R^{2n})$.
By definition, we have
$$E_{p,q}^s(R^{2n};r)=0,\hspace{3ex} q\neq 2r,2r+1.$$
In particular, the spectral sequences $E(R^{2n};r)$ degenerate at the $E^2$-page.

The following result shows that the spectral sequence (\ref{eqn:E1spseq}) degenerates at $E^2$.

\begin{proposition}
\label{prop:Esurjective}
Let $R$ be a local ring with infinite residue field.
For all integers $0 \leq r \leq n$, $s=2$, $q=2r, 2r+1$ and all $p\in \Z$, the map (\ref{eqn:SpSeqMapp}) is surjective:
$$E_{p,q}^2(R^{2n};r) \twoheadrightarrow E_{p,q}^2(R^{2n}),\hspace{2ex}q=2r, 2r+1.$$
In particular, the spectral sequence (\ref{eqn:E1spseq}) degenerates at $E^2$.
\end{proposition}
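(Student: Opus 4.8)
The first step is to observe that the displayed surjectivity formally forces degeneration. Fix $p,q$ with $0\le q\le 2n$ and let $r\in\{0,\dots,n\}$ be the unique index with $q\in\{2r,2r+1\}$. Since $E(R^{2n};r)$ is concentrated in columns $2r,2r+1$, it has $E^k(R^{2n};r)=E^2(R^{2n};r)$ and all its differentials $d^k$, $k\ge 2$, vanish. Arguing by induction on $k\ge 2$, assume $d^2(R^{2n})=\dots=d^{k-1}(R^{2n})=0$, so $E^k(R^{2n})=E^2(R^{2n})$ and $E^k_{p,q}(R^{2n};r)\twoheadrightarrow E^k_{p,q}(R^{2n})$ is still a surjection; as $q-k\notin\{2r,2r+1\}$, the map \eqref{eqn:SpSeqMapp} together with its commutation with $d^k$ and this surjectivity give $d^k_{p,q}(R^{2n})=0$. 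Hence \eqref{eqn:E1spseq} degenerates at $E^2$, and it remains to prove the surjectivity. Throughout I use: the identification $E^1_{p,q}(R^{2n})\cong H_p(\Sp_{2n-q})\otimes\Z[\Skew^+_q]$ with $d^1=\eps_*\otimes d$ (Corollary~\ref{cor:E1IdentWellDef}, Lemma~\ref{lem:commdiag}); Shapiro's lemma and the orbit descriptions, giving $E^1_{p,2r}(R^{2n};r)\cong H_p(\Sp_{2n-2r})\otimes\Z[U_{2r+1}(R^{2r})]$ (trivial action) and $E^1_{p,2r+1}(R^{2n};r)\cong\bigoplus_{i=1}^{2r+2}H_p(\Sp_{2n-2r-1})\otimes\Z[U_{2r+2}(R^{2r})]$; acyclicity of $\Z[\Skew^+_*]$ (Lemma~\ref{lem:SkewExact}); surjectivity of the Gram map in lengths $2r+1$ and $2r+2$ (Lemmas~\ref{lem:GammaBij},~\ref{lem:normalForm} and an extension argument); the splitting $H_p(\Sp_{2k+1})=H_p(\Sp_{2k})\oplus\tilde{H}_p(\Sp_{2k+1})$ with $H_p(\Sp_{2k})\hookrightarrow H_p(\Sp_{2k+1})$ injective and $\tilde{H}_p(\Sp_{2k+1})\to H_p(\Sp_{2k+2})$ zero (Corollary~\ref{cor:DirSumDec}, Lemma~\ref{lem:tildtoIszero}); and quasi-linearity of $\tilde{H}_p(\Sp_{2k+1})$ with the Limit Theorem~\ref{thm:LimitThm} in the form of Remark~\ref{rmk:qlinTrivAction}.

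\textbf{The column $q=2r$.} Tracing $\ffi$ through the identifications, the map $E^1_{p,2r}(R^{2n};r)\to E^1_{p,2r}(R^{2n})$ becomes $\mathrm{id}\otimes(d\circ\Gamma)$, with $\Gamma\colon\Z[U_{2r+1}(R^{2r})]\twoheadrightarrow\Z[\Skew^+_{2r+1}]$ surjective and $d\colon\Z[\Skew^+_{2r+1}]\to\Z[\Skew^+_{2r}]$. Its image is therefore $H_p(\Sp_{2n-2r})\otimes d\big(\Z[\Skew^+_{2r+1}]\big)=H_p(\Sp_{2n-2r})\otimes\ker\big(d|_{\Z[\Skew^+_{2r}]}\big)$ by acyclicity of $\Z[\Skew^+_*]$. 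Splitting $\Z[\Skew^+_{2r}]$ along $d$ and using that $H_p(\Sp_{2n-2r})\hookrightarrow H_p(\Sp_{2n-2r+1})$ is injective, one checks that this same group equals $\ker\big(d^1\colon E^1_{p,2r}(R^{2n})\to E^1_{p,2r-1}(R^{2n})\big)$. So the image of $E^1_{p,2r}(R^{2n};r)$ already exhausts $\ker d^1$, hence a fortiori its image in $E^2_{p,2r}(R^{2n})=\ker d^1/\im d^1$ is everything; since the map factors through $E^2_{p,2r}(R^{2n};r)$, the induced map on $E^2$ is onto. (For $r=n$ only the column $q=2n$ occurs and the same computation applies.)

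\textbf{The column $q=2r+1$ --- the main point.} Here $r<n$. Using the splitting of $H_p(\Sp_{2n-2r-1})$, that $\eps_*$ kills $\tilde{H}_p(\Sp_{2n-2r-1})$, and acyclicity, one computes
$$E^2_{p,2r+1}(R^{2n})\ \cong\ \tilde{H}_p(\Sp_{2n-2r-1})\otimes\Z[\Skew^+_{2r+1}]\ \oplus\ \ker\!\big((\eps^{2n-2r}_{2n-2r-2})_*\big)\otimes S,$$
where $S$ is a $d$-complement of $\ker\big(d|_{\Z[\Skew^+_{2r+1}]}\big)$. The key structural fact is that $d^1\colon E^1_{p,2r+1}(R^{2n};r)\to E^1_{p,2r}(R^{2n};r)$ is, on the $i$-th summand, $\eps_*\otimes\big((-1)^{i}d_i\big)$ with $\eps_*\colon H_p(\Sp_{2n-2r-1})\to H_p(\Sp_{2n-2r})$; hence for every $\beta$ in $\ker(\eps_*)=\ker\!\big((\eps^{2n-2r}_{2n-2r-2})_*\big)\oplus\tilde{H}_p(\Sp_{2n-2r-1})$ and every tuple $(u_i)_i$, the element $(\beta\otimes u_i)_i$ lies in $\ker d^1=E^2_{p,2r+1}(R^{2n};r)$. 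Matching the Shapiro identification with the odd--length identification $f_w=(\eps\circ c_{\det w},w)_*$ exactly as in the proof of Lemma~\ref{lem:vIndep}, one finds
$$\ffi\big((\beta\otimes u_i)_i\big)\ =\ \textstyle\sum_{i}\sum_{j\ne i}(-1)^{j+1}\,(c_{\det(d_jv_i)})^{-1}_*\beta\ \otimes\ (\Gamma u_i)^{\wedge}_j,\qquad v_i=\left(\begin{smallmatrix}u_i\\ (e_1)_i\end{smallmatrix}\right),$$
where $c_a$ acts trivially on the summand $H_p(\Sp_{2n-2r-2})$ and through the $(R,\cdot,1)$-action on $\tilde{H}_p(\Sp_{2n-2r-1})$ (Example~\ref{ex:SpQlinear}). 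For $\beta\in\ker\!\big((\eps^{2n-2r}_{2n-2r-2})_*\big)$ there is no twist, and as $(u_i)_i$ varies these images fill out $\ker\!\big((\eps^{2n-2r}_{2n-2r-2})_*\big)\otimes\Z[\Skew^+_{2r+1}]$ modulo boundaries --- using that $d_i$ applied to the Gram image of $\Z[U_{2r+2}(R^{2r})]$ is all of $\Z[\Skew^+_{2r+1}]$ and that $\sum_i d(\Gamma u_i)\in\ker d$ --- so the second summand of $E^2_{p,2r+1}(R^{2n})$ is hit. Consequently the cokernel $Q$ of $E^2_{p,2r+1}(R^{2n};r)\to E^2_{p,2r+1}(R^{2n})$ is a quotient of the quasi-linear $\Z_0[R]$-module $\tilde{H}_p(\Sp_{2n-2r-1})\otimes\Z[\Skew^+_{2r+1}]$, hence is itself a quasi-linear $\Z_0[R]$-module; by Remark~\ref{rmk:qlinTrivAction} it remains only to show that $R^*$ acts trivially on $Q$.

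\textbf{Expected obstacle.} Showing that $R^*$ acts trivially on $Q$ is the crux, and is where the Limit Theorem is genuinely used. The relations $\ffi\big((\beta\otimes u_i)_i\big)=0$ in $Q$ --- valid for all $\beta\in\tilde{H}_p(\Sp_{2n-2r-1})$ and all $(u_i)_i$ --- contain, as $(u_i)_i$ varies, enough twisted copies $\langle c\rangle\beta\otimes A$ to force $\langle c\rangle$ to act as the identity on $\beta\otimes A$. To extract this, one runs the $u_i$ through a one-parameter family $u_i(t)$ with $t$ in a dense region of $R$, arranged so that the determinants $\det(d_jv_i(t))=\pm\det(d_id_ju_i(t))$ are polynomials in $t$ with unit values; the resulting map $t\mapsto\ffi\big((\beta\otimes u_i(t))_i\big)$ is then an admissible $\Z_0[R]$-valued function annihilating the relevant class, and passing to a suitable limit (normalising the determinants to $1$) strips off the twists by $\langle\det(d_jv_i)\rangle$ and yields the required relation. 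I expect the delicate, if routine, ingredients to be precisely this determinant/limit bookkeeping, together with the careful identification of $\ffi$ on the $E^1$-page of the odd column (matching the Shapiro description of $E^1_{p,2r+1}(R^{2n};r)$ with the $c_{\det w}$-twisted description of $E^1_{p,2r+1}(R^{2n})$, in the spirit of Lemma~\ref{lem:vIndep}); the reduction to surjectivity, the even column, and the structural input from Corollary~\ref{cor:DirSumDec} and the acyclicity lemmas are essentially formal.
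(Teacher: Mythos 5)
Your overall architecture matches the paper's: reduce degeneration to the surjectivity statement (the paper doesn't spell this out but it is exactly the formal argument you give); identify the $E^1$ map with $1\otimes(d\circ\Gamma)$ in the even column and conclude via acyclicity of $\Z[\Skew^+_*]$ and split-injectivity of $\eps_*$; in the odd column, decompose $H_p(\Sp_{2n-2r-1})=H_p(\Sp_{2n-2r-2})\oplus\tilde H_p(\Sp_{2n-2r-1})$ so the target of the left vertical map of Lemma~\ref{lem:E1DiagIsomTo} splits into an ``untwisted'' piece and a quasi-linear piece; and use the Limit Theorem~\ref{thm:LimitThm} to kill the obstruction coming from the quasi-linear piece. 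Your formula $E^2_{p,2r+1}(R^{2n})\cong\tilde H_p(\Sp_{2n-2r-1})\otimes\Z[\Skew^+_{2r+1}]\oplus\ker((\eps^{2n-2r}_{2n-2r-2})_*)\otimes S$ is correct.

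However, there are two genuine gaps, both in the odd column.

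First, the untwisted ($H_p(\Sp_{2n-2r-2})$) piece is not in fact ``essentially formal.'' You assert that the images $\ffi((\beta\otimes u_i)_i)$ for $\beta\in\ker((\eps^{2n-2r}_{2n-2r-2})_*)$ fill out the required subgroup modulo boundaries, citing only that $d_i\circ\Gamma$ is surjective and that $d\Gamma(u_i)\in\ker d$. That is not yet an argument: one needs to show that a specific diagram of complexes (the paper's diagram~(\ref{eqn:ABdiagram})) is surjective on homology, and the paper devotes Lemma~\ref{lem:2ndDiagSurj} to this. The lemma reduces to a clean split-module computation after replacing $\Z[\Skew^+_{2r}]$ by the image $F$ of $d$; what you need to observe, and haven't, is that the map $(\Gamma(d)+(-1)^i\Gamma(d_i))_i$ from $\bigoplus_i\Z[U_{2r+2}(R^{2r})]$ onto $F$ splits compatibly with $(-1)^id_i$ onto $\Z[U_{2r+1}(R^{2r})]$, which is why the homology map is onto. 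This is elementary but it is a real step, not a formality.

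Second, and more seriously, the crux --- killing the quasi-linear piece --- is not done; you label it an ``expected obstacle'' and call the remaining work ``routine determinant/limit bookkeeping.'' It is not routine. In the paper this is Lemma~\ref{lem:Htilde} via Lemma~\ref{lem:DetMgenerates}, which runs a descending induction on $\ell$ from $r$ down to $1$, at each stage introducing two extra free parameters $(s,t)$, verifying that every Pfaffian and determinant equation in~(\ref{eqn:DetPfxZero}) is a non-degenerate affine condition on $(s,t)$ (this requires casework on the subsets $I$ and a small computation with orthogonal complements), applying the Limit Theorem~\ref{thm:LimitThm} twice at each stage ($t\to\infty$, then $\gamma\to 0$ after multiplying through by $\langle\gamma\rangle^2$), and finally evaluating an explicit $4\times 4$ determinant at $\ell=1$ to produce the unit $\langle ac\rangle^{-2}$. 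Your proposed reformulation --- show $R^*$ acts trivially on the cokernel $Q$ and invoke Remark~\ref{rmk:qlinTrivAction} --- is a consistent target, but you give no mechanism for producing the required relations $\langle c\rangle\cdot(\beta\otimes A)=\beta\otimes A$ in $Q$, and any such mechanism would have to confront essentially the same determinant matrix $M(U,x)$ and the same induction. In short: the reduction, the even column, and the identification of $\tilde H_p(\Sp_{2n-2r-1})\otimes\Z[\Skew^+_{2r+1}]$ as the obstruction are correct and align with the paper, but the two lemmas that carry the actual weight of the proposition (Lemmas~\ref{lem:2ndDiagSurj} and~\ref{lem:DetMgenerates}) are respectively waved at and left open.
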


\begin{proof}[Proof of Proposition \ref{prop:Esurjective} for $q=2r$]
The map $E^1_{p,2r}(R^{2n};r) \to  E^1_{p,2r}(R^{2n})$ is the first map in the complex
$$\xymatrix{
H_p(\Sp_{2n-2r})  \otimes \Z[U_{2r+1}(R^{2r})]  \ar[r]^{1\otimes d\circ \Gamma} & H_p(\Sp_{2n-2r}) \otimes  \Z[\Skew_{2r}^+] \ar[d]^{\eps_*\otimes d}\\
&   H_p(\Sp_{2n-2r+1})\otimes \Z[\Skew_{2r-1}^+];
}$$
see Corollary \ref{cor:E1IdentWellDef}.
In view of Lemma \ref{lem:commdiag}, the second map in that complex is $d^1_{p,2r}: E^1_{p,2r}(R^{2n}) \to E^1_{p,2r-1}(R^{2n})$.
Since $\eps_*:H_p(\Sp_{2n-2r}) \to H_p(\Sp_{2n-2r+1})$ is (split) injective,
Lemmas \ref{lem:GammaBij} and \ref{lem:SkewExact} imply that this complex is exact.
It follows that $E^1_{p,2r}(R^{2n};r)$ surjects onto the kernel of the right vertical map which which surjects onto $E^2_{p,2r}(R^{2n})$.
In particular, its quotient $E^2_{p,2r}(R^{2n};r)$ surjects onto $E^2_{p,2r}(R^{2n})$.
\end{proof}

The case $q=2r+1$ of Proposition \ref{prop:Esurjective} is somewhat more involved except when $r=n$ in which case the map $0=E^1_{p,2n+1}(R^{2n};n) \to E^1_{p,2n+1}(R^{2n})=0$ is clearly surjective.
So assume $0\leq r<n$.
For $i=1,..., 2r+2$, consider the map 
\begin{equation}
\label{eqn:map4.1}
\xymatrix{
\gamma_i:H_p(\Sp_{2n-2r-1}) \otimes \Z[U_{2r+2}(R^{2r})]  
\ar[r]
&
H_p(\Sp_{2n-2r-1}) \otimes \Z[\Skew^+_{2r+1}] 
}
\end{equation}
which for $u\in U_{2r+2}(R^{2r})$ and $\alpha \in H_p(\Sp_{2n-2r-1})$ is defined by
$$\gamma_i(\alpha \otimes u) = \sum_{1\leq j \neq i \leq 2r+2}(-1)^{j+1} \left(c^{-1}_{\delta_{ij}\det (u^{\omit}_{ij})}\right)_*(\alpha)\otimes d_j\Gamma(u)$$
where
$u^{\omit}_{ij}$ is obtained from $u$ by omitting the $i$-th and $j$-th columns, 
$c_a$ is conjugation with the diagonal matrix $(a,a^{-1},1,...,1) \in \Sp_{2n-2r}(R)$ for $a\in R^*$,
and $\delta_{ij}$ is defined by
$$\delta_{ij}= \left\{\begin{array}{clc} (-1)^{i+1}&, & i<j\\ 0&, & i=j \\ (-1)^{i}&, & i>j\end{array}\right.:
\hspace{6ex}
(\delta_{ij})=\left(\begin{smallmatrix} 
0 & + & + & + & + & \cdots & +\\
+ & 0 & - & -  &  - & \cdots &  -\\
-  & - & 0 & + & + & \cdots & +\\
+ & + & + & 0  &  - & \cdots &  -\\
   &    &    &     &  & \ddots &  
\end{smallmatrix}
\right).
$$

\begin{lemma}
\label{lem:E1DiagIsomTo}
The commutative diagram
$$\xymatrix{
E_{p,2r+1}^1(R^{2n};r) \ar[r]^{d^1} \ar[d]_{(\text{\ref{eqn:SpSeqMapp}})} & E_{p,2r}^1(R^{2n};r)\ar[d]^{(\text{\ref{eqn:SpSeqMapp}})} \\
E_{p,2r+1}^1(R^{2n})  \ar[r]_{d^1} & E_{p,2r}^1(R^{2n})}
$$
is isomorphic to the commutative diagram
$$\xymatrix{
\displaystyle \bigoplus_{i=1}^{2r+2} H_p(\Sp_{2n-2r-1}) \otimes \Z[U_{2r+2}(R^{2r})]  
\ar[rrr]^{((-1)^i\eps_*\otimes d_{i})_i}
\ar[d]_{\gamma=(\gamma_1,\gamma_2,...,\gamma_{2r+2})}&&&
H_p(\Sp_{2n-2r}) \otimes \Z[U_{2r+1}(R^{2r})]
\ar[d]^{1\otimes \Gamma(d) }\\
H_p(\Sp_{2n-2r-1}) \otimes \Z[\Skew^+_{2r+1}]  
\ar[rrr]_{\eps_*\otimes d}  
& && 
H_p(\Sp_{2n-2r}) \otimes \Z[\Skew^+_{2r}].}
$$
\end{lemma}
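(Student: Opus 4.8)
The plan is to compute each of the four maps of the lower diagram separately, identifying the $E^1$-terms of $E(R^{2n};r)$ by Shapiro's lemma and those of $E(R^{2n})$ by Corollary~\ref{cor:E1IdentWellDef}. Since $\Sp_{2n-2r}(R)$ acts trivially on $\Z[U_{2r+1}(R^{2r})]$ one has $E^1_{p,2r}(R^{2n};r)=H_p(\Sp_{2n-2r})\otimes\Z[U_{2r+1}(R^{2r})]$. The orbit decomposition $\Sp_{2n-2r}\backslash U^{(i)}_{2r+2}(R^{2n})\cong U_{2r+2}(R^{2r})$, with stabiliser at $\binom{u}{(e_1)_i}$ equal to $\Sp_{2n-2r-1}=\{N\mid Ne_1=e_1\}$ for every $i$, gives $E^1_{p,2r+1}(R^{2n};r)\cong\bigoplus_i H_p(\Sp_{2n-2r-1})\otimes\Z[U_{2r+2}(R^{2r})]$, the identification on the $u$-summand of the $i$-th factor being $\alpha\mapsto(\iota,\binom{u}{(e_1)_i})_*(\alpha)$ with $\iota$ the inclusion of the stabiliser. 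A key observation is that $\iota$ followed by the standard embedding $\Sp_{2n-2r}\hookrightarrow\Sp_{2n}$ equals the standard embedding $\eps\colon\Sp_{2n-2r-1}\hookrightarrow\Sp_{2n}$, independently of $i$; all the $i$- and $j$-dependence in the left vertical map will sit in the vectors $d_jv$ below.

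The two $d^1$'s are the easy part. The $d^1$ of $E(R^{2n};r)$ is induced by the differential $((-1)^id_i)_i$ of $C_*(R^{2n};r)$; on the orbit of $u$ the map $d_i$ sends $\binom{u}{(e_1)_i}$ to $d_iu$ and, as the target $\Z[U_{2r+1}(R^{2r})]$ carries the trivial action, factors through the augmentation $\Z[\Sp_{2n-2r}/\Sp_{2n-2r-1}]\to\Z$, which induces on homology the map of the inclusion $\Sp_{2n-2r-1}\hookrightarrow\Sp_{2n-2r}$. Hence $d^1$ becomes $((-1)^i\eps_*\otimes d_i)_i$. Under the identifications of Corollary~\ref{cor:E1IdentWellDef}, the $d^1$ of $E(R^{2n})$ becomes $\eps_*\otimes d$ by Lemma~\ref{lem:commdiag}. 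This accounts for the top and bottom rows.

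For the right vertical map, the degree-$2r$ component of $\ffi$ is $d\colon\Z[U_{2r+1}(R^{2r})]\to\Z[U_{2r}(R^{2r})]\subset\Z[U_{2r}(R^{2n})]$. For $u\in U_{2r+1}(R^{2r})$ each $d_ju$ generates $R^{2r}$ and $\Gamma(d_ju)=d_j\Gamma(u)$, so composing the change of groups $\Sp_{2n-2r}\hookrightarrow\Sp_{2n}$ with the insertion $1\mapsto d_ju$ is precisely the map $f_{d_ju}$ of Corollary~\ref{cor:E1IdentWellDef} (no determinant twist appears, $2r$ being even). Thus $\alpha\otimes u\mapsto\sum_j(-1)^{j+1}f_{d_ju}(\alpha)$, which by Corollary~\ref{cor:E1IdentWellDef} equals $\alpha\otimes\sum_j(-1)^{j+1}d_j\Gamma(u)=\alpha\otimes d\Gamma(u)$, i.e. the map $1\otimes\Gamma(d)$.

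The left vertical map is the heart of the matter. Unwinding Shapiro's lemma and the change of groups, on the $u$-summand of the $i$-th factor it is $\alpha\mapsto\sum_{j\neq i}(-1)^{j+1}(\eps,d_jv)_*(\alpha)$ with $v=\binom{u}{(e_1)_i}$ and $\eps$ the standard embedding. The four ingredients are: (a) $\Gamma(v)=\Gamma(u)$, immediate from the shape of $v$, whence $\Gamma(d_jv)=d_j\Gamma(u)\in\Skew^+_{2r+1}(R)$; (b) $d_jv$ generates $R^{2r+1}$, since the $2r$ columns $v_k$ with $k\neq i,j$ form the basis $u^{\omit}_{ij}$ of $R^{2r}$ while $v_i=u_i+e_{2r+1}$ supplies the remaining basis vector; (c) Laplace expansion of $\det(d_jv)$ along its last row gives $\det(d_jv)=\delta_{ij}\det(u^{\omit}_{ij})\in R^*$, the sign $\delta_{ij}$ recording the position of the distinguished column $v_i$ inside $d_jv$ (namely $i$ when $j>i$ and $i-1$ when $j<i$); (d) writing $a=\det(d_jv)$, from $f_{d_jv}=(\eps\circ c_a,d_jv)_*$ and $\eps=(\eps\circ c_a)\circ c_{a^{-1}}$ one obtains $(\eps,d_jv)_*=f_{d_jv}\circ(c_{a^{-1}})_*=f_{d_jv}\circ(c^{-1}_{\delta_{ij}\det(u^{\omit}_{ij})})_*$. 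Combining (a)--(d) with Corollary~\ref{cor:E1IdentWellDef} turns $(\eps,d_jv)_*(\alpha)$ into $(c^{-1}_{\delta_{ij}\det(u^{\omit}_{ij})})_*(\alpha)\otimes d_j\Gamma(u)$, so summing over $j$ yields exactly $\gamma_i(\alpha\otimes u)$. Commutativity of the target square is then inherited from that of the $E^1$-square of the map of spectral sequences~(\ref{eqn:SpSeqMapp}). The main obstacle is parts (c) and (d): pinning down the checkerboard sign $\delta_{ij}$ exactly, and converting the unnormalised Shapiro map $(\eps,-)_*$ into the normalised map $f_{(-)}$ at the cost of a conjugation automorphism — for which one invokes Lemma~\ref{lem:vIndep}.
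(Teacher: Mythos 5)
Your proposal is correct and takes essentially the same route as the paper's proof. Both use Shapiro's Lemma to identify $E^1_{p,2r+1}(R^{2n};r)$ as $\bigoplus_{i=1}^{2r+2}\bigoplus_{u\in U_{2r+2}(R^{2r})}H_p(\Sp_{2n-2r-1})$, recognize that the top and bottom arrows are handled by Corollary \ref{cor:E1IdentWellDef} and Lemma \ref{lem:commdiag}, and reduce the left vertical map to the determinant computation $\det\bigl(d_j\bigl(\begin{smallmatrix}u\\(e_1)_i\end{smallmatrix}\bigr)\bigr)=\delta_{ij}\det(u^{\omit}_{ij})$ together with Lemma \ref{lem:vIndep} to pass from $(\eps,-)_*$ to $f_{(-)}$ via the conjugation $c_{\det(-)}$; you simply spell out in slightly more detail (the Laplace expansion, the span of $d_jv$, $\Gamma(v)=\Gamma(u)$) what the paper compresses into a single commutative square.
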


\begin{proof}
The right vertical and the lower horizontal map have already been identified in Lemmas \ref{cor:E1IdentWellDef} and \ref{lem:commdiag}.
For the other two maps, we note that 
$$E_{p,2r+1}^1(R^{2n};r) = \bigoplus_{i=1}^{2r+2} H_p(\Sp_{2n-2r},\Z[U^{(i)}_{2r+2}(R^{2n})]).$$
By Shapiro's Lemma, we obtain the isomorphism
$$ \sum_u \left(\eps, \left(\begin{smallmatrix}u \\ (e_1)_i \end{smallmatrix}\right)\right)_*:\bigoplus_{u \in U_{2r+2}(R^{2r})} H_p(\Sp_{2n-2r-1}) \stackrel{\cong}{\longrightarrow}   H_p(\Sp_{2n-2r},\Z[U^{(i)}_{2r+2}(R^{2n})])$$
This yields the identification of the top horizontal map.
Composing with the map $E^1_{p,2r+1}(R^{2n};r) \to E^1_{p,2r+1}(R^{2n})$ yields the map
\begin{equation}
\label{eqn:SumUSpSeqMap2}
 \bigoplus_{u \in U_{2r+2}(R^{2r})} H_p(\Sp_{2n-2r-1}) \longrightarrow   H_p(\Sp_{2n},\Z[{U}_{2r+1}(R^{2n})])
\end{equation}
which is
$$
\sum_{1\leq j \neq i \leq 2r+2}(-1)^{j+1} \left(\eps, d_j\left(\begin{smallmatrix}u \\ (e_1)_i \end{smallmatrix}\right)\right)_*
$$
on the component corresponding to $u\in U_{2r+2}(R^{2r})$.
We recall the isomorphism
\begin{equation}
\label{eqn:E1iso}
 \bigoplus_{A \in \Skew_{2r+1}^+} H_p(\Sp_{2n-2r-1}) \stackrel{\cong}{\longrightarrow} H_p(\Sp_{2n},\Z[{U}_{2r+1}(R^{2n})])
 \end{equation}
from Lemma \ref{cor:E1IdentWellDef} which is 
$(\eps\circ c_{\det v},v)_*$ on the component corresponding to $A \in \Skew_{2r+1}^+(R)$
where
$v\in U_{2r+1}(R^{2n})$ satisfies $\Gamma(v)=A$ and generates $R^{2r+1}$.
For $u\in U_{2r+2}(R^{2r})$ and $j\neq i$, the unimodular sequence $w=d_j \left(\begin{smallmatrix} u \\ (e_1)_i\end{smallmatrix}\right)$ generates $R^{2r+1}$.
Since 
$$\det w = \det \left( d_j \left(\begin{smallmatrix} u \\ (e_1)_i\end{smallmatrix}\right) \right) =
 \delta_{ij} \det u^{\omit}_{ij},$$
the diagram
$$\xymatrix{
H_p(\Sp_{2n-2r-1}) \ar[d]_{(c^{-1}_{\delta_{ij}\det u^{\omit}_{ij}})_*} \ar[rr]^{(\eps, \left(\begin{smallmatrix} u \\ (e_1)_i\end{smallmatrix}\right))_*} &&
H_p(\Sp_{2n-2r},\Z[U^{(i)}_{2r+2}]) \ar[d]^{d_j} \\
H_p(\Sp_{2n-2r-1}) \ar[rr]_{(\eps \circ c_{\det w},w)_*} &&
H_p(\Sp_{2n},\Z[U_{2r+1}])
}$$
commutes.
Since 
$$\Gamma(w) = \Gamma\left( d_j \left(\begin{smallmatrix} u \\ (e_1)_i\end{smallmatrix}\right) \right) = \Gamma( d_ju) $$
we apply Lemma \ref{lem:vIndep} to identifies the left vertical map in the lemma with $\gamma$.
\end{proof}

\begin{proof}[Proof of Proposition \ref{prop:Esurjective} for $q=2r+1$]
We need to show that the map of horizontal complexes 
$$\xymatrix{
0 \ar[r] \ar[d] & {E}_{p,2r+1}^1(R^{2n};r) \ar[r]^{d^1} \ar[d]_{(\text{\ref{eqn:SpSeqMapp}})} &
{E}_{p,2r}^1(R^{2n};r) \ar[d]^{(\text{\ref{eqn:SpSeqMapp}})} \\
E_{p,2r+2}^1(R^{2n})\ar[r]_{d^1}  & E_{p,2r+1}^1(R^{2n})\ar[r]_{d^1}  & E_{p,2r}^1(R^{2n})}$$
is surjective on homology (at the middle term).
By Corollary \ref{cor:DirSumDec} and Lemma \ref{lem:E1DiagIsomTo}, this map of complexes is isomorphic to the direct sum of 
\begin{equation}
\label{eqn:Htilde}
\xymatrix{
0 \ar[r] \ar[d] & \displaystyle \bigoplus_{i=1}^{2r+2} \widetilde{H}_p(\Sp_{2n-2r-1}) \otimes \Z[U_{2r+2}(R^{2r})] \ar[d]_{\gamma} \ar[r] & 0 \ar[d]&\\
0 \ar[r] & 
\widetilde{H}_p(\Sp_{2n-2r-1}) \otimes \Z[\Skew^+_{2r+1}] 
\ar[r] & 0 & \text{and}
}\end{equation}
\begin{equation}
\label{eqn:ABdiagram}
\xymatrix{
0 \ar[r] \ar[d] & \displaystyle \bigoplus_{i=1}^{2r+2} A \otimes \Z[U_{2r+2}(R^{2r})] \ar[d]^{\hspace{0ex}1\otimes (\Gamma(d)+(-1)^i\Gamma(d_i))_i } \ar[rr]^{(-1)^i\eps_*\otimes d_i} && B \otimes \Z[U_{2r+1}(R^{2r})] \ar[d]^{1\otimes \Gamma(d)}\\
A \otimes \Z[\Skew^+_{2r+2}]  \ar[r]_{1\otimes d} & 
A \otimes \Z[\Skew^+_{2r+1}] 
\ar[rr]_{\eps_*\otimes d} && B\otimes \Z[\Skew^+_{2r}]}
\end{equation}
where $A=H_p(\Sp_{2n-2r-2})$ and $B=H_p(\Sp_{2n-2r})$.
For the latter, we use that $c_a$ is the identity on $A$.
Proposition \ref{prop:Esurjective} now follows from Lemmas \ref{lem:2ndDiagSurj} and \ref{lem:Htilde} below.
\end{proof}

\begin{lemma}
\label{lem:2ndDiagSurj}
The map of complexes (\ref{eqn:ABdiagram}) is surjective in homology.
\end{lemma}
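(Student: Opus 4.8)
The goal is to show that (\ref{eqn:ABdiagram}) induces a surjection on homology at the middle term, i.e.\ on $H_{2r+1}$. The plan is to first compute that homology for the bottom complex explicitly and then produce, for each of a generating set, an honest cycle of the top complex mapping onto it. Write $S_q=\Z[\Skew^+_q(R)]$ and $Z_q=\ker(d\colon S_q\to S_{q-1})$. By Lemma \ref{lem:SkewExact} the complex $S_\bullet$ is exact, so $Z_q=\im(d\colon S_{q+1}\to S_q)$, and since $S_q/Z_q$ is free each $Z_q$ is a direct summand of $S_q$. Using this, the $\Z$-flatness of all $S_q$, and the factorisation $\eps_*\otimes d=(\eps_*\otimes 1)(1\otimes d)$, a short diagram chase identifies the homology of $A\otimes S_{2r+2}\to A\otimes S_{2r+1}\to B\otimes S_{2r}$ at the middle with $\ker(\eps_*\colon A\to B)\otimes Z_{2r}$, the isomorphism being induced by $1\otimes d$; in particular this group is generated by the classes of the elements $a\otimes C$ with $a\in\ker(\eps_*)$ and $C\in S_{2r+1}$ (such an $a\otimes C$ is killed by $\eps_*\otimes d$ because $\eps_*(a)=0$), and the class of $a\otimes C$ corresponds to $a\otimes dC$.

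Next I would fix such an $a$ and, writing $C=\sum_l n_l C_l$ with $C_l\in\Skew^+_{2r+1}(R)$, use Lemma \ref{lem:GammaBij} to pick $v_l\in U_{2r+1}(R^{2r})$ with $\Gamma(v_l)=C_l$, extend each $v_l$ to $u_l=(v_l,w_l)\in U_{2r+2}(R^{2r})$, and set $\xi=\sum_l n_l\,a\otimes u_l$, placed in the $i=2r+2$ summand of $\bigoplus_{i=1}^{2r+2}A\otimes\Z[U_{2r+2}(R^{2r})]$ (the other summands zero). Then $\xi$ is a cycle of the top complex, since its image under the top differential is $(-1)^{2r+2}\sum_l n_l\,\eps_*(a)\otimes d_{2r+2}u_l=\sum_l n_l\,\eps_*(a)\otimes v_l=0$. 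Applying the left vertical map of (\ref{eqn:ABdiagram}) and using $\Gamma(d_j u_l)=d_j\Gamma(u_l)$ and $d_{2r+2}\Gamma(u_l)=\Gamma(v_l)=C_l$ gives $\xi\mapsto \sum_l n_l\,a\otimes\big(d\Gamma(u_l)+C_l\big)=\sum_l n_l\,a\otimes d\Gamma(u_l)+a\otimes C$. Each $\Gamma(d_j u_l)$ lies in $\Skew^+_{2r+1}(R)$ (being the Gram matrix of a subsequence of $u_l$, which is a member of $U_{2r+1}(R^{2r})$), so $d\Gamma(u_l)\in S_{2r+1}$; since $d\,(d\Gamma(u_l))=0$, it lies in $Z_{2r+1}=\im(d\colon S_{2r+2}\to S_{2r+1})$, so $\sum_l n_l\,a\otimes d\Gamma(u_l)$ is a boundary in the bottom complex. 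Hence the image of $\xi$ represents the class of $a\otimes C$, which corresponds to $a\otimes dC$; as these classes generate $\ker(\eps_*)\otimes Z_{2r}$, the map (\ref{eqn:ABdiagram}) is surjective on $H_{2r+1}$.

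The only step that is not pure formalism is the extension $v_l\rightsquigarrow u_l$, i.e.\ finding $w_l\in R^{2r}$ so that adjoining it to $v_l\in U_{2r+1}(R^{2r})$ preserves all the unimodularity and non-degeneracy conditions defining $U_{2r+2}(R^{2r})$. This amounts to choosing $w_l$ outside finitely many proper closed subsets of $\mathbb{A}^{2r}$ over the residue field (the loci where certain minors, resp.\ Pfaffians of the relevant Gram matrices, reduce to zero); each such locus is genuinely proper because the corresponding single condition is separately solvable, and a common $w_l$ then exists because the residue field is infinite — the argument is of the same flavour as the inductive step in the proof of Lemma \ref{lem:normalForm}. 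I expect this to be the main obstacle to write carefully; everything else reduces to the acyclicity of $\Z[\Skew^+_*(R)]$, the vanishing $\eps_*(a)=0$, and sign bookkeeping.
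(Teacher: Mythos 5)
Your proof is correct and is built from the same three ingredients as the paper's, but it executes them in a noticeably more constructive way. The paper successively replaces the bottom row of (\ref{eqn:ABdiagram}) by quotients to reach (\ref{eqn:EKerSurj1}), identifies the auxiliary square (\ref{eqn:EKerSurj2}) of free abelian groups with a split one, and reads off the map on homology after tensoring with $\eps_*\colon A\to B$. You instead compute the homology of the bottom complex directly as $\ker(\eps_*)\otimes Z_{2r}$ --- which is the paper's $\ker(\eps_*)\otimes F$, since $F=\im(d)=Z_{2r}$ by Lemma~\ref{lem:SkewExact} --- and then exhibit, for each generating class $[a\otimes C]$, an explicit cycle in the top complex mapping onto it, concentrated in the $i=2r+2$ summand where $\eps_*(a)=0$ kills the horizontal differential. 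Both arguments rest on acyclicity of $\Z[\Skew^+_*(R)]$, surjectivity of $\Gamma\colon\Z[U_{2r+1}(R^{2r})]\to\Z[\Skew^+_{2r+1}(R)]$, and surjectivity of the face maps $d_i\colon U_{2r+2}(R^{2r})\to U_{2r+1}(R^{2r})$; the ``main obstacle'' you flag (extending $v_l$ to $u_l\in U_{2r+2}(R^{2r})$) is exactly this last surjectivity, which the paper also invokes without proof, so you lose nothing there and your sketch of the generic-choice argument over the infinite residue field is the right one. One small abuse of notation to fix when writing this up: $\Gamma(u_l)$ is not itself an element of $\Skew^+_{2r+2}(R)$ (the full Gram matrix of $2r+2$ vectors in $R^{2r}$ is always degenerate), so ``$d\Gamma(u_l)$'' must be understood as the formal sum $\sum_j(-1)^{j+1}\Gamma(d_ju_l)\in\Z[\Skew^+_{2r+1}(R)]$; the relations $d\circ d=0$ and hence $d\Gamma(u_l)\in Z_{2r+1}$ still hold by the simplicial identities, so the argument is unaffected. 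Your version is the more transparent of the two; the paper's is slicker once the splitting of (\ref{eqn:EKerSurj2}) is in hand.
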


\begin{proof}
Let $F$ be the image of the map $\Gamma(d):\Z[U_{2r+1}(R^{2r})] \to \Z[\Skew_{2r}^+]$.
This is a free $\Z$-module, and it is also the image of $d:\Z[\Skew^+_{2r+1}] \to \Z[\Skew_{2r}^+]$.
In the diagram (\ref{eqn:ABdiagram}), we can replace $\Z[\Skew_{2r}^+]$ with $F$ and 
the lower left horizontal arrow $1\otimes d$ with its cokernel $0 \to \coker (1\otimes d)$ without changing homology since that cokernel is  $A \otimes F$, by Lemma \ref{lem:SkewExact}.
Thus, we can replace the diagram (\ref{eqn:ABdiagram}) with the diagram
\begin{equation}
\label{eqn:EKerSurj1}
\xymatrix{
0 \ar[r] \ar[d] & \displaystyle \bigoplus_{i=1}^{2r+2} A \otimes \Z[U_{2r+2}(R^{2r})] \ar@{->>}[d]^{\hspace{0ex}1\otimes (\Gamma(d)+(-1)^i\Gamma(d_i))_i } \ar[rr]^{(-1)^i\eps_*\otimes d_i} && B \otimes \Z[U_{2r+1}(R^{2r})] \ar@{->>}[d]^{1\otimes \Gamma(d)}\\
0  \ar[r]& 
A \otimes F
\ar[rr]_{\eps_*\otimes 1} && B\otimes F}
\end{equation}
without changing homology.
The right hand square is obtained by tensoring the diagram of free abelian groups
\begin{equation}
\label{eqn:EKerSurj2}
\xymatrix{
 \displaystyle \bigoplus_{i=1}^{2r+2}  \Z[U_{2r+2}(R^{2r})] 
 \ar@{->>}[d]_{(\Gamma(d)+(-1)^i\Gamma(d_i))_i } \ar@{->>}[rr]^{(-1)^i d_i} && \Z[U_{2r+1}(R^{2r})] \ar@{->>}[d]^{ \Gamma(d)}\\
  F
\ar[rr]_{1} &&  F}
\end{equation}
with the map $\eps_*:A \to B$.
The top horizontal arrow in (\ref{eqn:EKerSurj2}) is surjective
because the maps $d_i:U_{2r+2}(R^{2r}) \to U_{2r+1}(R^{2r})$ are surjective.
Since all abelian groups in diagram (\ref{eqn:EKerSurj2}) are free, that diagram is isomorphic to
$$\xymatrix{
M\oplus N \ar[r]^{(1,0)} \ar@{->>}[d]_{(f,0)} & M \ar@{->>}[d]^{f}\\
F \ar[r]_1 & F}$$
where $M=  \Z[U_{2r+1}(R^{2r})]$ and $N$ is the kernel of the top horizontal arrow.
It follows that diagram (\ref{eqn:EKerSurj1}) is isomorphic to
$$\xymatrix{
0 \ar[d] \ar[r] & A \otimes (M\oplus N) \ar[rr]^{\eps_*\otimes (1,0)} \ar@{->>}[d]_{1\otimes (f,0)} && B \otimes M \ar@{->>}[d]^{1\otimes f}\\
0 \ar[r] & A \otimes F \ar[rr]_{\eps_*\otimes 1} && B \otimes F.}$$
Hence the map on homology (kernels of right horizontal maps) is
$$ (1 \otimes f, 0):  \left(\ker(\eps_*) \otimes M\right) \oplus (A \otimes N) \longrightarrow \ker(\eps_*) \otimes F$$
which is surjective since $f$ is.
\end{proof}

For a $\Z_0[R]$-module $H$, define the map of $\Z_0[R]$-modules, generalising (\ref{eqn:map4.1}),
\begin{equation}
\label{eqn:gammaH}
\gamma = (\gamma_1,\gamma_2...,\gamma_{2r+2}):\bigoplus_{i=1}^{2r+2}H \otimes_{\Z} \Z[U_{2r+2}(R^{2r})]  \to H\otimes_{\Z} \Z[\Skew^+_{2r+1}(R)] 
\end{equation}
by
\begin{equation}
\label{eqn:Skew0deltaHatMap3}
\gamma_i(h\otimes u) = \sum_{1\leq j \neq i \leq 2r+2}(-1)^{j+1}\langle \delta_{ij}{\det}^{-1}  u^{\omit}_{ij}\rangle \cdot h\otimes \Gamma(d_ju)
\end{equation}
for $u\in U_{2r+2}(R^{2r})$ and $h\in H$.
Recall from Lemma \ref{lem:tildtoIszero} that the relative homology groups $\widetilde{H}_p(\Sp_{2n+1}(R))=H_p(\Sp_{2n+1}(R),\Sp_{2n}(R))$ are quasi-linear $\Z_0[R]$-modules.

\begin{lemma}
\label{lem:Htilde}
Let $R$ be a local ring with infinite residue field, and let $r\geq 0$ be an integer.
Then for all quasi-linear $\Z_0[R]$-modules $H$, the map (\ref{eqn:gammaH}) is surjective.
In particular, the map of complexes (\ref{eqn:Htilde}) is surjective in homology.
\end{lemma}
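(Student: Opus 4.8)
The plan is to prove $\coker(\gamma)=0$; granting this, the map of complexes (\ref{eqn:Htilde}) is surjective in homology because both of its rows have zero differentials, so the induced map on homology is $\gamma$ itself.

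\emph{Reduction and quasi-linearity.} The $\Z_0[R]$-action on $H\otimes_{\Z}\Z[\Skew^+_{2r+1}(R)]$, and on $\bigoplus_iH\otimes_{\Z}\Z[U_{2r+2}(R^{2r})]$, is induced from the action on the tensor factor $H$, the combinatorial factor carrying the trivial action; this is what makes $\gamma$ a map of $\Z_0[R]$-modules. Hence, for every polynomial $p$ and every sufficiently long sequence $a=(a_1,\dots,a_m)$ witnessing quasi-linearity of $H$, the element $\sigma=s_p(a)-\langle p(0)\rangle$ satisfies $\sigma^{-1}\bigl(H\otimes_{\Z}\Z[\Skew^+_{2r+1}(R)]\bigr)=(\sigma^{-1}H)\otimes_{\Z}\Z[\Skew^+_{2r+1}(R)]=0$, so $H\otimes_{\Z}\Z[\Skew^+_{2r+1}(R)]$ is quasi-linear, and so is its quotient $C:=\coker(\gamma)$, quasi-linear modules forming a Serre subcategory. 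Since $H\otimes_{\Z}\Z[\Skew^+_{2r+1}(R)]=\bigoplus_AH\otimes A$, it suffices to show $h\otimes A\in\im(\gamma)$ for every $A\in\Skew^+_{2r+1}(R)$ and $h\in H$. For $r=0$ this is immediate, since $U_2(R^0)$ and $\Skew^+_1(R)$ are singletons and $\gamma$ is the surjection $(h_1,h_2)\mapsto h_2-h_1\colon H\oplus H\to H$; so assume $r\ge1$.

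\emph{A parametrised family.} Fix $A$ and $h$ and a normal form $v=(v_1,\dots,v_{2r+1})\in U_{2r+1}(R^{2r})$ of $A$ (Lemma \ref{lem:normalForm}); recall $v$ spans $R^{2r}$. For a parameter $t$ set $u(t)=(v_1,\dots,v_{2r+1},\,x_0+t\,x_1)$ with $x_1\in R^{2r}\setminus\{0\}$ and $x_0$ chosen generically; a general-position argument over the infinite residue field shows that the finitely many polynomial conditions in $t$ ensuring $u(t)\in U_{2r+2}(R^{2r})$ hold on a dense region $\D\subset R$. Then $d_{2r+2}u(t)=v$, so $\Gamma(d_{2r+2}u(t))=A$ identically, while for each $j\le2r+1$ the matrix $\Gamma(d_ju(t))$ is a nonconstant function of $t$ (as $\{v_k\}_{k\ne j}$ spans $R^{2r}$, no nonzero vector is orthogonal to all of them under the nondegenerate form). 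Evaluating $\gamma_i(h\otimes u(t))$ for some index $i\le2r+1$ via (\ref{eqn:Skew0deltaHatMap3}), the $j=2r+2$ term is $\pm\langle c_0\rangle\,h\otimes A$ with $c_0\in R^*$ the inverse of the $2r\times2r$ minor of $u(t)$ obtained by deleting columns $i$ and $2r+2$ (hence $t$-independent, these being the columns $v_k$, $k\ne i$), and the $j\le2r+1$, $j\ne i$ terms are $\pm\langle g_k(t)\rangle\,h\otimes A_k(t)$ with $g_k(t)$ the inverse of the corresponding minor (for generic $x_1$, a polynomial of degree $1$ in $t$ that is a unit on $\D$) and $A_k(t)=\Gamma(d_ju(t))$. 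Since $\gamma_i(h\otimes u(t))$ lies in $\im(\gamma)$, we obtain, for all $t\in\D$, a relation in $C$
$$\langle c_0\rangle\,h\otimes A\;=\;\sum_{k}\pm\,\langle g_k(t)\rangle\,h\otimes A_k(t),$$
with $\lim_{t\to\infty}g_k(t)=0$; and since $A_k(t)$ is nonconstant, $A_k(t)=A$ for only finitely many $t$, so we may shrink $\D$ so that $A_k(t)\ne A$ throughout.

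\emph{Passing to the limit.} This is the heart of the argument and the point at which the Limit Theorem \ref{thm:LimitThm} is used. One combines the relation above — together with the analogous relations obtained by running the same construction for normal forms of the auxiliary matrices $A_k(t)$, and/or with the appended vector placed in a different slot of $u(t)$ — to cancel the auxiliary terms $h\otimes A_k(t)$ and arrive at a relation $f(t)\cdot\overline{h\otimes A}\equiv0$ in $C$ valid on a dense region, where $f$ is an admissible function with $\lim_{t\to\infty}f(t)$ a unit of $R$. Applying the Limit Theorem to the quasi-linear module $C$ and the element $x=\overline{h\otimes A}$ then yields $\lim_{t\to\infty}f(t)\in\sqrt{\ann(x)}$, and since that limit is a unit this forces $x=0$, i.e.\ $h\otimes A\in\im(\gamma)$. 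Carrying out the cancellation — organising the auxiliary families and the algebra so that everything collapses onto a single $f(t)\cdot\overline{h\otimes A}$ — is the main obstacle; I expect it to run by induction on $r$ (base case $r=0$ as above) or along the normal-form stratification of $\Skew^+_{2r+1}(R)$. (Showing directly that $R^*$ acts trivially on $C$, which by Remark \ref{rmk:qlinTrivAction} would give $C=0$, does not seem more tractable.)
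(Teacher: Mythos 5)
Your reduction to showing $\coker(\gamma)=0$ and your observation that $C=\coker(\gamma)$ is quasi-linear (being a quotient of the quasi-linear module $H\otimes_{\Z}\Z[\Skew^+_{2r+1}(R)]$ by a $\Z_0[R]$-linear map) are both correct and agree with the paper. Your $r=0$ base case is also fine. The strategy of producing, for each generator $h\otimes A$, an admissible $f(t)$ with $f(t)\cdot\overline{h\otimes A}=0$ on a dense region and a unit as $\lim_{t\to\infty}f(t)$, then invoking the Limit Theorem, is exactly how the paper concludes. So the overall architecture of the argument is right.

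The gap is in the step you yourself flag as ``the heart of the argument'': you never explain how to eliminate the auxiliary terms $h\otimes A_k(t)$ and arrive at a relation involving only $\overline{h\otimes A}$. This is not a routine bookkeeping step; it is the key trick, and it does not come from combining relations over several parametrised families or from an induction on $r$ of the kind you guess at. The paper instead observes that for a \emph{single} $u\in U_{2r+2}(R^{2r})$ one has, simultaneously for all $i=1,\dots,2r+2$, the relation $\gamma_i(h\otimes u)=0$ in $C$. Writing these $2r+2$ relations as a linear system $M(u)\cdot X(u)=0$ with $M(u)=\bigl(\langle\delta_{ij}\det^{-1}u^{\omit}_{ij}\rangle\bigr)_{ij}\in M_{2r+2}(\Z_0[R])$ and $X(u)_j=(-1)^{j+1}\,h[\Gamma(d_ju)]$, one multiplies by the adjugate of $M(u)$ to obtain the \emph{scalar} relation
$$\det M(u)\cdot h[\Gamma(d_ju)]=0\in C\quad\text{for each }j.$$
Taking $u=(U,x)$ with $\Gamma(U)=B$ and $j=2r+2$ gives $\det M(U,x)\cdot h[B]=0$, i.e.\ $\det M(U,x)\in\sqrt{\ann(h[B])}$ for \emph{every} admissible $x$. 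Only after this adjugate collapse does one run a parametrised-family/limit argument (the paper's Lemma \ref{lem:DetMgenerates}, a descending induction in $\ell$ from $r$ to $1$ using the Limit Theorem at each stage) to show that these determinants generate the unit ideal inside $\sqrt{\ann(h[B])}$, hence $h[B]=0$. Without the adjugate step, the relation you write down couples $h\otimes A$ to the moving matrices $A_k(t)$, and there is no visible mechanism to decouple them, so the proof as proposed does not close.
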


\begin{proof}
We may write $h[B]$ in place of $h\otimes B$.
Denote by $N=\coker(\gamma)$ the cokernel of $\gamma$.
We have to show that $N=0$.
As a cokernel of a $\Z_0[R]$-linear map of quasi-linear modules, $N$ is also quasi-linear.
In $N$, the expressions on the right hand side of (\ref{eqn:Skew0deltaHatMap3}) are zero.
In matrix form, the system of equations, expressing the right hand side of (\ref{eqn:Skew0deltaHatMap3}) as zero, can be written as $M(U)\cdot X(U)=0$ for $U\in U_{2r+2}(R^{2r})$ and $h\in H$ where
$$M(U)=\begin{pmatrix}\langle\delta_{ij} {\det}^{-1}U^{\wedge}_{i,j}\rangle \end{pmatrix}$$ is the $(2r+2)\times (2r+2)$ matrix with entries in $\Z_0[R]$ which has $0$'s on the diagonal and 
$\langle\delta_{ij}{\det}^{-1}U^{\wedge}_{i,j}\rangle$ at the $i,j$-spot, and $X(U)=( (-1)^{j+1}z\ [\Gamma(U^{\wedge}_{j})])$ is the column vector with $(-1)^{j+1}z\ [\Gamma(U^{\wedge}_{j})]$ at its $j$-th entry.
Multiplying with the adjugate of $M(U)$ yields the equation
$(\det M(U))\ h\ [\Gamma(U^{\wedge}_{j})] = 0 \in N$.
Thus, for $h\in H$ and $B\in \Skew_{2r+1}^+(R)$ we have 
\begin{equation}
\label{eqn:MuB0}
(\det M(U,x))\cdot h\cdot [B] =0  \in N
\end{equation}
 for all $U\in U_{2r+1}(R^{2r})$, $x\in R^{2r}$ such that $\Gamma(U)=B$ and $(U,x)\in U_{2r+2}(R^{2r})$.
The following Lemma \ref{lem:DetMgenerates} therefore shows that $h [B]= 0\in N$ for all $h\in H$ and $B\in \Skew_{2r+1}^+(R)$, that is, the map $\gamma$ in Lemma \ref{lem:Htilde} is surjective.
\end{proof}

\begin{lemma}
\label{lem:DetMgenerates}
For all $B\in \Skew_{2r+1}^+(R)$ and $h\in H$,
the radical of the annihilator ideal 
$$\sqrt{\ann(h [B])} \subset \Z_0[R]$$
of $h[B] \in N=\coker(\gamma)$ is the unit ideal.
\end{lemma}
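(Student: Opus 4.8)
The plan is to show that $\det M(U,x)$, as $(U,x)$ ranges over all non-degenerate unimodular extensions in $U_{2r+2}(R^{2r})$ of a fixed normal form $U$ of $B$, generates a set of elements of $\Z_0[R]$ whose radicals sum to the unit ideal, and then to invoke the Limit Theorem \ref{thm:LimitThm} applied to the quasi-linear module $N$ and the element $x = h[B]$. Concretely, I would fix $B \in \Skew_{2r+1}^+(R)$ and a normal form $U \in U_{2r+1}(R^{2r})$ with $\Gamma(U) = B$, so that $U$ spans $R^{2r+1}$... wait, $U$ has $2r+1$ columns but lives in $R^{2r}$, so I should instead fix $U \in U_{2r+1}(R^{2r})$ in normal form; the possible extensions $x \in R^{2r}$ making $(U,x)$ non-degenerate unimodular form a dense region $\D$ of $R^{2r}$, and I would parametrise a suitable line or curve inside $\D$ by a single variable $t$ so that $t \mapsto \det M(U, x(t))$ becomes an admissible function $f : \D' \to \Z_0[R]$ in one variable $t$, with $\D' \subset R$ a dense region. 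By \eqref{eqn:MuB0}, $f(t) \in \ann(h[B]) \subset \sqrt{\ann(h[B])}$ for all $t \in \D'$.

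The key computation is to choose the parametrisation $x(t)$ so that the minors $\delta_{ij}\det^{-1}(U,x(t))^{\wedge}_{i,j}$, which are the off-diagonal entries of $M(U,x(t))$, are rational functions of $t$ with unit denominators on $\D'$, so that $\det M(U, x(t))$ is genuinely admissible in the sense of Definition \ref{dfn:f(t)}; expanding the $(2r+2) \times (2r+2)$ determinant of a matrix with zero diagonal, every monomial in $\det M(U,x(t))$ is a product of entries $\langle \cdot \rangle$ coming from a permutation with no fixed points, so one gets a polynomial $P$ in the $\langle P_k(t)/Q_k(t)\rangle$. Then I would compute $\lim_{t \to a} f(t)$ for a well-chosen value $a \in R \cup \{\infty\}$ — engineering the parametrisation so that in the limit the matrix $M$ degenerates to one whose determinant is a unit times $\langle 1 \rangle$, i.e. so that $\lim_{t\to a} f(t)$ is a unit of $\Z_0[R]$ (e.g. $\pm\langle u\rangle$ for $u\in R^*$), and hence $\lim_{t\to a} f(t)^{-1} N = 0$ forces $h[B]$ to lie in the radical-of-zero, giving $\sqrt{\ann(h[B])} = \Z_0[R]$. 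The Limit Theorem is exactly what licenses passing the containment $f(t) \in \sqrt{\ann(h[B])}$ through to the limit value.

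The main obstacle I expect is the explicit choice of the curve $x(t)$ and the verification that $\det M(U,x(t))$ has a unit (or otherwise invertible) limit: this requires understanding how the signed minors $\delta_{ij}\det^{-1}(U,x)^{\wedge}_{i,j}$ vary with $x$, which in turn rests on the normal-form machinery of Lemma \ref{lem:normalForm} and on tracking the determinants $\det(U,x)^{\wedge}_j = \delta_{?j}\det(U,x)^{\wedge}_{?,j}$ that appeared in the proof of Lemma \ref{lem:vIndep}. A clean way to organise this is to first handle the case $r = 0$ by hand — there $M(U,x)$ is a $2\times 2$ antidiagonal matrix and $\det M$ is, up to sign and units, a single symbol $\langle t\rangle$ or $\langle 1 - t\rangle$, so one reduces literally to the computation in Remark \ref{rmk:qlinTrivAction} — and then to bootstrap to general $r$ by exhibiting, after a suitable change of basis in $\Sp$, an extension $x$ for which $M(U,x)$ is block-triangular with one $2\times 2$ antidiagonal block of the $r=0$ shape and the remaining structure contributing only units, so that $\det M(U,x(t))$ reduces to the $r=0$ case times a unit. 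The secondary technical point to be careful about is that $\D$ here is a dense region in $R^{2r}$ rather than in $R$, so one must check that restricting to a line through $\D$ still lands in a dense region of $R$ in the single variable $t$; this follows because the complement of $\D$ is cut out by the vanishing of finitely many of the minors $\det(U,x)^{\wedge}_{i,j}$, each a polynomial in the coordinates of $x$, so a generic line meets the complement in a set whose image in the residue field is finite.
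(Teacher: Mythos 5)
Your high-level strategy is the right one, and you've correctly located the key ingredients: start from the relation (\ref{eqn:MuB0}), parametrise the extensions $(U,x)$, and invoke the Limit Theorem \ref{thm:LimitThm} to push the containment $\det M(U,x) \in \sqrt{\ann(h[B])}$ through to a limit that is a unit of $\Z_0[R]$. Your $r=0$ calculation is fine, and your observation that restricting to a generic line through the dense region $\D\subset R^{2r}$ still gives a dense region in one variable is also correct.

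The genuine gap is in the passage from small $r$ to general $r$. You propose to reduce to $r=0$ ``by exhibiting, after a suitable change of basis in $\Sp$, an extension $x$ for which $M(U,x)$ is block-triangular with one $2\times 2$ antidiagonal block of the $r=0$ shape and the remaining structure contributing only units.'' This does not work as stated: the entries of $M(U,x)$ live in $\Z_0[R]$, and a change of basis in $\Sp_{2n}(R)$ acts on $U$ and $x$ over $R$, not on $M$ over $\Z_0[R]$, so there is no reason for such a change of basis to make $M$ block-triangular or to contribute only units to $\det M$. Moreover, one cannot simply \emph{choose} a convenient $x$: the relation (\ref{eqn:MuB0}) holds for all admissible $x$, but the determinant $\det M(U,x)$ never literally equals a unit for any admissible $x$ when $r\geq 1$; the unit only appears in a limit. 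What actually makes the argument go is a descending induction on a size parameter $\ell=r, r-1,\dots,1$: one considers the truncations $U(\ell)$ of the normal form $U$ (living in $U_{2\ell+1}(R^{2\ell})$) and shows $\det M(U(\ell),x) \in \sqrt{\ann(h[B])}$ for each $\ell$, with the base case $\ell=r$ being (\ref{eqn:MuB0}). Each inductive step $\ell+1 \rightsquigarrow \ell$ is itself a nontrivial cascade of limit computations — first a linear change of variables in a two-parameter family $(s,t)$, then $t\to\infty$, then a second limit $\gamma\to 0$ — which peels off a $2\times 2$ hyperbolic block and leaves $\det M(U(\ell),x)$ up to a unit. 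Only after reaching $\ell=1$ does a final limit computation produce a genuine unit $\langle ac\rangle^{-2}$, concluding the proof. Your proposal collapses this entire multi-step induction into a single ``well-chosen'' limit, and it is precisely the mechanism for making that single limit exist and give a unit that is missing. You should also be aware that at each stage one must check that the linear and Pfaffian equations cutting out the complement of the dense region are \emph{non-trivial} in the new parameters $(s,t)$, which is a substantial verification you have not addressed.
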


\begin{proof}
Denote by $F$ the residue field of $R$, and by $\bar{x}\in F^s$ the reduction modulo the maximal ideal of the element $x\in R^s$.

For $B\in \Skew_{2r+1}^+(R)$, choose a normal form $U=(u_1,...,u_{2r+1}) \in U_{2r+1}(R^{2r})$ of $B$, that is, 
$\Gamma(U)=B$, $(u_1,...,u_{2r})$ is upper triangular, $(u_{2i-1})_{2i-1}=1$ and $(u_{2i})_{2i-1}=0$ for $i=1,...,r$; see Lemma \ref{lem:normalForm}.
For $\ell = 1,...,r$, the matrix $U(\ell)$ obtained from $U$ by deleting the first $2r - 2\ell$ rows and columns is in $U_{2\ell+1}(R^{2\ell})$.
Indeed, the sequence $U(\ell)$ is unimodular in $R^{2\ell}$ because $(u_1,...,u_{2r})$ is upper triangular and $(u_1,...,u_{2r},u_{2r+1})$ is unimodular.
It is non-generate as for $I\subset \{2r-2\ell +1,...,{2r+1}\}$ of even cardinality, the sequence $U(\ell)_{I-2r+2\ell}$ generates the orthogonal complement of $u_1,...,u_{2\ell}$ in the non-degenerate space generated by $(u_1,...,u_{2\ell},U_I)$ and is thus non-degenerate.

We will show by descending induction on $\ell = 1,...,r$ that 
\begin{equation}
\label{eqn:Surj:IndAss}
\det M(U(\ell),x) \in \sqrt{\ann(h [B])}
\end{equation}
for all $x\in R^{2\ell}$ such that $(U(\ell),x) \in U_{2\ell+2}(R^{2\ell})$.

The case $\ell = r$ is (\ref{eqn:MuB0}).
Let $\ell \in \{1,...,r-1\}$ and assume (\ref{eqn:Surj:IndAss}) holds for $\ell +1$ in place of $\ell$.
We want to show that (\ref{eqn:Surj:IndAss}) holds for $\ell$.
Fix $x\in R^{2\ell}$ such that $(U(\ell),x) \in U_{2\ell+2}(R^{2\ell})$.
For $\xi = (s,t,x)\in R \times R \times R^{2\ell}$, the matrix
$$(U(\ell+1),\xi) = 
\left(
\renewcommand\arraystretch{1.5}
\begin{array}{cc|ccc|c}
1 & 0 & \cdots & \ast & \cdots & s\\
0 & \alpha & \cdots & \ast & \cdots& t \\
\hline
0 & 0 & &&& x_1\\
\vdots & \vdots & & U(\ell) && \vdots \\
0 & 0 & &&& x_{2\ell}
\end{array}
\right)
$$
is in $U_{2\ell+4}(R^{2\ell +2})$ if and only if 
for all $1 \leq i<j \leq 2\ell+3$, the square matrix 
 $$(U(\ell+1)^{\wedge}_{ij},\xi)$$
  is invertible, and for all $I \subset \{1,...,2\ell+3\}$ of odd cardinality $<2\ell+2$, the subspace spanned by $(U(\ell+1)_I,\xi)$ is non-degenerate.
This happens if and only if $\bar{s},\bar{t}\in F$ is not a solution to any of the equations in $F$
\begin{equation}
\label{eqn:DetPfxZero}
L_{ij}(s,t):=\det (U(\ell+1)^{\wedge}_{ij},\xi)=0\text{ and }\Pf(\Gamma(U(\ell+1)_I,\xi))=0
\end{equation}
where $1 \leq i<j \leq 2\ell+3$ and $I \subset \{1,...,2\ell+3\}$ of odd cardinality $<2\ell+2$.
Here, $\Pf(A)$ denotes the Pfaffian of a skew-symmetric matrix $A$.
The equations in (\ref{eqn:DetPfxZero}) are linear and homogeneous in $\xi$, hence, linear (possibly inhomogeneous) in $(s,t)\in R^2$.

We check that every equation in (\ref{eqn:DetPfxZero}) is non-trivial in $(s,t)$, that is, that for each equation in (\ref{eqn:DetPfxZero}), there is $(s,t)\in R^2$ for which the left-hand side of that equation does not vanish in $F$.
We start by investigating the Pfaffian equations. 
Let $I \subset \{1,...,2\ell+3\}$ be a subset of odd cardinality $<2\ell+2$.
By abuse of notation I will label the columns of $U(\ell)$ by $(U_3(\ell),...,U_{2r+3}(\ell))$ 
so that $U(\ell)_J$ is obtained from $U(\ell+1)_J$ by deleting the first two rows provided $J\subset \{3,4,...,2\ell+3\}$.
If $1,2 \in I$, then the subspace spanned by $(U(\ell+1)_I,\xi)$ is non-degenerate as it equals the subspace generated by 
$$\left(
\renewcommand\arraystretch{1.5}
\begin{array}{cc|ccc|c}
1 & 0 &  & 0 &  & 0\\
0 & \alpha &  & 0 & & 0 \\
\hline
0 & 0 & & U(\ell)_{I-\{1,2\}} && x 
\end{array}
\right)
$$
which is non-degenerate since $(U(\ell),x)\in U_{2\ell+2}(R^{2\ell})$.
Hence, $\Pf(U(\ell+1)_I,\xi)$ is a unit in $R$ for all $s,t\in R$.
If $1\in I$ but $2 \notin I$ then the subspace spanned by $(U(\ell+1)_I,\xi)$ equals the subspace spanned by
$$\left(
\renewcommand\arraystretch{1.5}
\begin{array}{c|c|ccc}
1  & 0&  & 0 &  \\
\hline
0  & t&  & y &  \\
\hline
0  & x& & U(\ell)_{I-\{1\}} & 
\end{array}
\right)
$$
which has Gram matrix
$$\left(
\renewcommand\arraystretch{1.5}
\begin{array}{c|c|c}
0  & t& y   \\
\hline
-t  & 0&\langle x,U(\ell)_{I-\{1\}}\rangle   \\
\hline
-^t\!y  & -^t\!\langle x,U(\ell)_{I-\{1\}}\rangle &  \Gamma(U(\ell)_{I-\{1\}})  
\end{array}
\right)
$$
with Pfaffian $t\Pf(\Gamma(U(\ell)_{I-\{1\}})) + c$ where $c$ does not depend on $t$.
Since $\Pf(\Gamma(U(\ell)_{I-\{1\}}))\neq 0 \in F$, for all $s\in R$ there is a $t\in R$ such that $\Pf(U(\ell+1)_I,\xi)$ is a unit in $R$.
If $2\in I$ but $1 \notin I$ then the subspace spanned by $(U(\ell+1)_I,\xi)$ equals the subspace spanned by
$$\left(
\renewcommand\arraystretch{1.5}
\begin{array}{c|c|ccc}
0  & s &  & y &  \\
\hline
\alpha  & 0 &  & 0 &  \\
\hline
0  & x& & U(\ell)_{I-\{2\}} & 
\end{array}
\right)
$$
since $\alpha \in R^*$. 
This has Gram matrix
$$\left(
\renewcommand\arraystretch{1.5}
\begin{array}{c|c|c}
0  & -\alpha s& -\alpha y   \\
\hline
\alpha s  & 0&\langle x,U(\ell)_{I-\{2\}}\rangle   \\
\hline
\alpha ^t\!y  & -^t\!\langle x,U(\ell)_{I-\{2\}}\rangle &  \Gamma(U(\ell)_{I-\{2\}})  
\end{array}
\right)
$$
with Pfaffian $-\alpha s\Pf(\Gamma(U(\ell)_{I-\{2\}})) + c$ where $c$ does not depend on $s$.
Since $\alpha \Pf(\Gamma(U(\ell)_{I-\{2\}}))\neq 0 \in F$, for all $t$ there is $s$ such that 
$\Pf(U(\ell+1)_I,\xi)$ is a unit in $R$.
If $1,2\notin I$, assume first that $|I| \neq 2\ell +1$, hence $1 \leq |I|\leq 2\ell -1$.
Let $J \subset I$ be the subset obtained from $I$ by deleting its maximal element.
Then 
\begin{equation}
\label{eqn:SomeGramUell}
(U(\ell+1)_I,\xi)=
\left(
\renewcommand\arraystretch{1.5}
\begin{array}{ccc|c|c}
  & v &  &a & s\\
 & w & & b & t \\
\hline
 & U(\ell)_J && y & x 
\end{array}
\right)
\end{equation}
We need to find $s,t\in R$ such that the Pfaffian of (\ref{eqn:SomeGramUell}) is a unit in $R$, that is, such that the columns of (\ref{eqn:SomeGramUell}) span a non-degenerate subspace of $R^{2\ell +2}$.
Since $U(\ell)_J$ spans a non-degenerate subspace of $R^{2\ell}$, there are unique $a_j,b_j\in R$, $ j \in J$, such that 
$$\langle U_i(\ell),x\rangle = \sum_{j\in J}a_j\langle U_i(\ell),U_j(\ell)\rangle, \hspace{3ex}
\langle U_i(\ell),y\rangle = \sum_{j\in J}b_j\langle U_i(\ell),U_j(\ell)\rangle$$
for all $i\in J$.
Set 
$$x_0=\sum_{j\in J}a_jU_j(\ell),\hspace{2ex} s_0 = \sum_{j\in J}a_jv_j,
\hspace{2ex} t_0 = \sum_{j\in J}a_jw_j,$$
$$y_0=\sum_{j\in J}b_jU_j(\ell),\hspace{2ex} a_0 = \sum_{j\in J}b_jv_j,
\hspace{2ex} b_0 = \sum_{j\in J}b_jw_j.$$
Then the columns of (\ref{eqn:SomeGramUell}) and those of
\begin{equation}
\label{eqn:SomeGramUell2}
\left(
\renewcommand\arraystretch{1.5}
\begin{array}{ccc|c|c}
  & v &  &a-a_0 & s-s_0\\
 & w & & b-b_0 & t-t_0 \\
\hline
 & U(\ell)_J && y-y_0 & x-x_0
\end{array}
\right)
\end{equation}
span the same subspace of $R^{2\ell+2}$.
Moreover, $y-y_0, x-x_0$ is a basis of the orthogonal complement of $U(\ell)_J$ inside the non-degenerate subspace of $R^{2\ell}$ generated by $(U(\ell)_J,y,x) = (U(\ell)_I,x)$.
In particular, $c:=\langle y-y_0,x-x_0\rangle\in R^*$. 
For $(s,t)=(s_0,t_0)$, the Gram-matrix of (\ref{eqn:SomeGramUell2}) is
$$\left(
\renewcommand\arraystretch{1.5}
\begin{array}{c|c|c}
  \Gamma (U(\ell+1)_J)& \ast & 0\\
 \hline \ast & 0 & c \\ \hline 0 & -c & 0
 \end{array}
\right)
$$
which has Pfaffian $c\Pf(U(\ell +1)_J) \in R^*$.
In particular, the Pfaffian of (\ref{eqn:SomeGramUell}) is a unit for $(s,t)=(s_0,t_0)$.
If $|I|=2\ell+1$ (and $1,2\notin I$) then the space generated by $(U(\ell+1)_I,\xi)$ is non-degenerate if and only if the determinant $L_{12}(s,t)$ of $(U(\ell+1)_I,\xi)$ is a unit.
This is a special case of the linear equations $L_{ij}(s,t)$, $1 \leq i<j \leq 2\ell+3$, which we investigate now.
We have
$$L_{12}(s,t) = \det (U(\ell+1),\xi)^{\wedge}_{12} = as+bt +c$$
for some $c\in R$ where $a=-\det(A)$, $b=\det B$, 
$A$ is obtained from $U(\ell+1)^{\wedge}_{12}$ by deleting the first row, and
$B$ is obtained from $U(\ell+1)^{\wedge}_{12}$ by deleting the second row.
The matrices $A$ and $B$ are invertible because $U(\ell +1) \in U_{2\ell+3}(R^{2\ell+2})$, $U_1(\ell +1)=e_1$ and $U_2(\ell +1)=\alpha e_2$, $\alpha\in R^*$.
In particular, $a$ and $b$ are units, and there is $(s,t)\in R^2$ such that $L_{12}(s,t)\in R^*$.
For $i=1,2$ and $3 \leq j \leq 2\ell+3$, we have
$$L_{1j}(s,t)= a_{1j}s + c_{1j},\hspace{3ex}\text{and}\hspace{3ex}L_{2j}(s,t) = b_{2j}t +c_{2i},$$
where $a_{1j}=-\alpha\det U(\ell)^{\wedge}_{j}$ and $b_{2j}=\det U(\ell)^{\wedge}_{j}$ are units in $R$, and $c_{1j}, c_{2j}\in R$.
In particular, for $i=1,2$ and $3 \leq j \leq 2\ell+3$, there is $(s,t)\in R^2$ such that $L_{ij}(s,t)\in R^*$.
For $3 \leq i<j\leq 2r+1$, 
$$L_{ij}(s,t)=\alpha L_{i,j}(U(\ell),x).$$
does not depend on $s,t\in R$ and is a unit since $(U(\ell),x) \in U_{2\ell+2}(R^{2\ell})$.
Summarising, for every equation in (\ref{eqn:DetPfxZero}), there is $(s,t)\in R^2$ for which the left-hand side of that equation does not vanish in $F$.

From the computation of $L_{ij}(s,t)$ above, the matrix $M(U(\ell+1),\xi)$ is 
{\small
$$\left(
\renewcommand\arraystretch{2}
\begin{array}{cc|ccc}
0 &  \langle as+bt +c\rangle^{-1} & \cdots & \langle\delta_{1j}\rangle \langle a_{1j}s + c_{1j}\rangle^{-1}  \cdots  &\langle c_{1}\rangle^{-1}\\
\langle as+bt +c\rangle^{-1} & 0 & \cdots & \langle\delta_{2j}\rangle\langle b_{2j}t +c_{2j} \rangle^{-1}  \cdots &\langle -c_{2}\rangle^{-1}\\
\hline
\vdots&\vdots&&&\\
\langle\delta_{i1}\rangle \langle a_{1i}s + c_{1i}\rangle^{-1}&\langle\delta_{i2}\rangle\langle b_{2i}t +c_{2i} \rangle^{-1} &&\\
\vdots&\vdots&& \langle \alpha \rangle^{-1}\cdot M(U(\ell),x) &\\
\langle c_1\rangle^{-1}&\langle  c_2\rangle^{-1}&&&
\end{array}
\right)
$$}

\noindent
By assumption, it has determinant $g(s,t)=\det M(U(\ell+1),\xi)$ in $\sqrt{\ann(h[B])}$ for all $(\bar{s},\bar{t})\in F^2-S$ where $S$ is a finite union of affine subspaces of dimension $\leq 1$ defined by the equations (\ref{eqn:DetPfxZero}).
For $\gamma \in R^*$, consider the equation $\gamma = as+bt+c$ and note that for all but finitely many $\bar{\gamma}\in F^*$ the hyperplane $\bar{\gamma} = \bar{a}\bar{s}+\bar{b}\bar{t}+\bar{c}$ in $F^2$ is not entirely in $S$.
Then $s=a^{-1}(\gamma-c -bt)$ and $M(U(\ell+1),\xi)$ becomes
{\small
$$\left(
\renewcommand\arraystretch{2}
\begin{array}{cc|ccc}
0 & \langle \gamma\rangle^{-1} & \cdots &  \langle\delta_{1j}\rangle\langle \tilde{a}_{1j}t + \tilde{c}_{1j}\rangle^{-1}  \cdots  &\langle c_{1}\rangle^{-1}\\
\langle \gamma\rangle^{-1} & 0 & \cdots &  \langle\delta_{2j}\rangle\langle b_{2j}t +c_{2j} \rangle^{-1}  \cdots &\langle -c_{2}\rangle^{-1}\\
\hline
\vdots&\vdots&&&\\
\langle\delta_{i1}\rangle\langle \tilde{a}_{1i}t + \tilde{c}_{1i}\rangle^{-1} &\langle\delta_{i2}\rangle\langle b_{2i}t +c_{2i}\rangle^{-1}&&&\\
\vdots&\vdots&& \langle \alpha \rangle^{-1}\cdot M(U(\ell),x) &\\
\langle c_1\rangle^{-1}&\langle c_2 \rangle^{-1}&&&
\end{array}
\right)
$$
}

\noindent
where $\tilde{a}_{1j} = -a_{1j}b/a$ and $\tilde{c}_{1j} = c_{1j} + a_{1j}(\gamma-c)/a$.
Its determinant $f(t,\gamma) = g(a^{-1}(\gamma-c -bt),t)$ is in $\sqrt{\ann(z[B])}$ for all $\bar{t}\in F-S'$ for a finite set $S'\subset F$ (for fixed $\gamma$).
Since the coefficients $\tilde{a}_{1j}$ and $b_{2j}$ of $t$ are units in $R$, we can apply 
 the Limit Theorem \ref{thm:LimitThm} and find that $\lim_{t\to \infty}f(t,\gamma) \in \sqrt{\ann(h[B])}$ where
$$f(\gamma) = \lim_{t\to \infty}f(t,\gamma) = \det
\left(
\renewcommand\arraystretch{2}
\begin{array}{cc|ccc}
0 & \langle \gamma \rangle^{-1} &0 \cdots &0 \cdots  0&\langle c_{1}\rangle^{-1}\\
\langle \gamma \rangle^{-1} & 0 &0 \cdots & 0 \cdots 0&\langle -c_{2}\rangle^{-1}\\
\hline
0&0&&&\\
\vdots&\vdots&& \langle \alpha \rangle^{-1}\cdot M(U(\ell),x) &\\
0&0&&&\\
\langle c_{1}\rangle^{-1}&\langle c_{2}\rangle^{-1}&&&
\end{array}
\right)
$$
for all but finitely many $\bar{\gamma} \in F$.
Then
$$\langle \gamma\rangle^2 f(\gamma) = \det
\left(
\renewcommand\arraystretch{2}
\begin{array}{cc|ccc}
0 & 1 &0 \cdots &0 \cdots  0&\langle c_{1}\rangle^{-1}\langle \gamma \rangle\\
1 & 0 &0 \cdots & 0 \cdots 0&\langle -c_{2}\rangle^{-1}\langle \gamma \rangle\\
\hline
0&0&&&\\
\vdots&\vdots&& \langle \alpha \rangle^{-1}\cdot M(U(\ell),x) &\\
0&0&&&\\
\langle c_{1}\rangle^{-1}&\langle c_{1}\rangle^{-1}&&&
\end{array}
\right)
$$
is in $\sqrt{\ann(h[B])}$ for all but finitely an $\bar{\gamma} \in F$.
By the Limit Theorem \ref{thm:LimitThm}, the element
$$\lim_{\gamma \to 0}\langle \gamma\rangle^2 f(\gamma) = 
\det \left(
\renewcommand\arraystretch{2}
\begin{array}{cc|ccc}
0 & 1 &0 \cdots &0 \cdots  0&0\\
1 & 0 &0 \cdots & 0 \cdots 0& 0\\
\hline
0&0&&&\\
\vdots&\vdots&& \langle \alpha \rangle^{-1}\cdot M(U(\ell),x) &\\
0&0&&&\\
\langle c_{1}\rangle^{-1}&\langle c_{1}\rangle^{-1}&&&
\end{array}
\right)
$$
is also in $\sqrt{\ann(h[B])}$.
Hence, $-\langle \alpha\rangle^{-2\ell}\det M(U(\ell),x) \in \sqrt{\ann(z[B])}$ which implies
$\det M(U(\ell),x) \in \sqrt{\ann(h[B])}$ since $-\langle \alpha\rangle^{-2\ell}$ is a unit in $\Z_0[R]$.
This finishes the proof of (\ref{eqn:Surj:IndAss}) for $\ell = 1,...,r$.
In particular, it holds for $\ell =1$.

Finally, we investigate what (\ref{eqn:Surj:IndAss}) means for $\ell = 1$.
The given matrix
$$U(1)=\begin{pmatrix} 1 & 0 & b \\ 0 & a & c \end{pmatrix}$$
has $a,b,c\in R^*$ since it is in $U_3(R^2)$.
For $x=(s,t)\in R^2$, the matrix
$$(U(1),x) = \begin{pmatrix} 1 & 0 & b & s\\ 0 & a & c & t \end{pmatrix}$$
is in $U_4(R^2)$ if and only if $s,t,bt-cs\in R^*$.
Then $M(U(1),x) = (\langle\delta_{ij}{\det}^{-1}(U(1),x)^{\wedge}_{ij}\rangle)$ has determinant
$$f(s,t) = \det M(U(1),x) 
= \det \begin{pmatrix}
0 & \langle bt-cs\rangle^{-1} & \langle -as\rangle^{-1} & \langle -ab\rangle^{-1}\\
\langle bt-cs\rangle^{-1} & 0 & \langle -t\rangle^{-1} & \langle -c\rangle ^{-1}\\
 \langle as\rangle^{-1} & \langle -t\rangle^{-1} & 0 & \langle a \rangle^{-1}\\
  \langle -ab\rangle^{-1} &  \langle c\rangle ^{-1} & \langle a \rangle^{-1} & 0 
\end{pmatrix}
$$
in $\sqrt{\ann(h [B])}$ for all $s,t,\in R^*$ such that $bt-cs\in R^*$.
Setting $s=1$ then every $t\in R$ such that $\bar{t}\neq 0, \bar{c}/\bar{b}\in F$
has 
$$
f(1,t) 
= \det \begin{pmatrix}
0 & \langle bt-c\rangle^{-1} & \langle -a\rangle^{-1} & \langle -ab\rangle^{-1}\\
\langle bt-c\rangle^{-1} & 0 & \langle -t\rangle^{-1} & \langle -c\rangle ^{-1}\\
 \langle a\rangle^{-1} & \langle -t\rangle^{-1} & 0 & \langle a \rangle^{-1}\\
  \langle -ab\rangle^{-1} &  \langle c\rangle ^{-1} & \langle a \rangle^{-1} & 0 
\end{pmatrix}
$$
in $\sqrt{\ann(h [B])}$.
Since the coefficients $b$ an and $-1$ of $t$ are units in $R$, we can apply the Limit Theorem \ref{thm:LimitThm} and find that the element
$$\lim_{t \to \infty}f(1,t) = 
\det \begin{pmatrix}
0 &0 & \langle -a\rangle^{-1} & \langle -ab\rangle^{-1}\\
0 & 0 & 0 & \langle -c\rangle ^{-1}\\
 \langle a\rangle^{-1} & 0 & 0 & \langle a \rangle^{-1}\\
  \langle -ab\rangle^{-1} &  \langle c\rangle ^{-1} & \langle a \rangle^{-1} & 0 
\end{pmatrix}=
\langle ac\rangle^{-2}$$
is in $\sqrt{\ann(h [B])}$.
Since $\langle ac\rangle^{-2}$ is a unit in $\Z_0[R]$, the ideal $\sqrt{\ann(h [B])}$ is the unit ideal.
\end{proof}

\section{Homology stability}

In this section we prove the results announced in the Introduction.
The following proves Theorem \ref{thm:OptStabilityIntro}.

\begin{theorem}
\label{thm:OptStabilityText}
Let $R$ be a commutative local ring with infinite residue field and $n\geq 0$ an integer.
Then in the following sequence of integral homology groups, all maps are isomorphisms
$$H_{2n}(\Sp_{2n}R) \stackrel{\cong}{\longrightarrow}  H_{2n}(\Sp_{2n+1}R) \stackrel{\cong}{\longrightarrow} H_{2n}(\Sp_{2n+2}R) \stackrel{\cong}{\longrightarrow} \cdots
$$
and in the following sequence of integral homology groups, the first map is a surjection and all other maps are isomorphisms
$$H_{2n+1}(\Sp_{2n+1}R) \twoheadrightarrow  H_{2n+1}(\Sp_{2n+2}R) \stackrel{\cong}{\longrightarrow} H_{2n+1}(\Sp_{2n+3}R) \stackrel{\cong}{\longrightarrow} \cdots.
$$
Moreover, inclusion of groups induces a surjection 
$$H_{2n+1}(\Sp_{2n}(R)) \twoheadrightarrow H_{2n+1}(\Sp_{2n+2}(R)).$$
In particular, $H_i(\Sp_{2n}(R),\Sp_{2n-2}(R))=0$ for all $i<2n$.
\end{theorem}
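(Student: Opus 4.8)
The plan is to read off all four assertions from the behaviour of the spectral sequence (\ref{eqn:E1spseq}) on its $E^2$-page. By Proposition~\ref{prop:Esurjective} it degenerates at $E^2$, and by Lemma~\ref{lem:AbuttVanishing} its abutment vanishes in total degrees $<2n$; hence
$$E^2_{p,q}=0\qquad\text{whenever }\ p+q<2n .$$
Everything after that is bookkeeping built on this vanishing together with the splitting of Corollary~\ref{cor:DirSumDec}.

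The first step is to compute $E^2_{p,q}$ in the two columns $q=0$ and $q=1$. By Corollary~\ref{cor:E1IdentWellDef} one has $E^1_{p,q}\cong H_p(\Sp_{2n-q})\otimes_{\Z}\Z[\Skew^+_q(R)]$, and by Lemma~\ref{lem:commdiag} the differential $d^1$ out of column $q$ is $\eps_*\otimes d$. Since $\Skew^+_0(R)$ and $\Skew^+_1(R)$ are one-point sets, the bottom of the $E^1$-complex in each row $p$ is just $H_p(\Sp_{2n-1})\xrightarrow{\pm\eps_*}H_p(\Sp_{2n})$; moreover $d\colon\Z[\Skew^+_2]\to\Z[\Skew^+_1]$ is the zero map, since both faces of a non-degenerate $2\times 2$ skew matrix equal the unique element of $\Skew^+_1$. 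Hence
$$E^2_{p,0}=\coker\bigl(\eps_*\colon H_p(\Sp_{2n-1})\to H_p(\Sp_{2n})\bigr),\qquad E^2_{p,1}=\ker\bigl(\eps_*\colon H_p(\Sp_{2n-1})\to H_p(\Sp_{2n})\bigr),$$
and $E^2_{p,q}=0$ for $p+q<2n$ then says precisely that $\eps_*\colon H_p(\Sp_{2n-1})\to H_p(\Sp_{2n})$ is surjective for $p<2n$ and bijective for $p<2n-1$.

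The second step is to transport this through Corollary~\ref{cor:DirSumDec} and Lemma~\ref{lem:tildtoIszero}. Under $H_p(\Sp_{2n-1})=H_p(\Sp_{2n-2})\oplus\widetilde H_p(\Sp_{2n-1})$ the inclusion $\eps^{2n}_{2n-1}$ becomes $(\eps^{2n}_{2n-2},0)$; so its surjectivity is equivalent to surjectivity of $\eps^{2n}_{2n-2}$, and its bijectivity to $\widetilde H_p(\Sp_{2n-1})=0$ together with bijectivity of $\eps^{2n}_{2n-2}$. Thus $\eps^{2n}_{2n-2}\colon H_p(\Sp_{2n-2})\to H_p(\Sp_{2n})$ is surjective for $p<2n$ and bijective for $p<2n-1$, and, writing $2n-1=2m+1$, $\widetilde H_p(\Sp_{2m+1})=0$ for all $p\le 2m$. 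From these two facts the theorem follows: $H_i(\Sp_{2n},\Sp_{2n-2})=0$ for $i<2n$ drops out of the long exact sequence of the pair and the bijectivity/surjectivity of $\eps^{2n}_{2n-2}$; the surjection $H_{2n+1}(\Sp_{2n})\twoheadrightarrow H_{2n+1}(\Sp_{2n+2})$ is the surjectivity statement in degree $p=2n+1$ with $n$ replaced by $n+1$; and, since by Corollary~\ref{cor:DirSumDec} each inclusion $\Sp_{2m}\subset\Sp_{2m+1}$ induces the split injection $\binom{1}{0}$ while $\Sp_{2m+1}\subset\Sp_{2m+2}$ induces $(\eps^{2m+2}_{2m},0)$, the two displayed towers of stabilisation maps consist of isomorphisms (resp.\ a surjection followed by isomorphisms) exactly because $\widetilde H_p(\Sp_{2m+1})=0$ and $\eps^{2m+2}_{2m}$ is an isomorphism throughout the needed range $p\le 2m$.

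The substantive input is already in place — degeneration at $E^2$ (Proposition~\ref{prop:Esurjective}) and the identification of the $E^1$-page — so the only new computations are the two identifications $E^2_{p,0}=\coker\eps_*$ and $E^2_{p,1}=\ker\eps_*$, the crux being the vanishing of $d^1\colon\Z[\Skew^+_2]\to\Z[\Skew^+_1]$, after which everything is diagram chasing through Corollary~\ref{cor:DirSumDec}. The main obstacle is thus organisational rather than conceptual: one must pass repeatedly between the spectral sequence for $R^{2n}$ (which controls $\eps^{2n}_{2n-1}$ and $\eps^{2n}_{2n-2}$) and the one for $R^{2n+2}$ (needed to bring $\eps^{2n+2}_{2n}$ into play), keeping the index shifts aligned so that the stated ranges $i<2n$ and $p\le 2m$ come out sharp rather than off by one.
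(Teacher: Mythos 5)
Your argument is correct and follows the paper's proof in all essentials: degenerate the spectral sequence (Proposition~\ref{prop:Esurjective}), use vanishing of the abutment (Lemma~\ref{lem:AbuttVanishing}), observe that $d\colon\Z[\Skew^+_2]\to\Z[\Skew^+_1]$ is zero, extract surjectivity/bijectivity of $\eps_*\colon H_p(\Sp_{2n-1})\to H_p(\Sp_{2n})$ in the stated ranges from the bottom two rows of the $E^1$-page, and then feed this through the splitting of Corollary~\ref{cor:DirSumDec} and Lemma~\ref{lem:tildtoIszero} to get $\widetilde H_p(\Sp_{2m+1})=0$ and the stabilization isomorphisms. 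Your version merely makes the paper's implicit identifications $E^2_{p,0}=\coker(\eps_*)$ and $E^2_{p,1}=\ker(\eps_*)$ explicit, and spells out the long-exact-sequence step giving $H_i(\Sp_{2n},\Sp_{2n-2})=0$, which the paper leaves as an "in particular"; no genuine difference in route.
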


\begin{proof}
The case $n=0$ is clear, so assume $n\geq 1$.
The spectral sequence (\ref{eqn:E1spseq}) degenerates at the $E^2$-page (Proposition \ref{prop:Esurjective}).
By Lemma \ref{lem:AbuttVanishing}, we have $E^2_{p,q}(R^{2n})=0$ for $p+q<2n$.
Moreover, $0=d:\Z[\Skew^+_2(R)] \to \Z[\Skew_1^+(R)]$ forcing $d^1_{p,2}=0$ for all $p\in \Z$ (Lemma \ref{lem:commdiag}).
Therefore,
$d^1_{p,1}:E^1_{p,1}(R^{2n}) \to E^1_{p,0}(R^{2n})$ is an isomorphism for $p\leq 2n-2$ and a surjection for $p=2n-1$.
Hence,
$$H_{p}(\Sp_{2n-2}) \oplus \widetilde{H}_{p}(\Sp_{2n-1}) = H_{p}(\Sp_{2n-1}) \longrightarrow H_{p}(\Sp_{2n})$$
is an isomorphism for $p\leq 2n-2$ and a surjection for $p=2n-1$.
By Lemma \ref{lem:tildtoIszero}, the map is zero on the second summand.
In particular, 
$$\widetilde{H}_{p}(\Sp_{2n-1}) = 0 \hspace{4ex}\text{for}\hspace{2ex}p\leq 2n-2$$
and 
$$H_{p}(\Sp_{2n-2}) \stackrel{\cong}{\longrightarrow} H_{p}(\Sp_{2n})\hspace{4ex}\text{for}\hspace{2ex}p\leq 2n-2.$$
This proves the first string of isomorphisms and the second string of a surjection followed by isomorphisms in the theorem.
Using Lemma \ref{lem:tildtoIszero}, the surjectivity of 
$H_{2n-1}(\Sp_{2n-1}) \longrightarrow H_{2n-1}(\Sp_{2n})$
implies surjectivity of $H_{2n-1}(\Sp_{2n-2}) \longrightarrow H_{2n-1}(\Sp_{2n})$.
\end{proof}

Let $K^{MW}_*(R)$ be the Milnor-Witt $K$-theory ring of $R$ \cite[Definition 3.1]{Morel:book}, \cite[Definition 4.10]{myEuler}.
The following proves Theorem \ref{thm:OptStabilityIntro2} from the Introduction.

\begin{theorem}
\label{thm:ObstructionSurjection}
Let $R$ be a local ring with infinite residue field and $n\geq 1$ an integer.
Then the inclusions of groups $\Sp_{2r} \subset SL_{2r} \subset SL_{2r+1}$ induce a surjection
$$H_{2n}(\Sp_{2n}(R),\Sp_{2n-2}(R)) \twoheadrightarrow H_{2n}(SL_{2n}(R),SL_{2n-1}(R))=K^{MW}_{2n}(R).$$
\end{theorem}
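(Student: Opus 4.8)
The plan is to obtain the surjection by a soft argument, deducing it from the case $n=1$ of the identification in the statement together with the multiplicativity of the comparison isomorphisms $H_m(\SL_m(R),\SL_{m-1}(R))\cong K^{MW}_m(R)$ of \cite{myEuler}; no information about the symplectic groups beyond the existence of the block-diagonal embedding $\Sp_2(R)^{\times n}\subset\Sp_{2n}(R)$ will be needed.

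First I would record two elementary reductions. (a) Since $\SL_1(R)=\{1\}$, the case $n=1$ of the statement reads $H_2(\Sp_2(R))=H_2(\SL_2(R))\cong K^{MW}_2(R)$; in particular every $\rho\in K^{MW}_2(R)$ lifts to a class $\alpha_\rho\in H_2(\Sp_2(R))$. (b) The graded $\Z[R^*]$-algebra of the introduction is generated in degree $1$ by the symbols $[a]=\langle a\rangle-\langle 1\rangle$, and $\langle u\rangle[a]=[ua]-[u]$ is again an integer combination of such symbols; hence $K^{MW}_{2n}(R)$ is spanned as an abelian group by the products $[a_1]\cdots[a_{2n}]$, and pairing off consecutive factors shows that $K^{MW}_{2n}(R)$ is in fact generated by $n$-fold products $\rho_1\cdots\rho_n$ with $\rho_i\in K^{MW}_2(R)$.

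Next one observes that the map of the theorem is induced by the map of pairs $(\Sp_{2n}(R),\Sp_{2n-2}(R))\to(\SL_{2n}(R),\SL_{2n-1}(R))$: the standard embedding $\Sp_{2n-2}\subset\Sp_{2n}$, $A\mapsto\diag(1_2,A)$, fixes $e_1$, so it factors through $\SL_{2n-1}(R)$ compatibly with $\Sp_{2n-2}\subset\SL_{2n-2}\subset\SL_{2n-1}$. Since both the composite
\[
H_{2n}(\Sp_{2n}(R))\longrightarrow H_{2n}(\Sp_{2n}(R),\Sp_{2n-2}(R))\longrightarrow K^{MW}_{2n}(R)
\]
and the map $H_{2n}(\Sp_{2n}(R))\to H_{2n}(\SL_{2n}(R))\to H_{2n}(\SL_{2n}(R),\SL_{2n-1}(R))=K^{MW}_{2n}(R)$ are induced by the inclusion of pairs $(\Sp_{2n}(R),\emptyset)\hookrightarrow(\SL_{2n}(R),\SL_{2n-1}(R))$ (factored through $(\Sp_{2n},\Sp_{2n-2})$ in the first case and through $(\SL_{2n},\emptyset)$ in the second), they coincide. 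It therefore suffices to show that this latter composite is surjective. Given a generator $\rho_1\cdots\rho_n$ of $K^{MW}_{2n}(R)$ as in (b), I would choose $\alpha_i\in H_2(\Sp_2(R))=H_2(\SL_2(R))$ lifting $\rho_i$ as in (a), form the homology cross product $\alpha_1\times\cdots\times\alpha_n\in H_{2n}(\Sp_2(R)^{\times n})$, and push it forward along $\Sp_2(R)^{\times n}\subset\Sp_{2n}(R)$. Because $\Sp_2^{\times n}\subset\Sp_{2n}\subset\SL_{2n}$ factors through the block-diagonal $\SL_2^{\times n}\subset\SL_{2n}$, the image of this class in $K^{MW}_{2n}(R)$ equals the image of $\alpha_1\times\cdots\times\alpha_n$ under $\SL_2(R)^{\times n}\subset\SL_{2n}(R)$ followed by the projection to relative homology, and by the compatibility of the isomorphisms $H_m(\SL_m,\SL_{m-1})\cong K^{MW}_m$ with the block-sum products established in \cite{myEuler} this image is $\rho_1\cdots\rho_n$, up to multiplication by a unit of the degree-$0$ part $\Z[R^*]$ that is independent of the $\rho_i$ and hence harmless for surjectivity. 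Letting the $\rho_i$ vary and using (b), the composite is surjective, and the theorem follows.

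The only non-formal ingredient — and the step I expect to be the main obstacle to write down cleanly — is the last citation: that under $H_2(\SL_2)\cong K^{MW}_2$ and $H_{2n}(\SL_{2n},\SL_{2n-1})\cong K^{MW}_{2n}$ the pairing induced by the block-diagonal inclusions corresponds to the ring multiplication of $K^{MW}_*$. This is precisely how the graded-ring structure on $\bigoplus_m H_m(\SL_m(R),\SL_{m-1}(R))$ is constructed in \cite{myEuler}, so it can be invoked directly; everything else is bookkeeping with long exact sequences and the functoriality of group homology.
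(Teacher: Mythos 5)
Your proposal is correct and follows essentially the same route as the paper: both push the cross product $\alpha_1\times\cdots\times\alpha_n\in H_{2n}(\Sp_2(R)^{\times n})$ through the block-diagonal embedding into $\Sp_{2n}(R)$ and then into $H_{2n}(\SL_{2n}R,\SL_{2n-1}R)\cong K^{MW}_{2n}(R)$, and both reduce the matter to the fact (Theorem 5.37 of \cite{myEuler}) that this composite realises the multiplication map $K_2^{MW}(R)^{\otimes n}\to K_{2n}^{MW}(R)$, which is surjective since the Milnor--Witt ring is generated in degree one. The paper simply states the composite and cites the result directly, whereas you spell out the reduction to block-diagonal generators and note the needed compatibility; the content is the same.
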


\begin{proof}
Consider the string of maps 
{\small
$$H_2(\Sp_2(R))^{\otimes n} \to H_{2n}(\Sp_{2n}(R)) \to H_{2n}(\Sp_{2n}(R),\Sp_{2n-2}(R)) \to H_{2n}(SL_{2n}(R),SL_{2n-1}(R))$$
}

\noindent
in which the first map is induced by the block sum of matrices.
By \cite[Theorem 5.37 and proof]{myEuler}, the composition is the surjective multiplication map
$$K_2^{MW}(R)^{\otimes n} \twoheadrightarrow K_{2n}^{MW}(R).$$
It follows that the last map in the composition is surjective.
\end{proof}

\begin{remark}
\label{rmk:NotInjective}
Let $k$ be an infinite perfect field of characteristic not $2$ which is finitely generated over its prime field. 
Then neither of the two surjective maps
\begin{equation}
\label{eqn:NotInjective}
H_3(\Sp_2(k)) \twoheadrightarrow H_3(\Sp_4(k)),\hspace{2ex}\text{and }\hspace{2ex} H_4(\Sp_4(k),\Sp_2(k))\twoheadrightarrow K^{MW}_4(k)
\end{equation}
is injective.
For the first map, this follows from \cite[Theorem 7.4]{HutchinsonWendt} since that map factors through $H_3(B\Sp_2(k[\Delta^{\bullet}]))$ in view of the isomorphisms
$$H_3(B\Sp_4(k)) \cong H_3(B\Sp(k)) \cong H_3(B\Sp(k[\Delta^{\bullet}]))$$
resulting from Theorem \ref{thm:OptStabilityText} and homotopy invariance of symplectic $K$-theory for regular rings containing $1/2$.

If the second map in (\ref{eqn:NotInjective}) was an isomorphism, then the map
$$H_4(\Sp_4(k)) \to H_4(\Sp_4(k),\Sp_2(k))$$ 
would be surjective (see proof of Theorem \ref{thm:ObstructionSurjection}), and the long homology exact sequence for the pair $(\Sp_4(k),\Sp_2(k))$ would force the first map in (\ref{eqn:NotInjective}) to be injective.
\end{remark}

\bibliographystyle{plain}


\end{document}